\newcounter{todocounter}
\DeclareDocumentCommand\addreference{g}{\stepcounter{todocounter}\todo[color = blue!30]{\thetodocounter. Add reference\IfNoValueF{#1}{: #1}}\xspace}
\DeclareDocumentCommand\checkthis{g}{\stepcounter{todocounter}\todo[color = red!50]{\thetodocounter. Check this\IfNoValueF{#1}{: #1}}\xspace}
\DeclareDocumentCommand\fixthis{g}{\stepcounter{todocounter}\todo[color = orange!50]{\thetodocounter. Fix this\IfNoValueF{#1}{: #1}}\xspace}
\DeclareDocumentCommand\expand{g}{\stepcounter{todocounter}\todo[color = green!50]{\thetodocounter. Expand\IfNoValueF{#1}{: #1}}\xspace}
\declaretheoremstyle[spaceabove = 3pt, spacebelow = 3pt, bodyfont = \itshape]{theorem}
\declaretheoremstyle[spaceabove = 3pt, spacebelow = 3pt]{remark}
\declaretheorem[style=theorem]{theorem}
\declaretheorem[style=theorem, sibling=theorem]{corollary}
\declaretheorem[style=theorem, sibling=theorem]{lemma}
\declaretheorem[style=theorem, sibling=theorem]{proposition}
\declaretheorem[style=remark, sibling=theorem]{definition}
\declaretheorem[style=remark, sibling=theorem]{example}
\declaretheorem[style=remark, sibling=theorem]{remark}
\declaretheorem[style=theorem, numberwithin=section, title=Theorem]{alphatheorem}
\crefname{alphatheorem}{Theorem}{Theorems}
\crefname{alphaconjecture}{Conjecture}{Conjectures}
\crefname{alphacorollary}{Corollary}{Corollaries}
\crefname{alphaproposition}{Proposition}{Propositions}
\setlist[enumerate]{font=\normalfont}
\mathchardef\mhyphen="2D
\newcommand\order[1]{\mathscr{#1}}
\newcommand\stack[1]{\mathcal{#1}}
\newcommand\bounded{\ensuremath{\mathrm{b}}}
\newcommand\can{\ensuremath{\mathrm{can}}}
\newcommand\field{\mathbf{k}}
\newcommand\Gm{\ensuremath{\mathbb{G}_{\mathrm{m}}}}
\newcommand\rt{\ensuremath{\mathrm{root}}} 
\newcommand\sing{\ensuremath{\mathrm{sing}}}
\newcommand\smooth{\ensuremath{\mathrm{sm}}}
\DeclareMathOperator\Cl{C\ell}
\DeclareMathOperator\codiff{codiff}
\DeclareMathOperator\coh{coh}
\DeclareMathOperator\derived{\mathbf{D}}
\DeclareMathOperator\distinguished{D}
\DeclareMathOperator\End{End}
\DeclareMathOperator\Ext{Ext}
\DeclareMathOperator\fd{fd}
\DeclareMathOperator\Frac{Frac}
\DeclareMathOperator\gldim{gl\,dim}
\DeclareMathOperator\gr{gr}
\DeclareMathOperator\HH{H}
\DeclareMathOperator\Hom{Hom}
\DeclareMathOperator\id{id}
\DeclareMathOperator\intHom{\mathcal{H}\kern -.5pt om}
\DeclareMathOperator\Mat{Mat}
\DeclareMathOperator\mmod{mod}
\DeclareMathOperator\mult{mult}
\DeclareMathOperator\Proj{Proj}
\DeclareMathOperator\Qcoh{Qcoh}
\DeclareMathOperator\qgr{qgr}
\DeclareMathOperator\rad{rad}
\DeclareMathOperator\ramification{D}
\DeclareMathOperator\relSpec{\underline{Spec}}
\DeclareMathOperator\rk{rk}
\DeclareMathOperator\Spec{Spec}
\DeclareMathOperator\trd{trd}
\DeclareMathOperator\VV{\mathbb{V}}
\DeclareMathOperator\ZZ{Z}
\newsavebox{\pullback}
\sbox\pullback{%
\begin{tikzpicture}%
\draw (0,0) -- (1ex,0ex) -- (1ex,1ex);%
\end{tikzpicture}}
\newcommand\bbP{\mathbb{P}}
\newcommand\bbZ{\mathbb{Z}}
\newcommand\cM{\mathcal{M}}
\newcommand\cN{\mathcal{N}}
\newcommand\cO{\mathcal{O}}
\title{Central curves on noncommutative surfaces}
\author{Thilo Baumann, Pieter Belmans and Okke van Garderen}
\begin{document}

\maketitle

\begin{abstract}
  There exists a dictionary between
  hereditary orders and smooth stacky curves,
  resp.~tame orders of global dimension~2 and Azumaya algebras on smooth stacky surfaces.
  We extend this dictionary
  by explaining how the restriction of a tame order
  to a curve on the underlying surface
  corresponds to the fiber product of the curve with the stacky surface.
  By considering ``bad'' intersections
  we can start extending the dictionary in the 1-dimensional case
  to include non-hereditary orders
  and singular stacky curves.
  Two applications of these results are
  a novel description and classification of noncommutative conics
  in graded Clifford algebras,
  giving a geometric proof of results of Hu--Matsuno--Mori,
  and a complete understanding and classification of skew cubics,
  generalizing the work of Kanazawa for Fermat skew cubics.
\end{abstract}

\tableofcontents

\section{Introduction}
There are rich interactions between noncommutative algebraic geometry
and algebraic stacks.
First of all,
there exists a one-to-one correspondence between
\begin{enumerate}
  \item hereditary orders on smooth (and separated) curves;
  \item smooth (and separated) Deligne--Mumford stacks which are generically curves
\end{enumerate}
by \cite{MR2018958},
giving rise to an equivalence of categories between
the category of coherent sheaves of modules for a hereditary order
and the category of coherent sheaves on the associated stacky curve
(and vice versa).
The curve underlying the hereditary order
is the coarse moduli space of the Deligne--Mumford stack,
and the Deligne--Mumford stack admits
a bottom-up description in terms of root stacks \cite{MR3719470},
and thus we completely understand their categories of sheaves.

Recently, this correspondence was extended to a dictionary between
\begin{enumerate}
  \item tame orders of global dimension~2, and
  \item Azumaya algebras on certain smooth stacky surfaces.
\end{enumerate}
Again it takes the form of an equivalence of categories between
the category of coherent sheaves of modules for the tame order~$\order{A}$ on the surface~$S$
and the category of coherent sheaves of modules for an Azumaya algebra~$\order{B}$
on an associated stacky surface~$\stack{S}$ \cite{2206.13359}.
The surface~$S$ underlying the tame order
is the coarse moduli space of~$\stack{S}$.

\paragraph{Restricting the dictionary to central curves}
In this paper we explain what happens if we take a curve~$C$
on the underlying surface~$S$,
the \emph{central curve},
and~restrict the tame order~$\order{A}$ on~$S$
to a sheaf of algebras~$\order{A}|_C$ on~$C$.

On the other side of the dictionary,
we can take the fiber product
\begin{equation}
  \label{equation:fiber-product-introduction}
  \begin{tikzcd}
    \stack{C} \arrow[r, hook] \arrow[d] & \stack{S} \arrow[d] \\
    C \arrow[r, hook] & S
  \end{tikzcd}
\end{equation}
and restrict the Azumaya algebra~$\order{B}$ on~$\stack{S}$ to an Azumaya algebra on~$\stack{C}$.

Our first result is that this restriction is well-behaved
with respect to the equivalence from \cite{2206.13359}.
\begin{alphatheorem}
  \label{theorem:restriction-introduction}
  Let~$\order{A}$ be a tame order of global dimension~2
  on a smooth quasiprojective surface $S$,
  and let~$\order{B}$ be the Azumaya algebra on the stack~$\stack{S}$
  for which~$\coh(S,\order{A})\simeq\coh(\stack{S},\order{B})$.
  Given a curve~$C\subset S$ with~$\stack{C}$ the pullback to the stack as in \eqref{equation:fiber-product-introduction},
  there exists an equivalence
  \begin{equation}
    \coh(C,\order{A}|_C)
    \simeq
    \coh(\stack{C},\order{B}|_{\stack{C}}).
  \end{equation}
\end{alphatheorem}
The more precise version is given in \cref{theorem:restriction}.

We want a more precise understanding of the sheaf of algebras~$\order{A}|_C$.
Denoting~$\Delta\subset S$ the discriminant of~$\order{A}$,
we have the following.
\begin{alphatheorem}
  \label{theorem:restriction-properties-introduction}
  Let~$S,\mathcal{A},C$ be as in \cref{theorem:restriction-introduction},
  and assume moreover that~$C$ is integral.
  Then:
  \begin{enumerate}
    \item\label{enumerate:restriction-order}
      The sheaf of algebras~$\order{A}|_C$ is an order
      if and only if~$C$ is not contained in~$\Delta$.
  \end{enumerate}
  Assume that~$\order{A}|_C$ is an order. Then:
  \begin{enumerate}
      \setcounter{enumi}{1}
    \item\label{enumerate:restriction-azumaya}
      $\order{A}|_C$ is Azumaya
      if and only if~$C\cap\Delta=\emptyset$.
    \item\label{enumerate:restriction-hereditary}
      If~$C$ is smooth and intersects~$\Delta$ transversely in its smooth locus,
      then~$\order{A}|_C$ is hereditary.
  \end{enumerate}
\end{alphatheorem}
The proof, and some extensions of this result,
is given in \cref{subsection:properties-restriction}.

\paragraph{Application: noncommutative plane curves}
The restriction results in \cref{theorem:restriction-introduction,theorem:restriction-properties-introduction}
can be used to
provide new geometric proofs
and significant extensions
of several results in the literature,
obtained by vastly different methods,
for objects that can reasonably be called ``noncommutative plane curves''.

Namely, consider
a quadratic 3-dimensional Artin--Schelter regular algebra~$A$,
so that~$\qgr A$ is a noncommutative projective plane.
Let~$f\in\ZZ(A)_d$ be a central element of degree~$d$.
The noncommutative projective variety~$\qgr A/(f)$
can be considered as a noncommutative plane curve.
This construction is worked out for degree-2 elements in \cite{MR4531545,MR4575408},
and in a more specialised situation for degree-3 elements in \cite{MR3313507}.
The methods in op.~cit.~do not generalize to other settings.

However,
the Artin--Schelter regular algebras~$A$ in all of the cited papers
are in fact finite over their centers\footnote{
  Except for one exceptional case:
  by \cite[Theorem~3.6]{MR4531545},
  the only 3-dimensional Calabi--Yau Artin--Schelter regular algebra
  with a central element of degree~2 which is not a graded Clifford algebra or the commutative polynomial algebra,
  is~$\field\langle x,y,z\rangle/(yz-zy+x^2,zx-xz,xy-yx)$,
  the quantization of the Weyl algebra,
  which only has the central element~$x^2$.
  We will ignore this case.}.
Thus, to such an algebra~$A$
we can apply the central Proj construction of \cite{MR1356364} to obtain an order $\order{A}$ on an associated~$\mathbb{P}^2$.
The element~$f$ defines a central curve~$C\subset \mathbb{P}^2$ and there is an equivalence
\begin{equation}
  \qgr A/(f) \simeq \coh(C,\order{A}|_C).
\end{equation}
This allows us to apply the dictionary,
to give a unified treatment of the noncommutative conics and cubics,
which we describe below.
This approach moreover generalizes easily to other settings,
we will discuss two interesting phenomena in \cref{example:Clifford-degree-4,example:exotic-del-pezzo-order}.

Of course, it is classical that any quadratic Artin--Schelter regular algebra
admits a central element~$g\in\ZZ(A)_3$ of degree~3.
The noncommutative plane curve~$\qgr A/(g)$
is the plane cubic curve~$C$ appearing in the classification of Artin--Schelter regular algebras,
with~$A/(g)$ being the twisted homogeneous coordinate ring of~$C$.
Generically, the center of~$A$ is generated by~$g$,
meaning that no other interesting noncommutative plane curves exist.
Asking that~$A$ is finite over its center is therefore
a natural condition to ensure the existence of noncommutative plane curves.

\paragraph{Noncommutative conics in graded Clifford algebras}
We will revisit the degree-2 case,
studied and classified in \cite{MR4531545,MR4575408},
from the perspective of central curves.
The ambient Artin--Schelter regular algebras we will consider
are graded Clifford algebras,
see \cref{subsection:graded-clifford-algebras} for more details,
and these admit a 3-dimensional linear system of noncommutative conics.
A central element~$f\in\ZZ(A)_2$
defines a plane curve of degree~1
in the associated~$\mathbb{P}^2$
coming from the central Proj construction.

\begin{alphatheorem}
  \label{theorem:classification-of-clifford-conics}
  There are~6 isomorphism classes of noncommutative conics
  within 3-dimensional graded Clifford algebras.
  Their algebraic resp.~stacky properties
  are summarized in \cref{table:local-properties-clifford-conic-orders,table:properties-stacky-clifford-conics}.
\end{alphatheorem}

We refer to \cref{theorem:clifford-conic-orders-classification,theorem:clifford-conic-stacks,theorem:classification}
for the precise version of this theorem.

\Cref{table:local-properties-clifford-conic-orders,table:properties-stacky-clifford-conics}
suggest how to extend the dictionary for hereditary orders and root stacks
beyond the smooth case.
For~5 out of~6 isomorphism classes in \cref{theorem:classification-of-clifford-conics},
the abelian category~$\qgr A/(f)$ has infinite global dimension,
so that the sheaf of algebras~$\order{A}|_{\mathbb{P}^1}$ is no longer a hereditary order,
resp.~the Deligne--Mumford stack~$\stack{C}$ from \cref{theorem:restriction-introduction}
is no longer smooth.

E.g.,
the property of being a \emph{tiled} order
is seen to correspond to being a \emph{root stack},
possibly in a non-reduced divisor,
at least in the setting of noncommutative conics.
Understanding the precise shape of the dictionary,
relating order-theoretic properties
to stacky properties,
is beyond the scope of this article,
and left for future work.

\paragraph{Noncommutative (Fermat) cubics in skew polynomial algebras}
For a second application
we will consider the noncommutative Fermat cubics in 3-dimensional skew polynomial algebras
studied by Kanazawa~\cite{MR3313507}.
Op.~cit.~considers the skew polynomial algebra~$A=\field_q[x,y,z]$, where
\begin{equation}
  q=\begin{pmatrix}
    1 & q_{1,2} & q_{1,3} \\
    q_{1,2}^{-1} & 1 & q_{2,3} \\
    q_{1,3}^{-1} & q_{2,3}^{-1} & 1
  \end{pmatrix}
\end{equation}
and the~$q_{i,j}$ are cube roots of unity;
with central element~$f=x^3+y^3+z^3$ the Fermat cubic,
and~$B\colonequals A/(f)$ the homogeneous coordinate ring of the noncommutative plane curve.
Then \cite[Theorem~1.1]{MR3313507} (for~$n=3$) states that
the category~$\qgr B$ is 1-Calabi--Yau if and only if
\begin{equation}
  \label{equation:kanazawa-independence}
  q_{1,2}^{-1}q_{1,3}^{-1}=q_{1,2}q_{2,3}^{-1}=q_{1,3}q_{2,3}.
\end{equation}
We will re-examine this case in \cref{subsection:kanazawa-is-fermat}
using our machinery,
and explain how we actually obtain an equivalence~$\qgr B\simeq\coh E$
for the Fermat elliptic curve~$E$,
which is not mentioned explicitly in \cite{MR3313507}.

The benefit of our approach is that our methods in fact work for:
\begin{enumerate}
  \item \emph{every} element in the 3-dimensional linear system (referred to as net) of central cubics~$\langle x^3,y^3,z^3\rangle$;
  \item the case where \eqref{equation:kanazawa-independence} \emph{does not hold}.
\end{enumerate}
In the latter case,
by \cite[Theorem~1.1]{MR3313507}~$\qgr B$ is \emph{not} 1-Calabi--Yau.
However, op.~cit.~shows that it has global dimension~1 for the Fermat cubic,
and one can moreover deduce from \cite[Proposition~2.4]{MR3313507}
that its Serre functor is~3-torsion.
In \cref{subsection:non-kanazawa-is-tubular}
we explain
how~$\qgr B$ is the (up to isomorphism unique) tubular weighted projective line of type~$(3,3,3)$,
which is indeed fractional Calabi--Yau of dimension~$3/3$,
and we extend the result to any central cubic in the linear system~$\langle x^3,y^3,z^3\rangle$.

We refer to \cref{proposition:kanazawa-fermat-cubic-1,proposition:kanazawa-fermat-cubic-2}
for the precise statements in case~\eqref{equation:kanazawa-independence} holds
(resp.~does not hold).

\paragraph{Notation and conventions}
Throughout we work over an algebraically closed field $\field$ of characteristic $0$.

\paragraph{Acknowledgements}
T.B.~was partially supported by the Luxembourg National Research Fund (PRIDE R-AGR-3528-16-Z).
P.B.~was partially supported by the Luxembourg National Research Fund (FNR--17113194).

\section{Dictionaries between orders and stacks}
\label{section:dictionaries}
In this section we survey the dictionary between orders and stacks.
For hereditary orders on curves this is done in \cref{subsection:dictionary-dim-1},
for tame orders on surfaces this is done in \cref{subsection:dictionary-dim-2}.
We also recall some of the details of the proofs in \cite{2206.13359}
as they are necessary for the proofs in this paper.
To highlight the role that generalized Rees algebras (implicitly) play in \cite{2206.13359},
we start by recalling their definition in \cref{subsection:generalized-rees-algebras},
and use them to give an alternative proof of the dictionary in dimension~1.

\subsection{Generalized Rees algebras}
\label{subsection:generalized-rees-algebras}
Generalized Rees algebras are a construction due to Van Oystaeyen \cite{MR738218}
and studied by Reiten--Van den Bergh \cite{MR0978602},
associating a type of noncommutative Rees algebra to an order;
which can be reinterpreted as a local model for the root stack.
In the rest of the subsection~$R$ denotes a noetherian integrally closed domain containing the field~$\field$
and~$\Lambda$ is a tame $R$-order:
an order for which the localisation~$\Lambda_{\mathfrak{p}}$ is hereditary for each prime~$\mathfrak{p}\triangleleft R$ of height one.

Recall that a \emph{fractional ideal} of~$\Lambda$ is
a~$\Lambda$-submodule~$I\subset \Frac(R)\Lambda$
which is reflexive as an~$R$-module and satisfies~$\Frac(R)I = \Frac(R)\Lambda$,
and that~$I$ is called \emph{divisorial} if~$I_{\mathfrak{p}}$ is invertible
for every prime~$\mathfrak{p} \in \Spec R$ of height~$1$.
The operation~$I\cdot J = (I\otimes J)^{**}$ with~$(-)^{*}$ denoting the~$R$-dual
endows the set of divisorial ideals with the structure of a group.
In particular, for any~$n\in\mathbb{Z}$ there is a well-defined symbolic power~$I^{(n)}$
given by the~$n$th power under this operation.

Reiten--Van den Bergh \cite[Chapter 5]{MR0978602} define the ramification ideal
of the order~$\Lambda$ over~$R$ as follows.
For each height-one prime~$\mathfrak{p}\triangleleft R$
the submodule~$\rad \Lambda_{\mathfrak{p}} \cap \Lambda \subset \Lambda$
is a divisorial ideal of~$\Lambda$ which contains~$\mathfrak{p}\Lambda$.
There exist a finite number of such primes~$\mathfrak{p}_1,\ldots,\mathfrak{p}_n \triangleleft R$
for which this divisorial ideal is not equal to~$\mathfrak{p} \Lambda$,
which we will denote
\begin{equation}
  \label{equation:divisorial-ideal-factors}
  P_i \colonequals \rad \Lambda_{\mathfrak{p}_i} \cap \Lambda \subset \Lambda.
\end{equation}
For each of these
there exists a minimal integer~$e_i\geq 2$
such that~$P_i^{(e_i)} \cong (\mathfrak{p}_i \Lambda)^{(1)}$,
called the \emph{ramification index} of~$\Lambda$ over~$\mathfrak{p}_i$.
\begin{definition}
  \label{definition:ramification-ideal}
  The ramification ideal of~$\Lambda$ is the product
  \begin{equation}
    \label{equation:ramification-ideal}
    \ramification(\Lambda/R) = P_1^{(e_1 -1)}\cdots P_n^{(e_n-1)}.
  \end{equation}
\end{definition}
We obtain the following construction.
\begin{definition}
  The ramification ideal defines the \emph{ramification Rees algebra}
  \begin{equation}
    \widetilde \Lambda \colonequals \bigoplus_{n\in\mathbb{Z}} \ramification(\Lambda/\kern-1pt R)^{(n)} T^n,
  \end{equation}
\end{definition}

By \cite[Theorem II.4.38]{MR1003605}
this is a maximal~$\mathbb{Z}$-graded order over its center,
and it moreover follows by the proof of~\cite[Proposition 5.1(c)]{MR0978602}
that the order $\widetilde \Lambda$ is \emph{reflexive Azumaya},
i.e.,
it is not just a hereditary order at codimension-one points,
but in fact Azumaya.
An explicit description of the center is also given in loc. cit.
as the so-called \emph{scaled Rees ring}
\begin{equation}
  \widetilde R(\mathfrak{p}_1,\ldots,\mathfrak{p}_n; e_1,\ldots, e_n) \colonequals \bigoplus_{k\in\mathbb{Z}} \mathfrak{p}_1^{\lceil k(e_1-1)/e_1 \rceil}\cdots\mathfrak{p}_n^{\lceil k(e_n-1)/e_n \rceil} T^k.
\end{equation}
Because \cite[Theorem II.4.38]{MR1003605} does not give a complete proof of this fact,
we include one for completeness under the assumption that each prime $\mathfrak{p_i}$ is a principal ideal.

\begin{lemma}
  \label{lemma:center-as-scaled-rees-ring}
  Let $\Lambda$ be a tame order over a noetherian integrally closed domain $R$,
  ramified over primes~$\mathfrak{p}_1,\ldots,\mathfrak{p}_n$.
  If each $\mathfrak{p}_i$ is principal, then $\ZZ(\widetilde\Lambda) = \widetilde R(\mathfrak{p}_1,\ldots,\mathfrak{p}_n; e_1,\ldots, e_n)$.
\end{lemma}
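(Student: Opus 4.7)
The plan is to exploit the $\mathbb{Z}$-grading on $\widetilde\Lambda$: the centre is automatically graded, so it suffices to identify the degree-$k$ component
\begin{equation*}
  \ZZ(\widetilde\Lambda)_k \subseteq \ramification(\Lambda/R)^{(k)}T^k
\end{equation*}
for every $k\in\mathbb{Z}$. Inside the ambient ring $\Frac(R)\Lambda[T,T^{-1}]$ the element $T$ is central, so $aT^k$ is central in $\widetilde\Lambda$ if and only if $a$ commutes with every element of $\Lambda$. Since $\Lambda$ spans the central simple algebra $\Frac(R)\Lambda$ over $\Frac(R)$, its centralizer there equals $\Frac(R)$, and the problem reduces to computing $\Frac(R)\cap\ramification(\Lambda/R)^{(k)}$.

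Next I would rewrite, using the abelian group structure on divisorial ideals,
\begin{equation*}
  \ramification(\Lambda/R)^{(k)} = P_1^{((e_1-1)k)}\cdots P_n^{((e_n-1)k)},
\end{equation*}
and translate membership into local conditions at each height-one prime. For a height-one prime $\mathfrak{q}\notin\{\mathfrak{p}_1,\ldots,\mathfrak{p}_n\}$, each $P_i$ localizes to all of $\Lambda_{\mathfrak{q}}$, so the only constraint is $a\in\Lambda_{\mathfrak{q}}\cap\Frac(R)=R_{\mathfrak{q}}$; intersecting over such $\mathfrak{q}$ together with $R=\bigcap_{\mathfrak{q}}R_{\mathfrak{q}}$ forces $a\in R$. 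At a ramified prime $\mathfrak{p}_i$ with uniformizer $\pi_i$ (which exists by the principality hypothesis), I would use that $P_i\Lambda_{\mathfrak{p}_i}=\rad\Lambda_{\mathfrak{p}_i}$ is an invertible two-sided ideal of the hereditary local order $\Lambda_{\mathfrak{p}_i}$ satisfying $(\rad\Lambda_{\mathfrak{p}_i})^{e_i}=\pi_i\Lambda_{\mathfrak{p}_i}$; a short valuation computation then shows that $a\in\Frac(R)$ lies in $P_i^{(m)}$ if and only if $v_{\mathfrak{p}_i}(a)\geq\lceil m/e_i\rceil$. Taking $m=(e_i-1)k$ produces precisely the exponent appearing in the scaled Rees ring.

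Finally, the principality of each $\mathfrak{p}_i$ is what allows this system of valuation inequalities to be re-expressed as an ordinary ideal product
\begin{equation*}
  \mathfrak{p}_1^{\lceil k(e_1-1)/e_1\rceil}\cdots\mathfrak{p}_n^{\lceil k(e_n-1)/e_n\rceil}
\end{equation*}
inside $R$, which by definition is the degree-$k$ component of $\widetilde R(\mathfrak{p}_1,\ldots,\mathfrak{p}_n;e_1,\ldots,e_n)$. Summing over $k$ finishes the proof. I expect the only delicate point to be keeping careful track of the distinction between symbolic and ordinary powers: one needs both the normalisation $P_i^{(e_i)}\cong(\mathfrak{p}_i\Lambda)^{(1)}$ that fixes the ramification index and the principality of $\mathfrak{p}_i$ in order to turn symbolic powers of the $P_i$ into ordinary powers of the $\mathfrak{p}_i$ after intersecting with $\Frac(R)$.
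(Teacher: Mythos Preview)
Your proposal is correct and follows essentially the same strategy as the paper: reduce to the graded pieces, identify the central elements of degree $k$ as the intersection of $\ramification(\Lambda/R)^{(k)}$ with the base (you use $\Frac(R)$, the paper uses $R$ in positive degrees and remarks on localizing for negative ones), localize at height-one primes, and at each ramified prime extract the valuation bound $v_{\mathfrak{p}_i}(a)\geq\lceil k(e_i-1)/e_i\rceil$ from the relation $(\rad\Lambda_{\mathfrak{p}_i})^{e_i}=\mathfrak{p}_i\Lambda_{\mathfrak{p}_i}$, then invoke principality to turn the intersection of prime-power conditions into a product. The paper spells out the local step via the Euclidean decomposition $k(e_i-1)=k_ie_i+r_i$ with $0<r_i\leq e_i$ and the observation that $R_{\mathfrak{p}_i}\cap(P_i)_{\mathfrak{p}_i}^{r_i}$ is a proper ideal containing $\mathfrak{p}_iR_{\mathfrak{p}_i}$, hence equal to it; this is exactly the ``short valuation computation'' you allude to.
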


\begin{proof}
  We note that the center~$\ZZ(\widetilde \Lambda)$ is again graded, and its graded summands can be obtained by intersecting the graded summands of~$\widetilde \Lambda$ with~$R$ (in positive degrees) or with a suitable localisation of~$R$ in negative degrees; we consider only positive degrees for simplicity.

  For any~$k\geq 1$ the localisation of~$\ramification(\Lambda/\kern-1pt R)^{(k)}$ at a prime~$\mathfrak{p}$ is given by~$\ramification(\Lambda/\kern-1pt R)^{k}_{\mathfrak{p}} = (P_i)_{\mathfrak{p}_i}^{k(e_i-1)}$ if~$\mathfrak{p} = \mathfrak{p}_i$ for some~$i$ or~$\ramification(\Lambda/\kern-1pt R)^{k}_{\mathfrak{p}} = \Lambda_{\mathfrak{p}}$ otherwise.
  Now since~$\ramification(\Lambda/\kern-1pt R)$ is reflexive, it follows that
  \begin{equation}
    R \cap \ramification(\Lambda/\kern-1pt R)^{(k)} = R \cap \bigcap_{\mathfrak{p} \text{ height } 1} \ramification(\Lambda/\kern-1pt R)^{k}_{\mathfrak{p}} = R \cap \bigcap_{i=1}^n (R_{\mathfrak{p}_i} \cap (P_i)_{\mathfrak{p}_i}^{k(e_i-1)}).
  \end{equation}
  Letting~$k_i,r_i \in \mathbb{N}$ be integers such that~$k(e_i-1) = k_i e_i + r_i$ and~$0< r_i \leq e_i$.
  It follows from the definition of the ramification index that~$P_i^{(k(e_i-1))} = \mathfrak{p}_i^{k_i}P_i^{(r_i)}$, and so in particular
  \begin{equation}
    R_{\mathfrak{p}_i} \cap (P_i)_{\mathfrak{p}_i}^{k(e_i-1)} = \mathfrak{p}_i^{k_i}(R_{\mathfrak{p}_i} \cap (P_i)_{\mathfrak{p}_i}^{r_i}).
  \end{equation}
  Now~$R_{\mathfrak{p}_i} \cap (P_i)_{\mathfrak{p}_i}^{r_i}$ is a proper ideal of~$R_{\mathfrak{p}_i}$ which includes the maximal ideal~$R_{\mathfrak{p}_i} \cap (P_i)_{\mathfrak{p}_i}^{e_i} = \mathfrak{p}_iR_{\mathfrak{p}_i}$, hence must be equal to it.
  It follows that~$R_{\mathfrak{p}_i} \cap (P_i)_{\mathfrak{p}_i}^{k(e_i-1)} = \mathfrak{p}_i^{k_i+1}R_{\mathfrak{p}_i}$,
  and therefore
  \begin{equation}
    R \cap \ramification(\Lambda/\kern-1pt R)^{(k)}
    = R \cap \bigcap_{i=1}^n  (\mathfrak{p}_iR_{\mathfrak{p}_i})^{\lceil k(e_i-1)/e_i \rceil}
    = \mathfrak{p}_1^{\lceil k(e_1-1)/e_1 \rceil} \cap \cdots \cap \mathfrak{p}_n^{\lceil k(e_n-1)/e_n \rceil},
  \end{equation}
  where we note that $k_i + 1 = \lceil k(e_i-1)/e_i\rceil$.
  By assumption~$\mathfrak{p}_i = (a_i)$ for distinct prime elements~$a_i \in R$.
  Hence the intersection is of the claimed form
  \begin{equation*}
    \mathfrak{p}_1^{\lceil k(e_1-1)/e_1 \rceil} \cap \cdots \cap \mathfrak{p}_n^{\lceil k(e_n-1)/e_n \rceil}
    =
    (a_1^{\lceil k(e_1-1)/e_1 \rceil} \cdots a_n^{\lceil k(e_n-1)/e_n \rceil})
    =
    \mathfrak{p}_1^{\lceil k(e_1-1)/e_1 \rceil}\cdots \mathfrak{p}_n^{\lceil k(e_n-1)/e_n \rceil}.\qedhere
  \end{equation*}
\end{proof}

In what follows we will write the scaled Rees ring simply as $\widetilde R$, leaving the data implicit.

It is often convenient to work with a module-finite version of the Rees algebra,
which is constructed as follows.
Under the assumption that $\mathfrak{p}_i = (a_i)$ for some $a_i\in R$, we can take~$e = \text{lcm}({e_1,\ldots,e_n})$ to find
\begin{equation}
  \label{equation:symbolic-power-ramification}
  \ramification(\Lambda/\kern-1pt R)^{(e)} = P_1^{(e(e_1-1))}\cdots P_n^{(e(e_n-1))} = \Lambda a \Lambda,
\end{equation}
where~$a = a_1^{e(e_1-1)/e_1}\cdots a_n^{e(e_n-1)/e_n} \in R$ is central.
We then consider the~$\mathbb{Z}/e\mathbb{Z}$-graded algebra
\begin{equation}
  \label{equation:ZeZ-version}
  \Lambda_e
  \colonequals \frac{\widetilde \Lambda}{(1-aT^e)}
  = \Lambda \oplus \ramification(\Lambda/\kern-1pt R)T \oplus \cdots \oplus \ramification(\Lambda/\kern-1pt R)^{(e-1)}T^{e-1}.
\end{equation}
It follows by \cite[Proposition 5.1]{MR0978602} that~$\Lambda_e$
is a tame order over its center,
and it is again reflexive Azumaya.
Using \cref{lemma:center-as-scaled-rees-ring} the center~$R_e \colonequals \widetilde R/(1-aT^e)$, can be described explicitly as follows.

\begin{corollary}
  \label{corollary:root-stack-local}
  With notation as above, the center of~$\Lambda_e$ is given by
  \begin{equation}
    R_e = \frac{\widetilde R}{(1-aT^e)} \cong \frac{R[(a_1\cdots a_nT)^{\pm}]}{(1-aT^e)} \cong \frac{R[t]}{(t^e - a_1^{e/e_1}\cdots a_n^{e/e_n})}.
  \end{equation}
\end{corollary}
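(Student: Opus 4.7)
The plan is to compute $R_e := \widetilde R/(1-aT^e)$ via a well-chosen generator, using \cref{lemma:center-as-scaled-rees-ring} to set up the identification and then leveraging the centrality of $1-aT^e$ and the choice $e=\mathrm{lcm}(e_i)$. First, the equality $R_e = \widetilde R/(1-aT^e)$ is immediate: by \cref{lemma:center-as-scaled-rees-ring} we have $\widetilde R = \ZZ(\widetilde\Lambda)$, and since $a\in R$ and $T$ commutes with $R$ the element $1-aT^e$ is central in $\widetilde\Lambda$, so the center of $\Lambda_e=\widetilde\Lambda/(1-aT^e)$ is the quotient of centers $\widetilde R/(1-aT^e)$.

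The key step is to introduce the distinguished element $s := a_1\cdots a_nT$, which lies in the degree-$1$ piece of $\widetilde R$ (since $a_1\cdots a_n$ generates $I_1 = \prod_i \mathfrak{p}_i$ under the assumption that each $\mathfrak{p}_i$ is principal). The identity $e = e/e_i + e(e_i-1)/e_i$ applied factor-by-factor yields $(a_1\cdots a_n)^e = (\prod_i a_i^{e/e_i})\cdot a$, and hence
\begin{equation*}
s^e = (a_1\cdots a_n)^e T^e = \Bigl(\prod_i a_i^{e/e_i}\Bigr)\cdot a T^e \equiv \prod_i a_i^{e/e_i} \pmod{1-aT^e}.
\end{equation*}
This produces a well-defined $R$-algebra homomorphism $\varphi \colon R[t]/(t^e-\prod_i a_i^{e/e_i}) \to R_e$, $t\mapsto s$. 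The intermediate expression $R[(a_1\cdots a_nT)^{\pm}]/(1-aT^e)$ serves as a bridge: it is the Laurent-type $R$-subalgebra of the graded fraction field of $\widetilde R$ generated by $s$ together with a formal inverse, and imposing $1-aT^e = 0$ is equivalent via the above computation to imposing $s^e = \prod_i a_i^{e/e_i}$, giving $R[t]/(t^e - \prod_i a_i^{e/e_i})$.

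It remains to show $\varphi$ is an isomorphism. Both the source and target are finite $R$-algebras free of rank $e$ over $R$ (the target since $R_e = \bigoplus_{k=0}^{e-1} I_k T^k$ and each $I_k$ is principal because $R$ is a domain with principal ramification primes). I would establish the bijectivity by a local argument on $\Spec R$: at a prime not containing any $\mathfrak{p}_i$ every $a_j$ is a unit and the statement is transparent, while at a height-one prime $\mathfrak{p}_i$ the other $\mathfrak{p}_j$ become units, reducing the situation to the case $n=1$ where $\widetilde R_{\mathfrak{p}_i}$ is locally generated over $R_{\mathfrak{p}_i}$ by $a_iT$ and a direct inspection of each graded piece gives the isomorphism.

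The main obstacle is verifying surjectivity carefully, i.e.\ showing that the relation $1-aT^e$ reduces every graded piece of $\widetilde R$ to an $R$-linear combination of powers of $s$; this relies crucially on the bookkeeping identity $I_{k+e} = a\cdot I_k$ (which holds because $e=\mathrm{lcm}(e_i)$ and each $\mathfrak{p}_i$ is principal), ensuring that the multiplication-by-$a$ maps identify the graded pieces in positive and negative degrees cleanly and allow every element to be represented in the range $0\leq k < e$.
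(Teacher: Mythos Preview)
Your argument for the first equality is too quick: ``center of a quotient equals quotient of the center'' is not automatic, and centrality of $1-aT^e$ alone does not justify it. The paper sidesteps this by defining $R_e := \widetilde R/(1-aT^e)$ and citing \cite[Proposition~5.1]{MR0978602} for the fact that $\Lambda_e$ is a (reflexive Azumaya) order over $R_e$; you should invoke the Azumaya property of $\widetilde\Lambda$ explicitly if you want to deduce it directly.

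The serious gap is in your surjectivity argument. After localising at $\mathfrak{p}_i$ the other $a_j$ become units, but the period imposed by the relation $1-aT^e$ is still $e=\operatorname{lcm}(e_j)$, not $e_i$. The degree-$k$ piece of $(\widetilde R)_{\mathfrak{p}_i}$ is $(a_i)^{\lceil k(e_i-1)/e_i\rceil}T^k$, and for $e_i\le k<e$ the exponent $\lceil k(e_i-1)/e_i\rceil=k-\lfloor k/e_i\rfloor$ is strictly less than $k$. Hence $(a_iT)^k$ only generates the proper submodule $(a_i)^kT^k\subsetneq I_kT^k$, and your map $\varphi$ is \emph{not} surjective there. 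Your claim that ``$\widetilde R_{\mathfrak{p}_i}$ is locally generated over $R_{\mathfrak{p}_i}$ by $a_iT$'' is false once $e>e_i$.

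In fact this reflects a genuine issue with the statement: when the $e_i$ are not all equal, the ring $R[t]/(t^e-\prod_i a_i^{e/e_i})$ need not even be isomorphic to $R_e$. For instance with $n=2$, $e_1=2$, $e_2=3$, $e=6$ over $R=\field[a_1,a_2]$, one checks that $(R_e)_{(a_1)}$ is a product of DVRs (an \'etale cubic extension followed by a ramified quadratic one), whereas $R_{(a_1)}[t]/(t^6-a_1^3a_2^2)$ is a non-normal domain: the element $t^2/a_1$ satisfies $(t^2/a_1)^3=a_2^2$ and so is integral, yet does not lie in the ring. The isomorphism of the corollary is correct precisely when all $e_i$ coincide (so $e=e_i$ and $\lceil k(e-1)/e\rceil=k$ for $0\le k<e$), which is the only case used in the paper's applications. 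Your proof works cleanly in that regime; outside it, no choice of generator will repair the argument.
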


Geometrically~$\Spec R_e \to \Spec R$ is an~$e$-fold covering ramified in~$\{a_i = 0\}$ with ramification index~$e_i$.

\subsection{Hereditary orders and stacky curves}
\label{subsection:dictionary-dim-1}
The structure of hereditary orders on smooth curves is well-understood,
since they have an \'etale-local normal form described as follows.
Let~$C$ be a smooth curve and~$\order{A}$ a hereditary order on~$C$.
Given a closed point~$p$,
let~$R$ denote the henselisation of~$\mathcal{O}_{C,p}$
and~$\Lambda\colonequals \order{A}_p\otimes_C R$ the henselisation of the stalk.
Then \cite[Theorem 39.14]{MR0393100} shows that~$\Lambda$
is isomorphic to an order of the form
\begin{equation}
  \label{equation:normal-form-hereditary-order}
  \Lambda \cong \begin{pmatrix}
    R & R & \ldots & R \\
    \mathfrak{m} & R & \ldots & R \\
    \vdots & & \ddots & \vdots \\
    \mathfrak{m} & \mathfrak{m} & \ldots & R
  \end{pmatrix}^{(n_1,\ldots,n_r)}\quad \subset \Mat_n(R),
\end{equation}
where the superscript~$(n_1,\ldots,n_r)$ refers to a block decomposition of the indicated sizes,
as in \cite[Definition~39.2]{MR0393100}.
The number~$r$ is called the \emph{ramification index} of~$\Lambda$ at~$p$.
Every hereditary order is ramified at a finite number of points,
and the ramification indices of these points determine the order up to Morita equivalence.

Another object which is determined uniquely by such ramification data are \emph{smooth} stacky curves,
by the bottom-up characterization of \cite{MR3719470}.

\begin{definition}
  A \emph{stacky curve} is a separated Deligne--Mumford stack of dimension 1 with trivial generic stabilizer.
\end{definition}

In \cite[Corollary~7.8]{MR2018958} a dictionary is given which relates hereditary orders on stacky curves.

\begin{theorem}[Chan--Ingalls]
  \label{theorem:chan-ingalls}
  Let~$\order{A}$ be a hereditary order on a smooth curve~$C$.
  Then there exists
  a unique smooth stacky curve~$\stack{C}$
  whose coarse moduli space is~$C$,
  together with an equivalence
  \begin{equation}
    \label{equation:chan-ingalls-equivalence}
    \coh\stack{C}\simeq \coh(C,\order{A}).
  \end{equation}
  Conversely,
  for every smooth stacky curve~$\stack{C}$ there exists a hereditary order~$\order{A}$
  on the coarse moduli space~$C$,
  unique up to Morita equivalence,
  together with an equivalence \eqref{equation:chan-ingalls-equivalence}.
\end{theorem}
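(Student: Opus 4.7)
The plan is to use the generalized Rees algebra machinery of \cref{subsection:generalized-rees-algebras} to give a pointwise local model for the correspondence, and then glue. Since both sides admit descent in the \'etale topology, it suffices to produce a compatible equivalence at the henselization of each closed point $p \in C$.

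By the normal form \eqref{equation:normal-form-hereditary-order}, the henselization $\Lambda$ of $\order{A}_p$ is controlled up to Morita equivalence by a single integer, the ramification index $e = e_p$, and $e_p = 1$ for all but finitely many $p$. At an unramified point $\Lambda \cong \Mat_n(R)$ and classical Morita theory immediately yields the desired equivalence with $\coh R$, so the content of the theorem is concentrated at the ramified points.

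At such a point, I would form the module-finite Rees algebra $\Lambda_e$ from \eqref{equation:ZeZ-version}. By \cref{corollary:root-stack-local} its center is $R_e \cong R[t]/(t^e - a)$ with $\mathfrak{m} = (a)$, which is exactly the henselian local model of the $e$th root stack of $p$ in $C$ presented as the quotient $[\Spec R_e/\mu_e]$. Since $\Lambda_e$ is reflexive Azumaya over a $1$-dimensional regular ring it is honestly Azumaya, and its $\mathbb{Z}/e\mathbb{Z}$-grading makes it $\mu_e$-equivariant. Equivariant Azumaya Morita descent then identifies coherent sheaves on $[\Spec R_e/\mu_e]$ with $\mathbb{Z}/e\mathbb{Z}$-graded coherent $\Lambda_e$-modules.

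The main obstacle is the final step: showing that the degree-zero functor $M \mapsto M_0$ is an equivalence between $\mathbb{Z}/e\mathbb{Z}$-graded coherent $\Lambda_e$-modules and coherent $\Lambda$-modules. This is a Morita-type statement for strongly graded rings, and the essential use of the hereditary hypothesis is here: the symbolic powers $\ramification(\Lambda/R)^{(i)}$ are locally invertible as $\Lambda$-bimodules, so the multiplication maps $\ramification(\Lambda/R)^{(i)} \otimes_\Lambda \ramification(\Lambda/R)^{(-i)} \to \Lambda$ are isomorphisms and a quasi-inverse to $M\mapsto M_0$ is given by $N \mapsto \Lambda_e \otimes_\Lambda N$. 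Concatenating the local equivalences and gluing produces $\coh(C, \order{A}) \simeq \coh \stack{C}$ with $\stack{C}$ the root stack of $C$ at the ramified points with prescribed indices $e_p$. Uniqueness of $\stack{C}$ and the converse direction follow from the bottom-up classification of smooth stacky curves \cite{MR3719470}, since both sides of the correspondence are determined by the same inertia data along the coarse space.
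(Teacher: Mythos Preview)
Your approach via generalized Rees algebras is essentially the same strategy the paper uses in its alternative construction following the theorem, but the paper organizes it globally rather than pointwise, and this difference matters precisely at the step you describe as ``concatenating the local equivalences and gluing.''

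The paper builds the graded algebra once, as the global sheaf $\widetilde{\order{A}} = \bigoplus_n \omega_{\order{A}}^{\otimes -n}$ using the canonical bimodule, and sets $\stack{C}_\rt = [\relSpec_C \ZZ(\widetilde{\order{A}})/\Gm]$. It then writes down a global adjoint pair between $\coh(C,\order{A})$ and $\coh^{\mathbb{Z}}(\widetilde C,\widetilde{\order{A}}) \simeq \coh(\stack{C}_\rt,\order{A}_\rt)$, checks \'etale-locally that unit and counit are isomorphisms (this is where the normal form and the element $\zeta$ with $\zeta^r = u$ enter), and only at the very end invokes Tsen on the global smooth stacky curve to split the Azumaya $\order{A}_\rt$.

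Your version instead splits the Azumaya $\Lambda_e$ separately over each henselization and passes to $\coh[\Spec R_e/\mu_e]$ by Morita. That splitting, and hence the resulting equivalence with coherent sheaves on the local root stack, depends on a choice of progenerator which is not canonical; you have not said why these pointwise equivalences agree on overlaps, so the gluing step is a genuine gap as written. The clean fix is exactly what the paper does: keep the Azumaya in the picture until you have a global equivalence $\coh(C,\order{A})\simeq\coh(\stack{C},\order{B})$ for a global $\order{B}$, and apply Tsen once to $\order{B}$ on $\stack{C}$. Alternatively, replace your henselian-local $\Lambda_e$ by the globally defined $\widetilde{\order{A}}$, so that the functors $(-)_0$ and $-\otimes_{\order{A}}\widetilde{\order{A}}$ are global from the outset and only the \emph{verification} that they are inverse is local.
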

The stack~$\stack{C}$ is constructed in \cite[Theorem~7.7]{MR2018958} using an iterative procedure.

For the benefit of the reader we give a more direct construction of this dictionary using the generalized Rees algebras.
This is the 1-dimensional version of the construction in \cite{2206.13359},
and it highlights the implicit role of root stacks in \cite{MR2018958},
which predates their introduction in \cite{MR2306040}.

\paragraph{Using generalized Rees algebras}
Let~$C$ again be a smooth curve,
and~$\order{A}$ a hereditary order on $C$.
The canonical bimodule~$\omega_{\order{A}}$ is an invertible sheaf, which defines a sheaf of graded algebras
\begin{equation}
  \widetilde{\order{A}} \colonequals \bigoplus_{n\in\mathbb{Z}} \omega_{\order{A}}^{\otimes -n}.
\end{equation}
The center $\ZZ(\widetilde{\order{A}})$ is a sheaf of graded commutative algebras on $C$,
and $\widetilde{\order{A}}$ can therefore be viewed as a sheaf of graded algebras on the relative spectrum
\begin{equation}
  r'\colon \widetilde C \colonequals \relSpec_C\ZZ(\widetilde{\order{A}}) \longrightarrow C.
\end{equation}
The following lemma explains how~$\widetilde{\order{A}}$
globalizes the generalized Rees algebra construction from \cref{subsection:generalized-rees-algebras}.

\begin{lemma}
  \label{lemma:global-rees-construction}
  Over every point $p\in C$ the stalk $\widetilde{\order{A}}_p$ is the generalized Rees algebra of $\order{A}_p$.
  In particular, $\widetilde{\order{A}}$ is a graded Azumaya algebra on $\widetilde C$.
\end{lemma}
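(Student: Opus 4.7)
The plan is to reduce the statement to a stalkwise computation on $C$, identify the degree-one piece of $\widetilde{\order{A}}_p$ with the ramification ideal from \cref{definition:ramification-ideal}, and then invoke the results of \cref{subsection:generalized-rees-algebras}. Fix a closed point $p \in C$ and write $R \colonequals \cO_{C,p}$ and $\Lambda \colonequals \order{A}_p$. Since the ramification ideal, symbolic powers, and the canonical bimodule all commute with localisation, it suffices to produce a graded isomorphism $\widetilde{\order{A}}_p \cong \widetilde \Lambda$.

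The main step is the identification of the degree-one piece $(\omega_{\order{A}}^{\otimes -1})_p$ of $\widetilde{\order{A}}_p$ with $\ramification(\Lambda/R)$. By the standard description of the canonical bimodule one has $\omega_{\order{A}} = \intHom_{\cO_C}(\order{A},\omega_C)$, whose stalk is $\Hom_R(\Lambda,R)\otimes_R \omega_R \cong \codiff(\Lambda/R)\otimes_R \omega_R$, using that $\omega_R$ is invertible. The classical identification of the different of a hereditary order with its ramification ideal (see e.g.\ \cite{MR0393100,MR0978602}) gives $\codiff(\Lambda/R) \cong \ramification(\Lambda/R)^{(-1)}$, and hence
\begin{equation}
  (\omega_{\order{A}}^{\otimes -n})_p \;\cong\; \ramification(\Lambda/R)^{(n)} \otimes_R \omega_R^{\otimes -n}
\end{equation}
for every $n \in \mathbb Z$. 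Since $C$ is a smooth curve, $\omega_R$ is free of rank one over $R$, so fixing a local generator trivialises the factors $\omega_R^{\otimes -n}$ consistently, and summing over $n$ produces the desired graded isomorphism $\widetilde{\order{A}}_p \cong \widetilde \Lambda$ of $\mathbb Z$-graded algebras.

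The second assertion then follows formally: being Azumaya is local on the base, and having matched every stalk of $\widetilde{\order{A}}$ above $C$ with a local Rees algebra $\widetilde \Lambda$, the reflexive Azumaya property of the latter, recorded after \cref{definition:ramification-ideal} and originating in \cite[Proposition~5.1(c)]{MR0978602}, transfers to $\widetilde{\order{A}}$. The main obstacle is bookkeeping: tracking the sign of the exponent carefully so that the codifferent is matched with $\ramification^{(-1)}$ rather than its inverse, and verifying that the local trivialisation of $\omega_C$ intertwines the multiplication on symbolic powers with that on tensor powers of $\omega_{\order{A}}$. Once the degree-one identification is settled, the remainder of the argument is formal.
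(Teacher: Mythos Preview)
Your argument is correct and follows the same route as the paper: both localise at $p$, identify $\omega_{\order{A},p}^{-1}$ with the ramification ideal (the paper cites \cite[Proposition~2.7]{MR2492474} directly; you unwind it via the codifferent), trivialise $\omega_R$ using smoothness of $C$, and then appeal to the reflexive Azumaya property of $\widetilde\Lambda$ from \cite{MR0978602}. The one point you gloss over is the passage from \emph{reflexive} Azumaya to Azumaya: the paper makes explicit that since $R=\cO_{C,p}$ has dimension~$1$, being Azumaya at every height-one prime already forces $\widetilde\Lambda$ itself to be Azumaya, which is what the lemma actually asserts.
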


\begin{proof}
  For every~$p\in C$ the stalk $\Lambda = \order{A}_p$ is a hereditary order over $R = \mathcal{O}_{C,p}$,
  and it follows by \cite[Proposition~2.7]{MR2492474} that
  \begin{equation}
    \omega_{\order{A},p}^{-1} \cong \omega_R^{-1} \otimes_R \ramification(\Lambda/R) \cong \ramification(\Lambda/R).
  \end{equation}
  It is then immediate that $\widetilde{\order{A}}_p = \widetilde \Lambda$.
  Because $\widetilde\Lambda$ is reflexive Azumaya algebra, by definition any localisation at a height one prime is Azumaya.
  Since $R$ is a local ring of dimension $1$, this means that $\widetilde \Lambda$ itself is already Azumaya.
  Hence, so is $\widetilde{\order{A}}$.
\end{proof}

The grading on $\ZZ(\widetilde{\order{A}})$ induces an action of $\Gm$ on $\widetilde C$,
and therefore defines a quotient stack
\begin{equation}
  r\colon \stack{C}_\rt \colonequals [\widetilde{C}/\Gm] \longrightarrow C,
\end{equation}
equipped with an Azumaya algebra~$\order{A}_\rt$ corresponding to $\widetilde{\order{A}}$
under the equivalence between coherent sheaves on~$\stack{C}_\rt$
and graded coherent sheaves on $\widetilde C$.
We claim that~$\stack{C}_\rt$ is a root stack.

\begin{proposition}
  The stack~$\stack{C}_\rt$ is the root stack associated to the ramification data of $\order{A}$.
\end{proposition}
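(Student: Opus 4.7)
The plan is to verify the claim étale\dash locally on $C$ and then glue. Fixing a closed point $p \in C$, I would pass to the henselisation $R = \mathcal{O}_{C,p}^{\mathrm{h}}$ and set $\Lambda \colonequals \order{A}_p \otimes_{\mathcal{O}_{C,p}} R$; by \cref{lemma:global-rees-construction} it then suffices to identify $[\Spec \ZZ(\widetilde\Lambda)/\Gm]$ with the appropriate local root stack of $\Spec R$, since both constructions commute with étale base change on $C$.

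There are two local cases. If $p$ lies outside the ramification locus of $\order{A}$, then $\ramification(\Lambda/R) = \Lambda$ (empty product) and $\widetilde \Lambda \cong \Lambda[T, T^{-1}]$ has center $R[T, T^{-1}]$; the $\Gm$-quotient is $\Spec R$, agreeing with the root stack which is trivial there. If instead $p = p_i$ is ramified of index $e = e_i$ along $\mathfrak{p} = (a_i)$, then in the henselisation only this single prime ramifies, and \cref{corollary:root-stack-local} yields $\ZZ(\widetilde\Lambda)/(1 - aT^e) \cong R[t]/(t^e - a_i)$ with $a$ as in the corollary. By definition, the $\mu_e$-quotient of $\Spec R[t]/(t^e-a_i)$ is precisely the restriction of the root stack $\sqrt[e]{(p_i, C)}$ to a neighbourhood of $p_i$. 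The remaining, and crucial, step is to identify this $\mu_e$-presentation with $[\Spec \ZZ(\widetilde\Lambda)/\Gm]$: the closed immersion $\Spec R[t]/(t^e - a_i) \hookrightarrow \Spec \ZZ(\widetilde\Lambda)$ cut out by $1 - aT^e$ is $\Gm$-equivariant with stabiliser $\mu_e \subset \Gm$, and because $aT^e$ is a central homogeneous degree\dash $e$ unit of $\ZZ(\widetilde\Lambda)$ (its inverse $a^{-1} T^{-e}$ lies in the degree $-e$ summand, since $\mathfrak{p}^{\lceil -(e-1)\rceil} = \mathfrak{p}^{-(e-1)}$), the induced map of quotient stacks is an equivalence.

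Finally, the ramification points of $\order{A}$ are finite and pairwise disjoint on $C$, and both $\stack{C}_\rt$ and the iterated root stack of $C$ along $\sum p_i$ with indices $e_i$ are obtained by étale descent from these local models. The local identifications therefore glue to a global isomorphism of stacks. The main obstacle I expect is the stack\dash theoretic passage from the $\Gm$-presentation via the full graded Rees algebra to the $\mu_e$-presentation via its finite cover; once $aT^e$ is confirmed to be a unit in $\ZZ(\widetilde\Lambda)$, so that $\{aT^e = 1\}$ is a faithful slice of the $\Gm$-action, the rest is a direct unpacking of \cref{corollary:root-stack-local} together with the standard local description of a root stack as a $\mu_e$-quotient of a cyclic cover.
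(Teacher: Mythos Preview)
Your proposal is correct and follows essentially the same route as the paper: localise at a point, invoke \cref{lemma:global-rees-construction} to identify the stalk of $\widetilde{\order{A}}$ with the generalised Rees algebra, then use \cref{corollary:root-stack-local} to recognise the $\mu_e$-quotient of $\Spec R_e$ as the local root stack. The paper's proof is simply a terser version of yours --- it works over the Zariski local ring $R = \mathcal{O}_{C,p}$ rather than its henselisation (which suffices, since both the Rees construction and the root stack are Zariski-local on a smooth curve), does not separate the unramified case, and asserts the equivalence $[\Spec \widetilde R/\Gm] \cong [\Spec R_e/\mu_e]$ without spelling out the unit argument you give for $aT^e$.
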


\begin{proof}
  Given any point $p\in C$ let again $\Lambda = \order{A}_p$
  denote the order over $R = \mathcal{O}_{C,p}$,
  so that $\widetilde \Lambda = \widetilde{\order{A}}_p$ as in \Cref{lemma:global-rees-construction}.
  Since $R$ is regular it follows by \cref{lemma:center-as-scaled-rees-ring} that $\ZZ(\widetilde{\order{A}})_p = \widetilde R$.
  By \cref{corollary:root-stack-local} we obtain
  \begin{equation}
    [\Spec \widetilde R/\Gm] \cong [\Spec R_e/\mu_e] = \left[\Spec \frac{R[t]}{(t^e - s)}/\mu_e\right],
  \end{equation}
  where $s\in R$ is any generator of the maximal ideal of $R$, and $e$ is the ramification index of $\order{A}_p$ at $p$.
  It follows that $\stack{C}_\rt$ is the root stack over $C$ associated to the ramification data.
\end{proof}

We aim to show that there is an equivalence as in \Cref{theorem:chan-ingalls}.

\begin{lemma}
  There exists an adjoint pair of equivalences of categories
  \begin{equation}
    \label{equation:adjunction-dim-one}
    \begin{tikzcd}
      \coh^\mathbb{Z}(\widetilde C,\widetilde{\order{A}})
      \arrow[rr, shift left, "(-)_0"]&&
      \arrow[ll, shift left, "-\otimes_{\order{A}}\widetilde{\order{A}}"]
      \coh(C,\order{A}).
    \end{tikzcd}
  \end{equation}
\end{lemma}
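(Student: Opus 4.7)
The plan is to recognize $\widetilde{\order{A}}$ as a sheaf of strongly $\mathbb{Z}$-graded algebras and then invoke the sheaf-theoretic version of Dade's theorem on strongly graded rings. The crucial input is that the canonical bimodule $\omega_{\order{A}}$ is invertible (as already used in the proof of \cref{lemma:global-rees-construction} via \cite[Proposition~2.7]{MR2492474}), so that the multiplication morphisms
\begin{equation*}
  \widetilde{\order{A}}_n \otimes_{\order{A}} \widetilde{\order{A}}_m = \omega_{\order{A}}^{\otimes -n} \otimes_{\order{A}} \omega_{\order{A}}^{\otimes -m} \longrightarrow \omega_{\order{A}}^{\otimes -(n+m)} = \widetilde{\order{A}}_{n+m}
\end{equation*}
are isomorphisms for all $n,m\in\mathbb{Z}$. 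I would record this as a preliminary observation, since all subsequent steps rely on it.

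Next I would note that the adjunction in \eqref{equation:adjunction-dim-one} is simply the standard extension/restriction-of-scalars adjunction attached to the inclusion $\order{A} = \widetilde{\order{A}}_0\hookrightarrow\widetilde{\order{A}}$ of graded sheaves of algebras, so the natural bijection
\begin{equation*}
  \Hom_{\widetilde{\order{A}}}^{\mathbb{Z}}\bigl(M\otimes_{\order{A}}\widetilde{\order{A}},N\bigr) \cong \Hom_{\order{A}}(M,N_0)
\end{equation*}
is formal. Coherence is preserved on both sides: $(-)_0$ just picks off a single graded piece, while tensoring with each $\widetilde{\order{A}}_n = \omega_{\order{A}}^{\otimes -n}$ sends coherent $\order{A}$-modules to coherent $\order{A}$-modules by invertibility.

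The heart of the argument is to verify that the unit and counit are isomorphisms. The unit $M\to(M\otimes_{\order{A}}\widetilde{\order{A}})_0$ is the identity, since $\widetilde{\order{A}}_0=\order{A}$. For the counit $N_0\otimes_{\order{A}}\widetilde{\order{A}}\to N$, the question in degree $n$ is whether the multiplication map $N_0\otimes_{\order{A}}\omega_{\order{A}}^{\otimes -n}\to N_n$ is an isomorphism. I would construct an explicit two-sided inverse using the action in the opposite direction: multiplication by $\widetilde{\order{A}}_{-n}=\omega_{\order{A}}^{\otimes n}$ sends $N_n$ into $N_0$, and combined with the tautological isomorphism $\omega_{\order{A}}^{\otimes n}\otimes_{\order{A}}\omega_{\order{A}}^{\otimes -n}\cong\order{A}$ (again from invertibility) this provides the inverse.

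The main obstacle is essentially notational: fixing what $\coh^{\mathbb{Z}}(\widetilde C,\widetilde{\order{A}})$ means in sufficient detail (most naturally, $\Gm$-equivariant coherent $\widetilde{\order{A}}$-modules, equivalently graded coherent modules over the sheaf of graded algebras on $C$), and confirming that the classical strongly-graded Morita theorem transfers cleanly to this setting. Since both the strong grading condition and the bijectivity of the counit can be checked stalk-locally on $C$, no essential difficulties should arise beyond bookkeeping, and the statement reduces to the classical affine version of Dade's theorem.
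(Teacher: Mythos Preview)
Your proposal is correct and takes a cleaner, more conceptual route than the paper. You identify $\widetilde{\order{A}}$ as strongly $\mathbb{Z}$-graded (via the invertibility of $\omega_{\order{A}}$, which the paper has already recorded) and then run the standard Dade argument globally: the strong grading gives an explicit inverse to the counit in each degree. The paper instead verifies the counit is an isomorphism by passing to an \'etale-local chart, invoking the normal form \eqref{equation:normal-form-hereditary-order} of a hereditary order to exhibit an explicit element $\zeta\in\Lambda$ with $\zeta^r=u$ and $\widetilde{\Lambda}=\bigoplus_n\Lambda\zeta^n$, and then checking $M_n=M_0\zeta^n$ by hand. In effect the paper re-proves that the local Rees algebra is strongly graded using the concrete structure theory, whereas you extract this directly from the invertibility of $\omega_{\order{A}}$. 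Your approach is shorter and makes the underlying mechanism (Dade's theorem) transparent; the paper's approach has the minor advantage of displaying the local structure of $\widetilde{\Lambda}$ explicitly, which is in keeping with its goal of giving a hands-on account of the construction.
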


\begin{proof}
  It suffices to show that the adjunction between the two functors induce isomorphisms on objects.
  For each module $\cN \in \coh(C,\order{A})$ the adjunction gives rise to the natural unit map
  \begin{equation}
    \cN \to (\cN \otimes_{\order{A}} \widetilde{\order{A}})_0,\quad n \mapsto n \otimes 1.
  \end{equation}
  It is clear that this is an isomorphism, since $\widetilde{\order{A}}_0 = \order{A}$ by construction.
  Conversely, given a module $\cM \in \coh^\mathbb{Z}(\widetilde C,\widetilde{\order{A}})$,
  the adjunction gives rise to the natural counit map
  \begin{equation}
    \label{equation:adjunction-curve-global}
    \cM_0 \otimes_{\order{A}} \widetilde{\order{A}} \to \cM,
  \end{equation}
  given by the restriction of the module action on $\cM$.
  It can be checked \'etale-locally that this is an isomorphism:
  picking a closed point~$p\in C$
  and writing $R= \mathcal{O}_{C,p}^{\mathrm{sh}} \cong \field\{u\}$, $\Lambda = \order{A}_p \otimes_C R$,
  and $M= \cM_p\otimes_C R$, it suffices to consider
  \begin{equation}
    \label{equation:adjunction-curve-etale-local}
    M_0 \otimes_{\Lambda} \widetilde{\Lambda} \to M,\quad m_0 \otimes a \mapsto m_0a.
  \end{equation}
  To see this, we note that $\Lambda$ has a normal form \eqref{equation:normal-form-hereditary-order},
  and from this presentation it is easy to check that the generalised Rees algebra is of the form
  \begin{equation}
    \widetilde{\Lambda} = \bigoplus_{n\in\mathbb{Z}} \Lambda \zeta^n,
  \end{equation}
  where $\zeta \in \Lambda$ satisfies $\zeta^r = u$, and $\zeta^{-1} = \zeta^{r-1}u^{-1}$
  is its inverse in $\widetilde{\Lambda}$.
  Since every homogeneous $m_n \in M_n$ can be written as $m_n = (m_n\zeta^{-n}) \cdot \zeta^n$,
  we moreover have $M_n = M_0\zeta^n$ for each $n\in\mathbb{Z}$.
  The composition
  \begin{equation}
    M_0 \otimes_\field \field\zeta^n \xrightarrow{\ \sim\ }
    (M_0 \otimes_\Lambda \widetilde{\Lambda})_n
    \xrightarrow{\ m_0\otimes a \mapsto m_0 a\ } M_n,
  \end{equation}
  is therefore an isomorphism for every $n\in\bbZ$,
  and it follows that \eqref{equation:adjunction-curve-etale-local} is also an isomorphism.
\end{proof}

It follows by Tsen's theorem
and our standing assumption that~$\field$ is algebraically closed
that the Brauer group of a smooth (stacky) curve is trivial,
so that $\order{A}_\rt$ is necessarily a split Azumaya algebra on $\stack{C}_\rt$.
Hence, we obtain the following corollary.

\begin{corollary}
  \label{corollary:no-azumaya}
  There are equivalences $\coh(\stack{C}_\rt) \simeq \coh(\stack{C}_\rt,\order{A}_\rt) \simeq \coh(C,\order{A})$.
\end{corollary}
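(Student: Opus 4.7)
The plan is to chain together results already in hand. For the second equivalence $\coh(\stack{C}_\rt, \order{A}_\rt) \simeq \coh(C, \order{A})$, I would first unwind the construction: by the very definition of $\stack{C}_\rt = [\widetilde{C}/\Gm]$ and $\order{A}_\rt$ as the descent of the $\Gm$-equivariant sheaf $\widetilde{\order{A}}$, there is a tautological identification $\coh(\stack{C}_\rt, \order{A}_\rt) \simeq \coh^{\mathbb{Z}}(\widetilde{C}, \widetilde{\order{A}})$, coming from the standard equivalence between coherent sheaves on a quotient stack and graded (equivariant) coherent sheaves on the cover. The preceding lemma then supplies the equivalence $\coh^{\mathbb{Z}}(\widetilde{C}, \widetilde{\order{A}}) \simeq \coh(C, \order{A})$, and composing these gives the second half of the statement.

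For the first equivalence $\coh(\stack{C}_\rt) \simeq \coh(\stack{C}_\rt, \order{A}_\rt)$, I would appeal to Morita theory. By \cref{lemma:global-rees-construction}, $\widetilde{\order{A}}$ is graded Azumaya on $\widetilde{C}$, so its descent $\order{A}_\rt$ is an Azumaya algebra on the smooth stacky curve $\stack{C}_\rt$. The paragraph preceding the statement records that Tsen's theorem, combined with our standing assumption that $\field$ is algebraically closed, forces the Brauer group of $\stack{C}_\rt$ to vanish. Thus $\order{A}_\rt$ is split: $\order{A}_\rt \cong \intHom_{\cO_{\stack{C}_\rt}}(\cE, \cE)$ for some locally free sheaf $\cE$ on $\stack{C}_\rt$, and tensoring with $\cE$ implements the required Morita equivalence.

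No step really constitutes a substantial obstacle: everything rests on the preceding lemma and on the vanishing of the Brauer group, which is invoked as a standing fact. The corollary therefore functions primarily as a packaging of these two inputs into a single clean statement, setting up the 1-dimensional case of the dictionary in the form that will be generalized in later sections.
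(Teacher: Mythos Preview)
Your proposal is correct and matches the paper's approach exactly: the paper derives the corollary from the preceding lemma (giving the second equivalence via the identification with graded modules) together with Tsen's theorem forcing $\order{A}_\rt$ to be split Azumaya on the smooth stacky curve (giving the first equivalence by Morita). There is nothing to add.
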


\subsection{Tame orders of global dimension 2 and Azumaya algebras on stacky surfaces}
\label{subsection:dictionary-dim-2}
The 1-dimensional dictionary from \cite{MR2018958}
was extended to a 2-dimensional version in \cite{2206.13359}.
We will now recall its statement and some of the details of the proof,
as they will be needed for what follows.

\begin{theorem}[Faber--Ingalls--Okawa--Satriano]
  \label{theorem:faber-ingalls-okawa-satriano}
  Let~$S$ be a normal quasiprojective surface.
  Let~$\order{A}$ be a tame order of global dimension~2.
  Then there exists a diagram
  \begin{equation}
    \label{equation:fios-diagram}
    \begin{tikzcd}
      \order{A}_\can \arrow[d, dashed, no head] & \order{A}_\rt \arrow[d, dashed, no head] & \order{A} \arrow[d, dashed, no head] \\
      \stack{S}_\can \arrow[r, "c"]    & \stack{S}_\rt \arrow[r, "r"]    & S
    \end{tikzcd}
  \end{equation}
  where
  \begin{itemize}
    \item $r\colon\stack{S}_\rt\to S$ is a root stack construction;
    \item $c\colon\stack{S}_\can\to\stack{S}_\rt$ is the canonical stack associated to~$\stack{S}_\rt$;
    \item $\order{A}_\rt$ is a reflexive Azumaya algebra on~$\stack{S}_\rt$
      such that~$r_*\order{A}_\rt\cong\order{A}$;
    \item $\order{A}_\can$ is an Azumaya algebra on~$\stack{S}_\can$
      such that~$c_*\order{A}_\can\cong\order{A}_\rt$;
  \end{itemize}
  such that there exist equivalences
  \begin{equation}
    \label{equation:fios-equivalence}
    \coh(\stack{S}_\can,\order{A}_\can)
    \simeq
    \coh(\stack{S}_\rt,\order{A}_\rt)
    \simeq
    \coh(S,\order{A}).
  \end{equation}
\end{theorem}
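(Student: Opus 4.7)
The plan is to extend the 1\dash dimensional construction of \cref{subsection:dictionary-dim-1} by separating the codimension\dash one behaviour, handled by a generalized Rees algebra and a root stack, from the codimension\dash two behaviour, handled by passage to a canonical stack. Concretely, first extract the ramification data of~$\order{A}$: write the discriminant as a union~$\Delta = \bigcup_i \Delta_i$ with ramification indices~$e_i \geq 2$, and define~$\stack{S}_\rt$ as the iterated root stack of~$S$ of multiplicity~$e_i$ along~$\Delta_i$. By \cref{corollary:root-stack-local}, in an étale neighbourhood of the generic point of any~$\Delta_i$ this agrees with the scaled Rees ring model, so~$\stack{S}_\rt$ is the correct candidate at codimension one.

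Second, I build~$\order{A}_\rt$ by sheafifying the generalized Rees algebra. Tameness of~$\order{A}$ ensures that the ramification ideal of \cref{definition:ramification-ideal} is defined at every height\dash one prime of~$S$, and globalising yields a sheaf of~$\mathbb{Z}$\dash graded algebras~$\widetilde{\order{A}} = \bigoplus_n \omega_{\order{A}}^{\otimes -n}$ on~$S$. The argument of \cref{lemma:global-rees-construction}, applied codimension\dash by\dash codimension on~$S$, then shows that~$\widetilde{\order{A}}$ is a reflexive Azumaya algebra on~$\relSpec_S \ZZ(\widetilde{\order{A}})$; taking the~$\Gm$\dash quotient produces a reflexive Azumaya algebra~$\order{A}_\rt$ on~$\stack{S}_\rt$ with~$r_* \order{A}_\rt \cong \order{A}$. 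The equivalence~$\coh(\stack{S}_\rt, \order{A}_\rt) \simeq \coh(S, \order{A})$ is then obtained from the same adjunction as in~\eqref{equation:adjunction-dim-one}; the unit and counit are checked to be isomorphisms étale\dash locally, which reduces at each codimension\dash one point of~$S$ to the statement already verified in the 1\dash dimensional case.

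Third, I pass from~$\stack{S}_\rt$ to~$\stack{S}_\can$. The failure of~$\order{A}_\rt$ to be honestly Azumaya is concentrated in codimension two, where the coarse space of~$\stack{S}_\rt$ generically acquires cyclic quotient singularities determined by the local structure of the tame order. Let~$c\colon \stack{S}_\can \to \stack{S}_\rt$ be the canonical stack associated to these singularities, a smooth Deligne--Mumford stack that presents each singularity as a quotient by its isotropy group. I then define~$\order{A}_\can \colonequals c^* \order{A}_\rt$; by design of the canonical stack, pulling back a reflexive module over a cyclic quotient to the smooth cover produces an equivariant locally free module, so~$\order{A}_\can$ is an honest Azumaya algebra, and~$c_* \order{A}_\can \cong \order{A}_\rt$ because~$c$ is an isomorphism outside a codimension\dash two locus while~$\order{A}_\rt$ is reflexive. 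The resulting equivalence~$\coh(\stack{S}_\can, \order{A}_\can) \simeq \coh(\stack{S}_\rt, \order{A}_\rt)$ is the Morita\dash theoretic shadow of this same codimension\dash two comparison.

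The main obstacle is the third step: one must combine tameness and global dimension~$2$ to show that the étale\dash local model of~$\order{A}$ at every codimension\dash two point is of the very restricted shape needed for the centre to be a cyclic quotient singularity and for an equivariant Azumaya presentation to exist on the canonical cover. This requires a classification of local tame orders of global dimension~$2$ in the spirit of Artin--Mumford and Reiten--Van den Bergh, and is where the bulk of the technical work of~\cite{2206.13359} lies.
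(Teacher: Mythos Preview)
Your outline matches the architecture that the paper recalls from \cite{2206.13359}: Rees algebra / root stack in codimension one, then canonical stack in codimension two. Two technical points, however, are not quite right and would cause the argument to fail as written.

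First, in dimension~2 the graded algebra is built from \emph{reflexive} powers~$\omega_{\order{A}}^{(n)}$, not ordinary tensor powers~$\omega_{\order{A}}^{\otimes -n}$. In the 1\dash dimensional setting of \cref{subsection:dictionary-dim-1} the canonical bimodule is invertible because the order is hereditary, so the two agree; for a tame order on a surface~$\omega_{\order{A}}$ is only reflexive, and one must take~$\widetilde{\order{A}} = \bigoplus_{n\in\mathbb{Z}}\omega_{\order{A}}^{(n)}$ as in \eqref{equation:graded-order}. Relatedly, in the paper the root stack is not defined abstractly as an iterated root stack in the~$\Delta_i$ and then matched to the Rees construction; rather~$\stack{S}_\rt$ is \emph{defined} as~$[\relSpec_S\ZZ(\widetilde{\order{A}})/\Gm]$, and the identification with a root stack is a consequence of the local computation of the center.

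Second, and more seriously, your definition~$\order{A}_\can \colonequals c^*\order{A}_\rt$ is incorrect. Pulling back a reflexive sheaf along~$c$ does \emph{not} in general produce a locally free sheaf on the canonical cover; the correct definition is the reflexive hull~$\order{A}_\can \colonequals (c^*\order{A}_\rt)^{\vee\vee}$. Locally this is the passage from~$\Lambda_e\otimes_{R_e}T$ to~$\Gamma = (\Lambda_e\otimes_{R_e}T)^{**}$, and it is only after taking this double dual that one obtains an Azumaya algebra. The isomorphism~$c_*\order{A}_\can\cong\order{A}_\rt$ and the equivalence \eqref{eq:cancatequiv} then rest on the identification~$\Gamma^H\cong\Lambda_e$ and a Morita argument via~$\Gamma*H\cong\End_{\Lambda_e}(\Gamma)$, not on a bare ``codimension\dash two comparison''. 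Your final paragraph correctly identifies that the hard input is a local structure theorem for tame orders of global dimension~2, but the step you skipped is precisely the one where that structure is used.
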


\paragraph{The root stack}
Consider as in \cref{theorem:faber-ingalls-okawa-satriano}
a normal connected quasi-projective surface~$S$
with a tame~$\mathcal{O}_S$-order~$\order{A}$ of global dimension~2.
Then the order~$\order{A}_\rt$ on the root stack~$r\colon \stack{S}_\rt \to S$
is defined via a global version of the generalized Rees algebra construction, see \cite[Section 3]{2206.13359}
(although the terminology does not appear as such in op.~cit.):
replacing the ramification ideal by the reflexive dual~$\omega_\order{A}^{-1}$ of the dualizing bimodule~$\omega_\order{A}$,
one obtains a sheaf of~$\mathbb{Z}$-graded algebras
\begin{equation}
  \label{equation:graded-order}
  \widetilde{\order{A}} = \bigoplus_{n\in\mathbb{Z}} \omega_{\order{A}}^{(n)},
\end{equation}
which we can interpret as a $\mathbb{Z}$-graded reflexive Azumaya algebra over the relative affine scheme
\begin{equation}
  \label{equation:root-stack-S}
  r'\colon \widetilde S \colonequals \relSpec_S \ZZ(\widetilde{\order{A}}) \longrightarrow S.
\end{equation}
The grading on the center yields a~$\Gm$-action on~$\widetilde S$.
The map $r'$ is invariant for the action and therefore factors over the quotient to a map
\begin{equation}
  r\colon \stack{S}_\rt = [\widetilde S/\Gm] \to S.
\end{equation}
The order~$\order{A}_\rt$ of \cite[\S3]{2206.13359}
is the sheaf of algebras corresponding to the sheafification~$\widetilde{\order{A}}$
along the equivalence~$\coh \stack{S}_\rt \simeq \coh^{\mathbb{Z}} \widetilde S$.
This order satisfies~$r_*\order{A}_\rt \cong \order{A}$ by \cite[Lemma 4.3]{2206.13359}
and furthermore there are equivalences of categories
\begin{equation}
  \begin{tikzcd}[column sep=large]
    \coh(\stack{S}_\rt,\order{A}_\rt)\arrow[rr, shift left, "r_*"] & & \coh(S,\order{A}) \arrow[ll, shift left, "r^*(-)\otimes_{r^*\order{A}}\order{A}_\rt"].
  \end{tikzcd}
\end{equation}
We remark that~$r$ is an isomorphism over~$S\setminus \Delta$,
where~$\Delta$ denotes the ramification locus of~$\order{A}_\rt$,
and that~$\order{A}_\rt$ is isomorphic to~$\order{A}$ when restricted to~$S\setminus \Delta$.

It will be useful for the rest of the paper to detail the relation between \cite{2206.13359} and \cite{MR0978602}.
Since~$S$ is normal, every point in~$S$ admits a neighbourhood~$U\subset S$ such that~$R = \Gamma(U,\mathcal{O}_S)$
is an integrally closed domain,
equipped with a tame order~$\Lambda = \Gamma(U,\order{A})$.
Writing~$P_1,\ldots,P_n \subset \Lambda$ for the ramified primes
over~$\mathfrak{p}_1,\ldots,\mathfrak{p}_n \triangleleft R$ as before,
the formula in \cite[Proposition~2.7]{MR2492474}
shows that the dualising bimodule of~$\Lambda$ is given by
\begin{equation}
  \omega_\Lambda = \omega_R \otimes_R \left( \prod_{i=1}^nP_i\mathfrak{p}_i^{-1} \right).
\end{equation}
Since~$S$ is normal, we may shrink~$U$ such that~$\omega_R^{**}$ is trivial.
It then follows that the reflexive dual~$\omega_\Lambda^{-1}$
is given by~$\ramification(\Lambda/\kern-1pt R)$ from \eqref{equation:ramification-ideal} as in \cite[Remark~2.12]{2206.13359}.
In particular, there is an isomorphism of~$\mathbb{Z}$-graded algebras
\begin{equation}
  \Gamma(U,\widetilde{\order{A}}) \cong \widetilde \Lambda,
\end{equation}
and the root stack is locally given by the map~$\widetilde S \times_S U \cong \Spec \widetilde R \to \Spec R$.
If the primes~$\mathfrak{p}_1,\ldots,\mathfrak{p}_n$ are principal, which can be assumed if for example~$S$ is locally factorial, then one can again define the finite quotients~$\Lambda_e$ and~$R_e$.
It follows from \cite[Corollary 2.15]{2206.13359} that there is an isomorphism of stacks
\begin{equation}
  \stack{S}_\rt \times_S U \cong [\Spec \widetilde R/ \Gm] \cong [\Spec R_e/\mu_e],
\end{equation}
which identifies the sheaves of orders associated to~$\widetilde \Lambda$ and~$\Lambda_e$
with~$\order{A}_\rt|_{\stack{S}_\rt\times_S U}$.
In particular, the local structure of~$\stack{S}_\rt$ is described by \cref{corollary:root-stack-local}.

\begin{remark}
  By inspecting the construction of the root stack
  using generalized Rees algebras
  in \cite{2206.13359} for orders on surfaces,
  as just recalled,
  or \cref{subsection:dictionary-dim-1} for orders on curves,
  and the papers upon which this construction builds in the affine case,
  namely \cite[\S5]{MR0978602}
  (see also \cite[\S2]{MR2492474})
  one notices that the construction should also work
  in a higher-dimensional setting.
  It would then give rise to an equivalence between
  \begin{itemize}
    \item the category of left modules over a tame order of finite global dimension over a smooth variety;
    \item the category of left modules over a reflexive Azumaya algebra (which is Azumaya if the ramification divisor is smooth)
      on the associated root stack.
  \end{itemize}
  We do not address this generalization,
  as we are only concerned with 1- and 2-dimensional objects.
\end{remark}

\paragraph{The canonical stack}
It is shown in \cite{2206.13359} that~$\stack{S}_\rt$ is
a Deligne--Mumford stack
with linearly reductive quotient singularities.
Such a stack has a natural resolution given by the canonical stack~$c\colon \stack{S}_\can \to \stack{S}_\rt$ of \cite{MR1005008}.
Let us recall the complete local description of the canonical stack given in \cite[\S4]{2206.13359}.
Around each point~$p\in S$ we can consider a complete local chart~$\Spec R \subset S$
such that~$\stack{S}_\rt \to S$ can be locally presented as
\begin{equation}
  [\Spec R_e/\mu_n] \to \Spec R,
\end{equation}
where~$R_e = R[t]/(t^e-a_1^{e/e_1}\cdots a_n^{e/e_n})$ as in \Cref{corollary:root-stack-local}.
By construction $R_e$ can be presented as an invariant ring~$R_e \cong T^H \subset T$
of a finite cyclic group~$H$ acting on~$T = k[\![x,y]\!]$.
The canonical stack is constructed such that~$c\colon \stack{Y}_\can \to \stack{Y}_\rt$
pulls back along the \'etale cover $\Spec R_e \to [\Spec R_e/\mu_n]$ to the map
\begin{equation}
  [\Spec T/H] \to \Spec R_e.
\end{equation}
This local description shows that this is a birational modification in codimension~$2$.
In \cite{2206.13359} the authors construct the~$\mathcal{O}_{\stack{S}_\can}$-order~$\order{A}_\can$ as the reflexive hull
\begin{equation}
  \order{A}_\can \colonequals (c^*\order{A}_\rt)^{\vee\vee},
\end{equation}
of the pullback of~$\order{A}_\rt$,
and show that this is a sheaf of Azumaya algebras satisfying~$c_*\order{A}_\can \cong \order{A}_\rt$.

The order can be locally described as follows.
Writing~$\Lambda_e$ for the pullback of~$\order{A}_\rt$ to~$\Spec R_e$,
the pullback of~$\order{A}_\can$ to~$\Spec T$ is given by the reflexive closure
\begin{equation}
  \Gamma = (\Lambda_e \otimes_{R_e} T)^{**}.
\end{equation}
There is an induced action of $H$ on $\Gamma$ and it is shown in \cite{2206.13359} that there is an equivalence
\begin{equation}\label{eq:cancomlocequiv}
  \begin{tikzcd}[column sep=huge]
    \mmod^H \Gamma
    \arrow[rr, shift left, "{\Hom_\Gamma^H(\Gamma,-)}"]&&
    \arrow[ll, shift left, "{-\otimes_{\Lambda_e} \Gamma}"]
    \overset{\sim}\longleftrightarrow \mmod \Lambda_e.
  \end{tikzcd}
\end{equation}
It is shown in \cite{2206.13359} that this induces a global equivalence
\begin{equation}\label{eq:cancatequiv}
\begin{tikzcd}[column sep=huge]
  \coh(\stack{S}_\can,\order{A}_\can)\arrow[rr, shift left, "c_*"]&& \coh(\stack{S}_\rt,\order{A}_\rt) \arrow[ll, shift left, "c^*(-)\otimes_{c^*\order{A}_\rt}\order{A}_\can"].
\end{tikzcd}
\end{equation}

We end this section by explicitly spelling out some properties of the bottom-up construction.

\begin{proposition}
  \label{proposition:r-c-isomorphism}
  Let~$p \in S$ and consider the scheme~$S_p = \Spec \mathcal{O}_{S,p} \subset S$
  associated to the local ring~$\mathcal{O}_{S,p}$ at~$p$. Then:
  \begin{enumerate}
    \item\label{enumerate:r-c-isomorphism-1}
      $p\not\in \Delta$
      if and only if~$\stack{S}_\rt \times_S S_p \to S_p$ is an isomorphism,
    \item\label{enumerate:r-c-isomorphism-2}
      $p \not\in (\Delta^\sing \cup S^\sing)$
      if and only if~$\stack{S}_\can \times_S S_p \to \stack{S}_\rt \times_S S_p$ is an isomorphism.
  \end{enumerate}
\end{proposition}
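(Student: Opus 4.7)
The plan is to deduce both statements from the explicit local descriptions of $\stack{S}_\rt$ and $\stack{S}_\can$ recalled in \cref{subsection:dictionary-dim-2}, combined with \cref{corollary:root-stack-local}, by reading off directly when each map is an isomorphism.

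For part~\eqref{enumerate:r-c-isomorphism-1}, one direction is essentially already remarked in the text: $r$ is an isomorphism over~$S \setminus \Delta$. Since $\Delta$ is closed and $S_p$ consists precisely of the points specialising to $p$, the hypothesis $p \notin \Delta$ forces $S_p \subseteq S \setminus \Delta$, and the base change of $r$ is an isomorphism. For the converse I would invoke the local presentation $\stack{S}_\rt \times_S S_p \cong [\Spec R_e/\mu_e]$ from \cref{corollary:root-stack-local}: if $p \in \Delta$ then some ramified prime passes through~$p$ with ramification index $e_i \geq 2$, so the closed fibre of this local model acquires a nontrivial $\mu_{e_i}$-stabilizer; hence the structure morphism to $S_p$ fails even to be representable, let alone an isomorphism.

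For part~\eqref{enumerate:r-c-isomorphism-2}, I would exploit that the canonical stack $c$ is an isomorphism precisely over the smooth locus of $\stack{S}_\rt$ (this is what the canonical stack resolves). Since $\Spec R_e \to \stack{S}_\rt$ is, étale-locally, a presentation, smoothness of $\stack{S}_\rt$ at (the preimage of) $p$ amounts to regularity of $R_e$ at its closed point. If $p \notin \Delta$ and $p \in S^\smooth$, then by~\eqref{enumerate:r-c-isomorphism-1} one has $R_e = R$ regular. If $p \in \Delta^\smooth$ with $p \in S^\smooth$, then exactly one ramified divisor $V(a)$ passes through~$p$, so $R_e \cong R[t]/(t^e - a)$ is regular at the closed point because $a$ extends to a regular system of parameters of~$R$. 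Conversely, if $p \in S^\sing$, then since $R_e$ is module-finite over~$R$ with $R$ realised as an $R$-module direct summand, regularity of~$R_e$ would force regularity of~$R$; and if $p \in \Delta^\sing$ with $p \in S^\smooth$, then at least two ramified divisors meet at $p$ and the defining equation $t^e - a_1^{e/e_1}\cdots a_n^{e/e_n}$ of $R_e$ has all partial derivatives vanishing at the closed fibre, so $R_e$ is singular there.

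The main subtlety lies in the converse of~\eqref{enumerate:r-c-isomorphism-2}: one must distinguish between stacky structure, which survives in~$\stack{S}_\rt$ without obstructing smoothness and is therefore preserved by~$c$, and genuine non-smoothness of the underlying scheme, which $c$ actually resolves. Both contributions ultimately reduce to the short hypersurface analysis of $\Spec R_e$ sketched above.
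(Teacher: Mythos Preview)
Your proposal is correct and follows essentially the same approach as the paper: both reduce to the local presentation $[\Spec R_e/\mu_e]$ from \cref{corollary:root-stack-local}, and for part~\eqref{enumerate:r-c-isomorphism-2} both use that $c$ is an isomorphism precisely over the smooth locus of $\stack{S}_\rt$ and then test regularity of $R_e$ via the Jacobian criterion. You are in fact slightly more explicit than the paper in two places: the converse of~\eqref{enumerate:r-c-isomorphism-1} (where you argue via nontrivial stabilisers, while the paper only spells out the forward direction) and the $p\in S^\sing$ case (where you give a faithful-flatness descent argument, while the paper absorbs this into ``and conversely''). One minor imprecision: when $p\in\Delta^\sing\cap S^\smooth$ it need not be that two ramified components meet---a single component could be singular at $p$---but your Jacobian argument covers both situations, since in either case the partials of $a_1^{e/e_1}\cdots a_n^{e/e_n}$ vanish at the closed point.
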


\begin{proof}
  \cref{enumerate:r-c-isomorphism-1}
  If~$p \not\in \Delta$ then it follows that the algebra~$A = \order{A} \otimes_S \mathcal{O}_{S,p}$
  is unramified on~$R = \Gamma(S_p,\mathcal{O}_{S,p})$, hence~$\omega_A^{-1} = A$ is trivial.
  The Rees algebra is therefore of the form~$\widetilde A = \bigoplus_{k\in\mathbb{Z}} A$,
  and its center is simply~$\widetilde R = R[T^{\pm}]$.
  Therefore the~$\Gm$-action on~$\Spec \widetilde R = \Spec R \times \Gm$ is free and induces an isomorphism
  \begin{equation}
    \stack{S}_\rt\times_S S_p = [\Spec \widetilde R/\Gm] \cong \Spec R = S_p.
  \end{equation}

  \cref{enumerate:r-c-isomorphism-2}
  The map~$\stack{S}_\can\to \stack{S}_\rt$ is by construction
  an isomorphism outside of the singularities of~$\stack{S}_\rt$.
  Hence it suffices to determine to prove that~$\stack{S}_\rt \times_S S_p$ is smooth
  if and only if~$p$ does not hit the singular loci of~$S$ and~$\Delta \subset S$.
  If~$p \in S$ is smooth with~$p \not\in \Delta$,
  then it follows from \cref{enumerate:r-c-isomorphism-1} that~$\stack{S}_\rt\times_S S_p \cong S_p$ is again smooth,
  and conversely,
  so we are done.
  Hence we can suppose that~$p\in S$ is a smooth point with~$p\in \Delta$ and~$p \not\in\Delta^\sing$.

  Writing again~$A = \order{A} \otimes_S \mathcal{O}_{S,p}$
  for the order on~$R = \Gamma(S_p,\mathcal{O}_{S,p})$,
  we note that the condition~$p \not\in\Delta^\sing$
  is equivalent to the fact that~$A$
  is ramified at exactly one prime~$\mathfrak{p}_1$ with some ramification index~$e \in \mathbb{N}$.
  Since~$p$ is smooth,~$R$ is a UFD and hence~$\mathfrak{p}_1 = (a_1)$ for some irreducible element~$a_1$.
  It follows from \cref{corollary:root-stack-local} that
  \begin{equation}
    \stack{S}_\rt \times_S S_p = \left[\Spec R_e/\mu_e\right] \cong \left[\Spec \frac{R[t]}{(t^e - a_1)}/ \mu_e\right].
  \end{equation}
  Now because~$p \not\in\Delta^\sing$ it follows that~$R/(a_1)$ is smooth,
  and hence~$R[t]/(t^e-a_1)$ is also smooth (for example by the Jacobian criterion for smoothness).
  It follows that~$\stack{S}_\rt \times_S S_p$ is smooth
  and hence~$\stack{S}_\can\times_S S_p \to \stack{S}_\rt\times_S S_p$ is an isomorphism.
  This also proves the converse.
\end{proof}

\section{Restricting to central curves}
\label{section:restriction}
We now come to the first main result of the paper,
which addresses what happens when
we take an order~$\order{A}$ on a surface~$S$,
together with a curve~$C\subset S$,
and consider~$(C,\order{A}|_C)$.
This will usually be an order on~$C$,
and we will explain how the dictionary from \cref{subsection:dictionary-dim-2}
can be used to understand~$(C,\order{A}|_C)$.

\subsection{Restricting the equivalence}
We work in the setting of \cref{subsection:dictionary-dim-2},
with~$\order{A}$ a tame order of global dimension~2,
where we in addition assume that~$S$ is a \emph{smooth} quasiprojective surface.
Let $C\subset S$ be a curve.
We will consider the diagram of fiber products
\begin{equation}
  \label{equation:restriction-diagram}
  \begin{tikzcd}
    \stack{C}_\can \arrow[r, "c_C"] \arrow[d, hookrightarrow] \arrow[dr, phantom, "\usebox\pullback", very near start, color=black]
    & \stack{C}_\rt \ar[r, "r_C"] \arrow[d, hookrightarrow] \arrow[dr, phantom, "\usebox\pullback", very near start, color=black]
    & C \ar[d,hookrightarrow] \\
    \stack{S}_\can \ar[r, "c"]
    & \stack{S}_\rt \ar[r, "r"]
    & S
  \end{tikzcd}
\end{equation}
induced from the bottom row in \eqref{equation:fios-diagram}.
Restricting the orders on the (stacky) surfaces to each of these (stacky) curves yields the sheaves of algebras
\begin{equation}
  \label{equation:notation-restricted-sheaves-of-algebras}
  \order{B} \colonequals \order{A}|_C,\quad
  \order{B}_\rt \colonequals \order{A}_\rt|_{\stack{C}_\rt},\quad
  \order{B}_\can \colonequals \order{A}_\can|_{\stack{C}_\can}.
\end{equation}
on~$C$, $\stack{C}_\rt$ and~$\stack{C}_\can$ respectively.
We will show that these sheaves of algebras are related via the maps $c_C$ and $r_C$,
and that the associated categories of modules are again equivalent.
Namely,
we have the following precise incarnation of \cref{theorem:restriction-introduction}.
\begin{theorem}
  \label{theorem:restriction}
  Let~$S$ be a smooth quasiprojective surface,
  and let~$\order{A}$ be a tame order of global dimension~2.
  Let~$C\subset S$ be a curve.
  With the above notation,
  there exist isomorphisms~$c_{C,*}\order{B}_\can\cong\order{B}_\rt$
  and~$r_{C,*}\order{B}_\rt\cong\order{B}$,
  and equivalences of categories
  \begin{equation}
    \label{equation:restriction-equivalences}
    \coh(C,\order{B})
    \simeq \coh(\stack{C}_\rt,\order{B}_\rt)
    \simeq \coh(\stack{C}_\can,\order{B}_\can).
  \end{equation}
\end{theorem}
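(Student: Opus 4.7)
My plan is to deduce both the algebra isomorphisms and the equivalences of categories by restricting the surface-level statements of \cref{theorem:faber-ingalls-okawa-satriano} along the closed immersions in \eqref{equation:restriction-diagram}. The strategy is largely formal: the two adjoint pairs that realise \eqref{equation:fios-equivalence} are compatible with annihilation by central ideal sheaves, and so descend to the fiber products~$\stack{C}_\rt$ and~$\stack{C}_\can$.

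First I would verify the isomorphisms~$r_{C,*}\order{B}_\rt\cong\order{B}$ and~$c_{C,*}\order{B}_\can\cong\order{B}_\rt$ locally using the descriptions collected in \cref{subsection:dictionary-dim-2}. Near any point~$p\in S$, the root stack pulls back to~$[\Spec R_e/\mu_e]$ and the order~$\order{A}_\rt$ to~$\Lambda_e = \bigoplus_{i=0}^{e-1}\ramification(\Lambda/R)^{(i)}T^i$, on which~$r_*$ corresponds to taking~$\mu_e$-invariants, i.e., extracting the degree-zero summand~$\Lambda$. The curve~$C$ is locally cut out by some central~$f\in R$, and~$f$ acts by central multiplication on each graded piece of~$\Lambda_e$, so
\begin{equation}
  \Lambda_e/f\Lambda_e = \bigoplus_{i=0}^{e-1}\bigl(\ramification(\Lambda/R)^{(i)}/f\cdot\ramification(\Lambda/R)^{(i)}\bigr)T^i,
\end{equation}
and the~$\mu_e$-invariant part is exactly~$\Lambda/f\Lambda = \order{B}$. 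The same argument using the local presentation~$[\Spec T/H]\to\Spec R_e$ of the canonical stack, recalled around \eqref{eq:cancomlocequiv}, yields~$c_{C,*}\order{B}_\can\cong\order{B}_\rt$ after taking~$H$-invariants.

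For the categorical equivalences I would restrict each of the equivalences in \eqref{equation:fios-equivalence} to the full subcategories of modules supported on the central curve. Closed-immersion pushforward identifies~$\coh(C,\order{B})$ with the full subcategory of~$\coh(S,\order{A})$ consisting of modules annihilated by~$\mathcal{I}_C$, and similarly for the two stacks using~$\mathcal{I}_{\stack{C}_\rt}$ and~$\mathcal{I}_{\stack{C}_\can}$. Because the squares in \eqref{equation:restriction-diagram} are Cartesian, one has~$\mathcal{I}_{\stack{C}_\rt} = r^*\mathcal{I}_C\cdot\mathcal{O}_{\stack{S}_\rt}$ and~$\mathcal{I}_{\stack{C}_\can} = c^*\mathcal{I}_{\stack{C}_\rt}\cdot\mathcal{O}_{\stack{S}_\can}$. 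The pullback-tensor functor sends modules annihilated by~$\mathcal{I}_C$ to ones annihilated by~$r^*\mathcal{I}_C$ and hence by~$\mathcal{I}_{\stack{C}_\rt}$; conversely, if~$\mathcal{M}$ on~$\stack{S}_\rt$ is annihilated by~$\mathcal{I}_{\stack{C}_\rt}$, then the projection formula together with~$r_*\mathcal{O}_{\stack{S}_\rt}=\mathcal{O}_S$ gives~$\mathcal{I}_C\cdot r_*\mathcal{M} = r_*(r^*\mathcal{I}_C\cdot\mathcal{M})=0$. So the ambient equivalence restricts to the supported subcategories, and the analogous argument goes through verbatim for the adjunction~$(c^*(-)\otimes_{c^*\order{A}_\rt}\order{A}_\can,c_*)$.

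The main obstacle is simply keeping track of the ideal-annihilation conditions through the non-representable morphisms~$r$ and~$c$. Once one notices that both maps satisfy the projection formula, have structure sheaf pushing forward to the structure sheaf of the target (since each target is a coarse moduli space of the source in the relevant sense), and that the central curve~$C$ is defined by an ideal in the center, the argument is formal. Combining the restricted equivalences with the algebra isomorphisms from the first step — or equivalently tracking~$\order{B}_\rt$ and~$\order{B}_\can$ as the images of~$\order{B}$ under the restricted functors — yields both halves of the theorem.
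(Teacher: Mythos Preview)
Your proposal is correct and follows essentially the same route as the paper: identify $\coh(C,\order{B})$, $\coh(\stack{C}_\rt,\order{B}_\rt)$, and $\coh(\stack{C}_\can,\order{B}_\can)$ with the full subcategories of objects annihilated by the pullback of $\mathcal{I}_C$, and then observe that the ambient equivalences of \cref{theorem:faber-ingalls-okawa-satriano} are $\mathcal{O}_S$-linear and hence restrict. The one place where the paper is more careful than your sketch is the isomorphism $c_{C,*}\order{B}_\can\cong\order{B}_\rt$: rather than saying ``the same argument'' applies, the paper runs the short exact sequence $0\to\Gamma\xrightarrow{a\cdot}\Gamma\to\Gamma/J\Gamma\to 0$ (using that $S$ smooth makes $C$ locally principal and $a$ a non-zerodivisor on the torsion-free $\Gamma$) and invokes $\Ext^1_{T*H}(T,\Gamma)=0$ to conclude $(\Gamma/J\Gamma)^H\cong\Lambda_e/J\Lambda_e$---which is exactly the exactness of $H$-invariants in characteristic zero that your ``take $H$-invariants'' step tacitly uses.
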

The precise shape of the functors is given in the proof.

We first consider the restriction of the root stack construction.
\begin{proposition}
  \label{proposition:root-stack-restriction}
  There is an isomorphism~$\order{B} \cong r_{C,*}\order{B}_\rt$ which induces an equivalence
  \begin{equation}
    \coh(C,\order{B}) \simeq \coh(\stack{C}_\rt,\order{B}_\rt).
  \end{equation}
\end{proposition}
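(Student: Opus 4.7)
The plan is to work locally using the generalized Rees algebra description of~$r\colon \stack{S}_\rt \to S$ from \cref{subsection:dictionary-dim-2}, and to globalize by gluing.

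For the isomorphism~$r_{C,*}\order{B}_\rt \cong \order{B}$, I would restrict to a sufficiently small affine chart~$U = \Spec R \subset S$ on which~$\Lambda = \Gamma(U,\order{A})$ is a tame $R$-order with principal ramification primes. Writing~$I \subset R$ for the ideal cutting out~$C \cap U$, the fiber product description gives~$\stack{C}_\rt \times_S U \cong [\Spec(\widetilde{R}/I\widetilde{R})/\Gm]$ and~$\order{B}_\rt|_{\stack{C}_\rt \times_S U}$ corresponds to the~$\mathbb{Z}$-graded algebra~$\widetilde{\Lambda} \otimes_R R/I \cong \widetilde{\Lambda}/I\widetilde{\Lambda}$. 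Since~$r_*$ corresponds to extracting the degree-zero part of a graded module, a direct computation gives
\begin{equation*}
  r_{C,*}\order{B}_\rt|_{U\cap C} \cong (\widetilde{\Lambda}/I\widetilde{\Lambda})_0 \cong \Lambda/I\Lambda \cong \order{B}|_{U\cap C},
\end{equation*}
and these local identifications glue to the desired global isomorphism.

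For the equivalence of categories, I would verify that the pair of functors~$\bigl(r_C^*(-)\otimes_{r_C^*\order{B}}\order{B}_\rt,\ r_{C,*}\bigr)$ is an adjoint equivalence by checking that the unit and counit are isomorphisms. Locally this reduces to the statement that for any~$\Lambda/I\Lambda$-module~$\cM$ and any graded~$\widetilde{\Lambda}/I\widetilde{\Lambda}$-module~$\cN$, the natural maps
\begin{equation*}
  \cM \longrightarrow \bigl(\cM \otimes_{\Lambda/I\Lambda}(\widetilde{\Lambda}/I\widetilde{\Lambda})\bigr)_{0} \quad\text{and}\quad \cN_{0} \otimes_{\Lambda/I\Lambda}(\widetilde{\Lambda}/I\widetilde{\Lambda}) \longrightarrow \cN
\end{equation*}
are isomorphisms. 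These descend from the surface-case unit and counit, since for modules annihilated by~$I$ the tensor products over~$\Lambda$ and over~$\Lambda/I\Lambda$ agree, and likewise for~$\widetilde{\Lambda}$ and~$\widetilde{\Lambda}/I\widetilde{\Lambda}$. Gluing across local charts yields the global equivalence.

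The main technical obstacle is verifying that the fiber product diagram is compatible with the generalized Rees algebra construction: one needs that~$\order{A}_\rt|_{\stack{C}_\rt}$ locally corresponds precisely to~$\widetilde{\Lambda}/I\widetilde{\Lambda}$, equivalently that~$\widetilde{\Lambda} \otimes_R R/I$ has no higher Tor contributions. This holds because each graded summand~$\ramification(\Lambda/R)^{(n)}$ of~$\widetilde{\Lambda}$ is reflexive, hence locally free, over the regular two-dimensional ring~$R$, so that~$\widetilde{\Lambda}$ is flat over~$R$. With this in place, the adjunction machinery of~\cite{2206.13359} for the surface case transfers essentially formally to the restriction.
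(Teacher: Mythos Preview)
Your proposal is correct and follows essentially the same route as the paper: describe~$\stack{C}_\rt$ via the graded Rees picture, compute~$r_{C,*}\order{B}_\rt$ as the degree-zero part of~$\widetilde{\order{A}}/\mathcal{I}\widetilde{\order{A}}$, and verify that the unit and counit of the adjunction are isomorphisms by reducing to the surface equivalence of \cite[Lemma~4.4]{2206.13359}.

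Two minor remarks. First, the paper carries out the argument globally rather than chart-by-chart: it identifies an $\mathcal{I}$-torsion object~$\mathcal{N}$ of~$\coh^{\mathbb{Z}}(C,\widetilde{\order{B}})$ with the corresponding object of~$\coh^{\mathbb{Z}}(S,\widetilde{\order{A}})$ and writes down a single commutative square comparing the counit on~$C$ with the counit on~$S$. This avoids the gluing step entirely, though your local approach is of course equivalent. Second, your closing paragraph about higher Tor is not needed for the statement at hand: the restriction~$\order{A}_\rt|_{\stack{C}_\rt}$ is by definition the underived pullback, so it is literally~$\widetilde{\Lambda}\otimes_R R/I = \widetilde{\Lambda}/I\widetilde{\Lambda}$ on each chart, with no flatness required. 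Your observation that~$\widetilde{\Lambda}$ is flat over~$R$ (via reflexive implies locally free on a regular surface) is correct, but it plays no role here.
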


\begin{proof}
  Recall that the root stack~$\stack{S}_\rt$ is defined as the~$\Gm$-quotient of the affine map~$r'$ in \eqref{equation:root-stack-S},
  defined by the~$\mathbb{Z}$-graded~$\mathcal{O}_S$-algebra~$\mathcal{O}_{\widetilde{S}} = \ZZ(\widetilde{\order{A}})$.
  Writing~$\mathcal{I}\triangleleft\mathcal{O}_S$ for the sheaf of ideals associated to $C$,
  the base change of~$r'$ along~$C \hookrightarrow S$ is again an affine map
  \begin{equation}
    r'_C\colon \widetilde{C} = \relSpec_C \mathcal{O}_{\widetilde{S}}/\mathcal{I}\mathcal{O}_{\widetilde{S}} \to C,
  \end{equation}
  and the fibre product~$\stack{S}_\rt\times_S C$ is given by the quotient~$r_C\colon \stack{C}_\rt = [\widetilde{C}/\Gm] \to C$.
  The pushforward along~$r_{C,*}$ is given by the composition
  \begin{equation}
    \coh^{\Gm}(\widetilde C)
    \xrightarrow[\sim]{\ r'_{C,*}\ }
    \coh^{\mathbb{Z}}(C,\mathcal{O}_{\widetilde{S}}/\mathcal{I}\mathcal{O}_{\widetilde{S}})
    \xrightarrow{(-)_0}
    \coh^{\mathbb{Z}}(C)
  \end{equation}
  sending an equivariant sheaf on~$\widetilde C$ to the degree-$0$ part of the corresponding~$\cO_{\widetilde S}/\mathcal{I}\cO_{\widetilde S}$-module on $C$.
  Along this pushforward, the restricted algebra~$\order{B}_\rt \in \coh \stack{C}_\rt = \coh^\Gm(\widetilde C)$ maps to
  \begin{equation}
    r_{C,*}\order{B}_\rt
    = (r'_{C,*}\order{B}_\rt)_0
    = (\widetilde{\order{A}}/\mathcal{I}\widetilde{\order{A}})_0
    = \widetilde{\order{A}}_0/\mathcal{I} \widetilde{\order{A}}_0
    \cong \order{A}/\mathcal{I}\order{A}
    = \order{B}.
  \end{equation}
  Hence, if we denote~$\widetilde{\order{B}} = \widetilde{\order{A}}/\mathcal{I}\widetilde{\order{A}}$
  we obtain two adjoint functors as in \cite[Lemma 4.4]{2206.13359}:
  \begin{equation}
    \begin{tikzcd}[column sep=3cm]
      \coh(C,\order{B}) \arrow[r,shift left, "G = (-)\otimes_{\order{B}} \widetilde{\order{B}}"]
      & \arrow[l,shift left,"F= (-)_0"]
      \coh^{\mathbb{Z}}(C,\widetilde{\order{B}})
      \simeq \coh^{\Gm}(\stack{C}_\rt,\order{B}_\rt).
    \end{tikzcd}
  \end{equation}
  Similar to \cite[Lemma 4.4]{2206.13359}, we already know that for a module~$\mathcal{M} \in \coh(C,\order{B})$
  the natural map~$\mathcal{M} \to F(G(\mathcal{M}))$ is an isomorphism.
  Conversely, identifying an object~$\mathcal{N} \in \coh^{\mathbb{Z}}(C,\widetilde{\order{B}})$
  with the corresponding~$\mathcal{I}$-torsion object
  of~$\coh^{\mathbb{Z}}(S,\widetilde{\order{A}})$,
  the natural map~$G(F(\mathcal{N})) \to \mathcal{N}$ fits into a commutative diagram
  \begin{equation}
    \begin{tikzcd}
      G(F(\mathcal{N})) = \mathcal{N}_0 \otimes_{\order{B}} \widetilde{\order{B}} \arrow[r] \arrow[d, "\sim"]
      & \mathcal{N} \arrow[d, "\sim"] \\
      (\mathcal{N} \otimes_{\widetilde{\order{A}}} \widetilde{\order{B}})_0 \otimes_{\order{A}} \widetilde{\order{A}} \arrow[r]
      & \mathcal{N} \otimes_{\widetilde{\order{A}}} \widetilde{\order{B}}
    \end{tikzcd}
  \end{equation}
  It is shown in \cite[Lemma 4.4]{2206.13359} that the bottom map is an isomorphism, hence so is the top.
\end{proof}

For the canonical stack we first consider the complete local description in \Cref{subsection:dictionary-dim-2}
where~$\Gamma$ is an~$H$-equivariant order on~$\Spec T$
for~$T=\field[\![x,y]\!]$
obtained from an order~$\Lambda_e$
on a complete local chart~$\Spec R_e$ of the root stack.
We have the following.
\begin{lemma}
  \label{lemma:canonical-local-slice}
  If $J\subset R_e = T^H$ is an ideal,
  then \eqref{eq:cancomlocequiv} induces an equivalence
  \begin{equation}
    \begin{tikzcd}[column sep=4cm]
      \mmod^H \Gamma/J\Gamma
      \arrow[r, shift left, "{\Hom_{\Gamma\!/\!J}^H(\Gamma\!/\!J\Gamma,-)}"]
      & \mmod \Lambda_e/J\Lambda_e.
      \arrow[l, shift left, "{-\otimes_{\Lambda_e\!/\!J\Lambda_e} \Gamma\!/\!J\Gamma}"]
    \end{tikzcd}
  \end{equation}
\end{lemma}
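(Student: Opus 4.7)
The plan is to identify $\mmod^H \Gamma/J\Gamma$ and $\mmod \Lambda_e/J\Lambda_e$ with the full subcategories of $J$-torsion objects inside $\mmod^H \Gamma$ and $\mmod \Lambda_e$, and to show that the equivalence \eqref{eq:cancomlocequiv} restricts to these subcategories. Once this is established, the restricted functors are automatically the ones in the statement, since the passage from a category of $J$-torsion modules to modules over the $J$-quotient algebra is canonical.

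The first step is to verify that $J\Lambda_e$ and $J\Gamma$ are two-sided ideals. Since $\Lambda_e$ is an $R_e$-order, one has $R_e \subset \ZZ(\Lambda_e)$, so $J\Lambda_e$ is automatically two-sided. For $\Gamma = (\Lambda_e \otimes_{R_e} T)^{**}$, the elements of $T$, and hence of $R_e = T^H \subset T$, commute with everything in $\Lambda_e \otimes_{R_e} T$ and therefore in its reflexive hull, so $R_e \subset \ZZ(\Gamma)$ and $J\Gamma$ is two-sided as well. The ideal $J\Gamma$ is moreover $H$-stable because $J \subset R_e = T^H$ is $H$-invariant, so $\mmod^H \Gamma/J\Gamma$ is well-defined and identified with the full subcategory of $\mmod^H \Gamma$ consisting of modules $N$ with $JN=0$; similarly $\mmod \Lambda_e/J\Lambda_e$ consists of those $M$ with $JM=0$.

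The second step is to show that both functors of \eqref{eq:cancomlocequiv} preserve $J$-torsion. For $M \in \mmod \Lambda_e$ with $JM = 0$, the tensor product $M \otimes_{\Lambda_e} \Gamma$ is annihilated by $J$, since $J \subset \ZZ(\Lambda_e)$ acts via the left factor. Conversely, for $N \in \mmod^H \Gamma$ with $JN = 0$, the $\Lambda_e$-module $\Hom^H_\Gamma(\Gamma, N) = N^H$ is annihilated by $J$, because the $\Lambda_e$-action on $N^H$ factors through the $\Gamma$-action on $N$ via the canonical map $\Lambda_e \to \Gamma$, $\lambda \mapsto \lambda \otimes 1$, and $J$ already acts as zero on $N$ through $R_e \hookrightarrow \ZZ(\Gamma)$.

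Finally, since the unit and counit of \eqref{eq:cancomlocequiv} are natural isomorphisms, they restrict to the $J$-torsion subcategories to give the claimed equivalence, where under the identifications above the restricted functors become $-\otimes_{\Lambda_e/J\Lambda_e} \Gamma/J\Gamma$ and $\Hom^H_{\Gamma/J\Gamma}(\Gamma/J\Gamma,-)$, matching the statement. The only conceptually nontrivial ingredient is the centrality of $R_e$ inside $\Gamma$, which is needed both to make sense of $J\Gamma$ as a two-sided ideal and to transport $J$-torsion through the two functors; beyond this the argument is essentially formal.
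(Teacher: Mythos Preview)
Your proof is correct and follows essentially the same approach as the paper: both identify the quotient module categories with the $J$-torsion subcategories and show the equivalence \eqref{eq:cancomlocequiv} restricts to them via the centrality of $R_e$ in both $\Lambda_e$ and $\Gamma$. The paper phrases this by first unpacking \eqref{eq:cancomlocequiv} through the chain $\mmod^H\Gamma \simeq \mmod\End_{\Lambda_e}(\Gamma) \simeq \mmod\Lambda_e$ (using that $\Gamma$ is a projective $\Lambda_e$-module with $\Lambda_e$ as a summand) to make the $R_e$-linearity transparent, whereas you verify directly that each functor preserves $J$-torsion; the substance is the same.
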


\begin{proof}
  Using \cite[Proposition 2.26]{2206.13359}
  the inclusion~$\Lambda_e \to (\Lambda_e \otimes_{R_e} T)^{**} = \Gamma$
  induces an isomorphism~$\Lambda_e \cong \Gamma^H$,
  and via this identification there are isomorphisms
  \begin{equation}
    \Gamma * H \cong \End_{\Gamma^H}(\Gamma) \cong \End_{\Lambda_e}(\Gamma).
  \end{equation}
  These isomorphisms are~$R_e$-linear along the isomorphism~$R_e \cong T^H$,
  and therefore they induce $R_e$-linear equivalences
  \begin{equation}
    \begin{tikzcd}[column sep=huge]
      \mmod^H \Gamma
      \arrow[r,"{\Hom_\Gamma^H(\Gamma,-)}"]&
      \mmod \End_{\Lambda_e}(\Gamma)
      \arrow[r,"\sim"]&
      \mmod \Lambda_e,
    \end{tikzcd}
  \end{equation}
  where the second equivalence is the Morita equivalence,
  using the fact that~$\Gamma$ is a projective~$\Lambda_e$-module which contains~$\Lambda_e$ as a direct summand.
  It follows from~$R_e$-linearity that these equivalences restrict to the subcategories of $J$-torsion objects
  \begin{equation}
    \mmod^H \Gamma/J\Gamma \simeq \{ M \in \mmod^H \Gamma \mid JM = 0\},\quad
    \mmod \Lambda_e/J\Lambda_e \simeq \{ M \in \mmod \Lambda_e \mid JM = 0\}
  \end{equation}
  for any ideal~$J\triangleleft R_e \cong T^H$.
\end{proof}

This allows us to discuss the restriction of the canonical stack construction.

\begin{proposition}
  \label{proposition:canonical-stack-restriction}
  There is an isomorphism~$\order{B}_\rt \cong c_{C,*}\order{B}_\can$
  which induces an equivalence
  \begin{equation}
    \coh(\stack{C}_\rt,\order{B}_\rt)\simeq \coh(\stack{C}_\can,\order{B}_\can)
  \end{equation}
\end{proposition}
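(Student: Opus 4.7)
The plan is to reduce to the complete local description of $c \colon \stack{S}_\can \to \stack{S}_\rt$ recalled in \cref{subsection:dictionary-dim-2} and then apply \cref{lemma:canonical-local-slice}. Around any point of $\stack{S}_\rt$ we have an étale chart $\Spec R_e$ together with the presentation of the canonical stack as $[\Spec T/H]$ with $R_e = T^H$, on which $\order{A}_\rt$ corresponds to $\Lambda_e$ and $\order{A}_\can$ to the reflexive hull $\Gamma = (\Lambda_e\otimes_{R_e}T)^{**}$. Let $J \triangleleft R_e$ be the ideal cutting out~$C$ on this chart, coming from the ideal sheaf of $C \subset S$. Since $J \subset R_e \subset T$, the restricted orders correspond locally to $\order{B}_\rt \leftrightarrow \Lambda_e/J\Lambda_e$ on $V(J) \subset \Spec R_e$ and $\order{B}_\can \leftrightarrow \Gamma/J\Gamma$ on $[\Spec(T/JT)/H]$.

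First I would invoke \cref{lemma:canonical-local-slice} for the ideal $J$ directly: it yields a local equivalence $\mmod^H(\Gamma/J\Gamma) \simeq \mmod(\Lambda_e/J\Lambda_e)$, which translates into an equivalence between coherent sheaves over $\stack{B}_\can$ and $\stack{B}_\rt$ on the chart. The lemma is tailored for this use, so this step is immediate.

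Next I would verify that $c_{C,*}\order{B}_\can \cong \order{B}_\rt$ locally. Pushforward along the local map $[\Spec(T/JT)/H] \to \Spec R_e/J$ is the $H$-invariants functor, so the question reduces to identifying $(\Gamma/J\Gamma)^H$ with $\Lambda_e/J\Lambda_e$. Because $H$ is finite cyclic and we work in characteristic zero, invariants are exact, so from $0 \to J\Gamma \to \Gamma \to \Gamma/J\Gamma \to 0$ we obtain $(\Gamma/J\Gamma)^H \cong \Gamma^H/(J\Gamma)^H$. Using $\Gamma^H \cong \Lambda_e$ (as recalled in the proof of \cref{lemma:canonical-local-slice}), it suffices to verify $(J\Gamma)^H = J\Lambda_e$; this follows from the Reynolds operator together with the fact that $J \subset R_e = T^H$ is already $H$-invariant, so any $H$-invariant element of $J\Gamma$ can be averaged term-by-term into $J \cdot \Gamma^H = J\Lambda_e$.

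Finally I would globalize. The local equivalences and the local isomorphisms $c_{C,*}\order{B}_\can \cong \order{B}_\rt$ must be glued along the étale cover of $\stack{S}_\rt$ by complete local charts, yielding an equivalence of the form \eqref{eq:cancatequiv} restricted to $C$. The main obstacle will be precisely this gluing step: one must check that the equivalences produced by \cref{lemma:canonical-local-slice} on overlapping charts are compatible with the étale descent used to globalize the canonical-stack equivalence in \cite{2206.13359}. This should be essentially routine, since the restriction functor commutes with étale base change and the only data entering the lemma is the pair $(\Lambda_e,\Gamma)$, which is simply replaced on each chart by its quotient $(\Lambda_e/J\Lambda_e, \Gamma/J\Gamma)$; thus the descent datum for the unrestricted equivalence restricts to a descent datum for the curve version.
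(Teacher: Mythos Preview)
Your local computations are correct, and the Reynolds-operator argument for $(\Gamma/J\Gamma)^H \cong \Lambda_e/J\Lambda_e$ is a valid alternative to what the paper does (it instead uses the short exact sequence $0 \to \Gamma \xrightarrow{a\cdot} \Gamma \to \Gamma/J\Gamma \to 0$, available because $J=(a)$ is principal on a smooth surface, together with projectivity of $T$ as a $T*H$-module to kill the relevant $\Ext^1$).

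The substantive difference is in globalization, and here the paper's route is both simpler and more robust than yours. Rather than glue local equivalences---which, as you rightly flag, would require controlling descent data for categories and is not merely ``routine''---the paper observes that a global equivalence $\coh(\stack{S}_\can,\order{A}_\can) \simeq \coh(\stack{S}_\rt,\order{A}_\rt)$ \emph{already exists} by \cref{theorem:faber-ingalls-okawa-satriano}. One then uses \cref{lemma:embedding-as-torsion} to identify $\coh(\stack{C}_\can,\order{B}_\can)$ and $\coh(\stack{C}_\rt,\order{B}_\rt)$ with the full subcategories of $c^*r^*\mathcal{I}$-torsion and $r^*\mathcal{I}$-torsion objects in the ambient categories, so that the only thing left to check is that the existing global equivalence preserves these torsion subcategories. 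That is a \emph{property}, not a structure, and hence can be verified on complete local charts; this is exactly what \cref{lemma:canonical-local-slice} provides. No gluing of equivalences is needed.

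Similarly, for the sheaf isomorphism the paper first writes down a single global base-change morphism $\order{B}_\rt = j^*c_*\order{A}_\can \to c_{C,*}i^*\order{A}_\can = c_{C,*}\order{B}_\can$ and then checks on complete local charts that it is an isomorphism. Having a global map from the outset again reduces the task to verifying a local property, rather than gluing local isomorphisms into a global one.
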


\begin{proof}
  By the construction in \eqref{equation:restriction-diagram},
  the stacky curve in the canonical stack is given by the fiber product
  \begin{equation}
    \begin{tikzcd}
      \stack{C}_\can \arrow[r, "i"] \arrow[d, "c_C"] & \stack{S}_\can \ar[d, "c"] \\
      \stack{C}_\rt \arrow[r, "j"] & \stack{S}_\rt
    \end{tikzcd}
  \end{equation}
  The usual adjunctions yield a canonical base-change map~$j^*c_*\mathcal{F} \to c_{C,*}i^*\mathcal{F}$
  for any sheaf~$\mathcal{F}$ on~$\stack{S}_\can$.
  In particular, since~$c_*\order{A}_\can = \order{A}_\rt$ by \cref{theorem:faber-ingalls-okawa-satriano},
  we find a canonical morphism
  \begin{equation}
    \label{equation:base-change-morphismh}
    \order{B}_\rt = j^*c_*\order{A}_\can \to c_{C,*}i^*\order{A}_\can = c_{C,*}\order{B}_\can.
  \end{equation}
  To check that this is an isomorphism, it suffices to check it is an isomorphism
  on complete local neighbourhoods of points~$p\in C \subset S$ in~$S$.
  For any such point we can consider the root stack over a complete local chart
  \begin{equation}
    [\Spec R_e/\mu_e] \to \Spec R,
  \end{equation}
  and let~$J\subset R$ be the ideal cutting out~$C$.
  Then the stacky curve~$\mathcal{C}_\rt$ restricts to the quotient stack
  \begin{equation}
    \left[\Spec R_e / \mu_e\right] \times_{\Spec R} \Spec(R/JR) \cong \left[\Spec(R_e/J R_e) / \mu_e\right].
  \end{equation}
  Base changing along the \'etale cover~$\Spec R_e$,
  the map~$c_C\colon\stack{C}_\can\to\stack{C}_\rt$
  restricts to
  \begin{equation}
    \left[\Spec(T/J T) /H\right] \to \Spec (R_e/JR_e).
  \end{equation}
  In particular, the sheaves of algebras~$\order{B}_\can = \order{A}_\can|_{\stack{C}_\can}$
  and~$\order{B}_\rt = \order{A}_\rt|_{\stack{C}_\rt}$
  restrict on~$\Spec T/JT$ resp.~$\Spec R_e/JR_e$
  to the quotients~$\Gamma/J\Gamma$ and~$\Lambda_e/J\Lambda_e$.
  To describe the map \eqref{equation:base-change-morphismh},
  first note that~$J$ is a principal ideal~$J=(a)$ since~$C$ is a divisor on a smooth surface~$S$.
  We therefore find a short exact sequence
  \begin{equation}
    \label{equation:short-exact-sequence-Gamma-J}
    0 \to \Gamma \xrightarrow{a\cdot} \Gamma \to \Gamma/J\Gamma \to 0,
  \end{equation}
  where we note that multiplication by~$a$ is injective
  since~$\Gamma$ is torsion-free and~$T$ is a domain.
  On this local chart the direct image is given
  by the functor~$\Hom_{T*H}(T,-) \colon \mmod T*H \to \mmod T^H = \mmod R_e$,
  and applying it to \eqref{equation:short-exact-sequence-Gamma-J} yields the long exact sequence
  \begin{equation}
    0 \to \Hom_{T*H}(T,\Gamma) \xrightarrow{a\cdot} \Hom_{T*H}(T,\Gamma) \to \Hom_{T*H}(T,\Gamma/J\Gamma) \to \Ext^1_{T*H}(T,\Gamma) \to \ldots
  \end{equation}
  Because~$H$ is a finite group,~$T$ is projective as a~$T*H$-module
  and therefore~$\Ext^1_{T*H}(T,\Gamma) = 0$.
  By \cite[Lemma 4.3]{2206.13359} there is moreover
  a canonical isomorphism~$\Hom_{T*H}(T,\Gamma) \cong \Lambda_e$
  and we therefore obtain an exact sequence
  \begin{equation}
    0\to \Lambda_e \xrightarrow{a\cdot} \Lambda_e \to \Hom_{T*H}(T,\Gamma/J\Gamma) \to 0,
  \end{equation}
  which implies that~$\Hom_T^H(T,\Gamma/J\Gamma)\cong \Lambda_e/J\Lambda_e$.
  We conclude that~$\order{B}_\rt \cong c_{C,*}\order{B}_\can$.

  For the second statement,
  using \cref{lemma:embedding-as-torsion}
  the full subcategories~$\coh(\stack{C}_\rt,\order{B}_\rt) \subset \coh(\mathcal{S}_\rt,\mathcal{A}_\rt)$
  and~$\coh(\stack{C}_\can,\order{B}_\can) \subset \coh(\mathcal{S}_\can,\mathcal{A}_\can)$
  are identified
  with the subcategories of~$r^*\mathcal{I}$-torsion and~$c^*r^*\mathcal{I}$-torsion objects,
  where~$\mathcal{I}$ is the defining sheaf of ideals of~$C$ in~$S$.
  It then suffices to show that the inverse pair of functors
  \begin{equation}
    G\colon \coh(\stack{S}_\rt,\order{A}_\rt) \leftrightarrow \coh(\stack{S}_\can,\order{A}_\can) \ :\! F,
  \end{equation}
  preserve these subcategories of torsion objects.
  It again suffices to check this complete locally,
  and the result therefore follows from \Cref{lemma:canonical-local-slice}.
\end{proof}

\begin{proof}[Proof of \cref{theorem:restriction}]
  This follows by combining \cref{proposition:root-stack-restriction} and \cref{proposition:canonical-stack-restriction}.
\end{proof}

\subsection{Properties of the restriction}
\label{subsection:properties-restriction}
We will now study the properties of the sheaf of algebras~$\order{A}|_C$.
We are mostly interested in the case where~$\order{A}|_C$ is an order,
as the geometry of orders is well-understood.
The following is
the more precise version of \cref{theorem:restriction-properties-introduction}\cref{enumerate:restriction-order}.
\begin{proposition}
  \label{proposition:restriction-order}
  Let~$S,\mathcal{A},C$ be as in \cref{theorem:restriction},
  and assume moreover that~$C$ is integral.
  The sheaf of algebras~$\order{A}|_C$ is an order if and only if~$C$ is not contained in~$\Delta$.
  Moreover, when~$\order{A}|_C$ is an order,
  \begin{itemize}
    \item the restriction of~$\order{A}|_C$ to the dense open Azumaya locus is split Azumaya,
    \item the discriminant of~$\order{A}|_C$ is~$C\cap\Delta$.
  \end{itemize}
\end{proposition}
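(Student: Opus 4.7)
The plan is to proceed in four steps, reducing each assertion to an étale-local or complete-local statement and then invoking the normal forms from \cref{subsection:dictionary-dim-1,subsection:dictionary-dim-2}. As a first step, I would observe that $\order{A}$ is reflexive on the smooth surface $S$ and hence, by Auslander--Buchsbaum, locally free; this makes $\order{A}|_C=\order{A}\otimes_{\mathcal{O}_S}\mathcal{O}_C$ locally free and in particular torsion-free on $C$.

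To characterize when $\order{A}|_C$ is an order, I would analyze its stalk at the generic point $\eta$ of $C$, where $\mathcal{O}_{S,\eta}$ is a discrete valuation ring. If $C\not\subset\Delta$ then $\order{A}_\eta$ is Azumaya and its reduction modulo the uniformiser is a central simple algebra over $K(C)$, which together with torsion-freeness yields an order structure. If instead $C\subset\Delta$, then $\order{A}_\eta$ is a ramified hereditary order and the normal form \eqref{equation:normal-form-hereditary-order} shows that its reduction is upper-block-triangular over $K(C)$ with nonzero nilpotent strictly-upper-triangular ideal, hence not semisimple, so $\order{A}|_C$ is not an order.

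Assuming $\order{A}|_C$ is an order, let $U\subset C$ denote its Azumaya locus; this is open and dense since $(\order{A}|_C)_\eta$ is already Azumaya. I would invoke Tsen's theorem: $K(C)$ has transcendence degree one over the algebraically closed field $\field$, so $\Br K(C)=0$ and the Brauer class of $\order{A}|_U$ vanishes, yielding $\order{A}|_U\cong\End_{\mathcal{O}_U}(\mathcal{E})$ for some vector bundle $\mathcal{E}$ on $U$.

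For the discriminant assertion I would identify the non-Azumaya locus of $\order{A}|_C$ with $C\cap\Delta$ set-theoretically. The inclusion $C\setminus\Delta\subset U$ is automatic since Azumaya-ness is preserved under base change. For the reverse I would fix $p\in C\cap\Delta$ and pass to the completion, writing $\hat{R}\colonequals\widehat{\mathcal{O}}_{S,p}$, denoting by $\hat{\Lambda}$ the completion of $\order{A}$ at $p$, and letting $g\in\hat{R}$ be a local equation of $C$. Expressing $\hat{\Lambda}$ in the normal form around the ramification components of $\Delta$ through $p$, the assumption $C\not\subset\Delta$ forces $g$ to lie outside the ideal of any such component, so the off-diagonal ramification entries survive in $\hat{\Lambda}/g\hat{\Lambda}$, and its further reduction modulo the maximal ideal of $\widehat{\mathcal{O}}_{C,p}$ is upper-block-triangular but not simple, witnessing failure of Azumaya-ness. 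The main obstacle will be handling points $p\in C\cap\Delta^{\sing}$ or non-transverse intersections with $\Delta^{\smooth}$, where the local normal form involves several ramification components simultaneously (or is described only via the canonical cover) and one must carefully track which off-diagonal blocks remain nonzero after quotienting by $g$.
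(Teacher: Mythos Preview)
Your argument is essentially correct but considerably more laborious than the paper's, and the ``main obstacle'' you flag at the end is one the paper sidesteps entirely. The paper does not use étale-local or complete-local normal forms at all: the key tool is the fiberwise characterisation of Azumaya algebras \cite[Proposition~IV.2.1(b)]{MR559531}, which says that a locally free $\mathcal{O}_S$-algebra is Azumaya at $p$ if and only if the fibre $\order{A}(p)=\order{A}_p\otimes k(p)$ is central simple over $k(p)$. Since the fibre at a closed point $p\in C$ is the same whether computed in $S$ or in $C$, the Azumaya locus of $\order{A}|_C$ is exactly $C\cap(S\setminus\Delta)$, which in one stroke gives both the discriminant statement and the ``only if'' direction for being an order (if $C\subset\Delta$ then no fibre is central simple, so $\order{A}|_C$ is nowhere Azumaya). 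No case distinction between $\Delta^{\smooth}$ and $\Delta^{\sing}$, and no tracking of off-diagonal blocks in completions, is needed. Your approach has the virtue of being more explicit and would ultimately succeed, but it trades a one-line citation for a genuine computation at singular points of $\Delta$.

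There is also a small gap in your splitting argument. Tsen's theorem gives $\Br K(C)=0$, but to conclude that $\order{A}|_U\cong\End_{\mathcal{O}_U}(\mathcal{E})$ you need $\Br(U)=0$, and the injection $\Br(U)\hookrightarrow\Br K(C)$ is only automatic when $U$ is regular. Since $C$ is merely assumed integral, $U$ may be singular. The paper handles this by citing the stronger fact that the Brauer group of \emph{any} curve over an algebraically closed field vanishes \cite[Proposition~7.3.2]{MR4304038}, which covers singular curves directly.
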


\begin{proof}
  Let~$U\colonequals S\setminus\Delta$ be the Azumaya locus of~$\order{A}$.
  If~$C$ is not contained in~$\Delta$,
  the restriction~$\order{A}|_C$ is Azumaya on the non-empty Zariski-open subset~$C\cap U\subseteq C$,
  so that~$\order{A}|_C$ is an order in~$\order{A}\otimes_S\field(C)$.

  Conversely,
  if~$C$ is contained in~$\Delta$,
  then by the fiberwise characterization of Azumaya algebras \cite[Proposition~IV.2.1(b)]{MR559531},
  the fibers of~$\order{A}$ at~$p\in C$ are not central simple algebras
  (because otherwise~$C\cap U\neq\emptyset$),
  so~$\order{A}|_C$ cannot be Azumaya on some dense open.

  For the first point in the moreover part, it suffices to observe that
  the Brauer group of \emph{any} curve over an algebraically closed field is trivial,
  by \cite[Proposition~7.3.2]{MR4304038}.
  The second point again follows from the fiberwise characterization \cite[Proposition~IV.2.1(b)]{MR559531},
\end{proof}

The following proposition is a more precise version
of \cref{theorem:restriction-properties-introduction}\cref{enumerate:restriction-azumaya}.
\begin{proposition}
  \label{proposition:restriction-azumaya}
  Let~$S,\mathcal{A},C$ be as in \cref{theorem:restriction},
  and assume moreover that~$C$ is integral.
  Assume that~$\order{A}|_C$ is an order.
  Then the following are equivalent:
  \begin{enumerate}
    \item\label{enumerate:restriction-azumaya-azumaya} $\order{A}|_C$ is an Azumaya algebra;
    \item\label{enumerate:restriction-azumaya-discriminant} $C\cap\Delta=\emptyset$.
    \item\label{enumerate:restriction-azumaya-isomorphism} the morphism~$r_C\circ c_C\colon\stack{C}_\can\to C$ is an isomorphism;
  \end{enumerate}
\end{proposition}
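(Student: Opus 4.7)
My plan is to establish the three equivalences in pieces: (1) $\Leftrightarrow$ (2) follows immediately from the preceding proposition, while (2) $\Leftrightarrow$ (3) splits into two implications, each relying on a different tool. The first equivalence is handled directly by \cref{proposition:restriction-order}, which identifies the discriminant of the order $\order{A}|_C$ with the intersection $C \cap \Delta$; since an order on an integral curve is Azumaya precisely when its discriminant vanishes, (1) and (2) are equivalent.

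For (2) $\Rightarrow$ (3), I plan to invoke \cref{proposition:r-c-isomorphism}. Assuming $C \cap \Delta = \emptyset$, every point $p \in C$ satisfies $p \notin \Delta$, so by the first part of that proposition $r$ becomes an isomorphism after base change to $S_p$. Moreover, since $S$ is smooth and $\Delta^\sing \subseteq \Delta$, the condition $p \notin \Delta^\sing \cup S^\sing$ is automatic, and the second part then yields that $c$ is also an isomorphism over $S_p$. The non-isomorphism loci of $r$ and $c$ are closed subsets contained in $\Delta$, so both maps are isomorphisms over the Zariski open $S \setminus \Delta$, which contains $C$. Base changing along $C \hookrightarrow S$ then shows that $r_C$, $c_C$, and hence their composition, are isomorphisms.

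For (3) $\Rightarrow$ (1), I would combine the pushforward isomorphisms $c_{C,*}\order{B}_\can \cong \order{B}_\rt$ and $r_{C,*}\order{B}_\rt \cong \order{B}$ from \cref{proposition:canonical-stack-restriction,proposition:root-stack-restriction} to obtain $(r_C \circ c_C)_*\order{B}_\can \cong \order{B}$. When $r_C \circ c_C$ is an isomorphism, this identifies $\order{B}$ with the Azumaya algebra $\order{B}_\can$ on $\stack{C}_\can$, so $\order{B}$ itself is Azumaya on $C$. The only mildly delicate step is the local-to-global passage in (2) $\Rightarrow$ (3), but this is routine once one observes that the non-isomorphism locus of a representable proper morphism of finite-type stacks is closed, reducing the problem to the pointwise statement already given by \cref{proposition:r-c-isomorphism}.
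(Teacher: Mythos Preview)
Your proof is correct and follows essentially the same route as the paper. The paper proves $(1)\Leftrightarrow(2)$ via the fiberwise characterization of Azumaya algebras (which is what underlies \cref{proposition:restriction-order}, so your citation is equivalent), and proves $(2)\Leftrightarrow(3)$ in both directions from \cref{proposition:r-c-isomorphism}. Your only deviation is in closing the cycle: rather than deducing $(3)\Rightarrow(2)$ from the ``only if'' direction of \cref{proposition:r-c-isomorphism}, you go $(3)\Rightarrow(1)$ directly by pushing forward the Azumaya algebra $\order{B}_\can$ along the isomorphism $r_C\circ c_C$. This is a perfectly valid alternative and arguably cleaner, since it avoids the small step of checking that non-trivial stabilizers of $\stack{S}_\rt$ over $\Delta$ survive base change to $C$.
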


\begin{proof}
  The equivalence $\cref{enumerate:restriction-azumaya-azumaya}\Leftrightarrow\cref{enumerate:restriction-azumaya-discriminant}$
  follows from the characterization of Azumaya algebras in the fibers \cite[Proposition~IV.2.1(b)]{MR559531},
  because~$\mathcal{A}$ is already locally free,
  as~$S$ is assumed to be smooth.

  The equivalence $\cref{enumerate:restriction-azumaya-discriminant}\Leftrightarrow\cref{enumerate:restriction-azumaya-isomorphism}$
  follows from \cref{proposition:r-c-isomorphism}:
  the maps~$r$ and~$c$ are an isomorphism if and only if~$C\subset S\setminus\Delta$.
\end{proof}

Finally,
the following proposition is a more precise version
of \cref{theorem:restriction-properties-introduction}\cref{enumerate:restriction-hereditary}.
\begin{proposition}
  \label{proposition:restriction-hereditary}
  Let~$S,\mathcal{A},C$ be as in \cref{theorem:restriction},
  and assume moreover that~$C$ is integral.
  We will use the notation introduced in \eqref{equation:notation-restricted-sheaves-of-algebras}.

  If~$C$ is smooth and intersects~$\Delta$ transversely in the smooth locus of~$\Delta$,
  then
  \begin{enumerate}
    \item
      \label{enumerate:restriction-hereditary-root-stack}
      The morphism~$c_C\colon\stack{C}_\can\to\stack{C}_\rt$ is an isomorphism,
      and the stacks are isomorphic to the root stack in the reduced divisor~$C\cap\Delta$
      with the multiplicities given by the ramification indices of the order~$\order{A}|_C$.
    \item
      \label{enumerate:restriction-hereditary-hereditary}
      The order~$\order{B}$ is hereditary.
  \end{enumerate}
  If~$C$ intersects~$\Delta$ non-transversely in the smooth locus of~$\Delta$,
  then
  \begin{enumerate}
      \setcounter{enumi}{2}
    \item The morphism~$c_C\colon\stack{C}_\can\to\stack{C}_\rt$ is an isomorphism.
      They are isomorphic to the root stack in the non-reduced divisor~$C\cap\Delta$
      with the multiplicities given by the ramification indices.
    \item The order~$\order{B}$ is not hereditary.
  \end{enumerate}
\end{proposition}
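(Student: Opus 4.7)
The plan is to work \'etale-locally at each intersection point $p \in C \cap \Delta$ using the explicit local model from \cref{corollary:root-stack-local}. Since $p$ lies in the smooth locus of $\Delta$, I can choose complete local coordinates so that $\widehat{\mathcal{O}}_{S,p} \cong \field[\![x,y]\!]$, the branch of $\Delta$ at $p$ is $V(x)$, and $\order{A}$ is locally ramified only at $\mathfrak{p} = (x)$ with some index $e \geq 2$. Then $\stack{S}_\rt$ is locally $[\Spec R_e / \mu_e]$ with $R_e = \field[\![x,y]\!][t]/(t^e - x) \cong \field[\![y,t]\!]$, which is smooth. Hence \cref{proposition:r-c-isomorphism} gives that $c$, and by base change $c_C$, are isomorphisms above $p$; away from $\Delta$ the same proposition again gives that $c_C$ is an isomorphism. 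This establishes the first claim in both~(1) and~(3).

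Next I would compute $\stack{C}_\rt$ explicitly. Writing $C = V(b)$ locally, the fiber product becomes
\begin{equation}
\stack{C}_\rt \times_C \Spec(R/(b)) \;\cong\; \left[\Spec (R/(b))[t]/(t^e - \bar x) \,/\, \mu_e\right],
\end{equation}
where $\bar x$ is the image of $x$ in $R/(b)$. In the transverse case $\{b,x\}$ is a regular system of parameters and $\bar x$ is a uniformizer of $R/(b)$, so the display above is precisely the standard local presentation of the root stack of order $e$ at the reduced point $p \in C$. In the non-transverse case $\bar x = u^m v$ with $u$ a uniformizer of $R/(b)$, $v$ a unit, and $m \geq 2$, so $\stack{C}_\rt$ is locally the root stack of order $e$ at the non-reduced divisor $m\cdot[p]$. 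The identification of the multiplicity $e$ with the ramification index of $\order{B}$ at $p$ in the transverse case then follows from the uniqueness part of \cref{theorem:chan-ingalls}.

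For the hereditarity claims, note that by the first paragraph $\stack{S}_\rt$ is smooth above every $p \in C \cap \Delta$, so the reflexive Azumaya algebra $\order{A}_\rt$ is in fact Azumaya there; consequently $\order{B}_\rt = \order{A}_\rt|_{\stack{C}_\rt}$ is Azumaya on $\stack{C}_\rt$ in a neighborhood of the preimage of $p$. Combining \cref{theorem:restriction} with the Morita equivalence induced by an Azumaya algebra, the global dimension of $\coh(C, \order{B})$ at $p$ equals that of $\coh(\stack{C}_\rt)$ at $p$. In the transverse case the local ring $\field[\![y,t]\!]/(t^e - y) \cong \field[\![t]\!]$ is regular, so $\stack{C}_\rt$ is smooth at $p$ and $\order{B}$ is hereditary. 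In the non-transverse case the local ring $\field[\![u,t]\!]/(t^e - u^m)$ with $e, m \geq 2$ is a hypersurface singularity of infinite global dimension, so $\order{B}$ cannot be hereditary.

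The main obstacle is organizing the two layers of the construction---the fiber product with $C$ and the $\mu_e$-quotient---so that the resulting stack is transparently identifiable as a root stack on $C$; the explicit formula of \cref{corollary:root-stack-local} reduces this to a routine hypersurface computation. A subsidiary subtlety is matching the root-stack multiplicity $e$ with a genuine ramification index of $\order{B}$ in case~(1), for which I would invoke the uniqueness in \cref{theorem:chan-ingalls}; in case~(3) the multiplicity $e$ is simply an intrinsic feature of $\stack{C}_\rt$, since $\order{B}$ is no longer hereditary and the classical notion of ramification index does not apply.
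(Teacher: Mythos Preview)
Your proposal is correct and follows essentially the same strategy as the paper: invoke \cref{proposition:r-c-isomorphism} to identify $c_C$ as an isomorphism over the smooth locus of~$\Delta$, describe $\stack{C}_\rt$ as a root stack (reduced or not according to the intersection multiplicity), and then read off (non-)hereditarity of $\order{B}$ from the (non-)smoothness of this root stack via the equivalence \eqref{equation:restriction-equivalences}. The paper's own proof is a two-line sketch that simply points to \cref{proposition:r-c-isomorphism}, ``functoriality of the root stack,'' and the invariance of global dimension under the equivalence; you have unpacked exactly these steps with the explicit local model of \cref{corollary:root-stack-local}, which is helpful but not a genuinely different argument.
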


\begin{proof}
  In both cases the isomorphism of stacks follows from \cref{proposition:r-c-isomorphism}.
  The description of the root stack follows from the functoriality properties of the root stack construction.

  Because global dimension is an invariant of the abelian categories,
  the equivalence \eqref{equation:restriction-equivalences}
  explains when~$\order{B}$ is (not) hereditary.
  Namely,
  the root stack~$\stack{C}$ is smooth if and only if
  the divisor we consider is reduced,
  which is determined by the (non-)transversality of the intersection~$C\cap\Delta$.
\end{proof}

If~$\order{A}$ is terminal and~$C$ is smooth,
we can upgrade \cref{proposition:restriction-hereditary} to the following.
It would be interesting to further extend this to all tame orders and integral curves.
\begin{proposition}
  \label{proposition:restriction-terminal-order}
  Let $S,\order{A}, C$ be as in \cref{theorem:restriction}
  and assume moreover that $C$ is smooth and $\order{A}$ is terminal.
  Then the following are equivalent.
  \begin{enumerate}
    \item \label{enumerate:restriction-hereditary-root-stack-terminal}
      The morphism~$c_C\colon\stack{C}_\can\to\stack{C}_\rt$ is an isomorphism,
      and the stacks are isomorphic to the root stack in the reduced divisor~$C\cap\Delta$
      with the multiplicities given by the ramification indices of the order~$\order{A}|_C$.
    \item \label{enumerate:restriction-hereditary-hereditary-terminal}
      The order~$\order{A}|_C$ is hereditary.
    \item \label{enumerate:intersection-transversal-hereditary}
      The curve $C$ intersects $\Delta$ transversely in the smooth locus of~$\Delta$.
  \end{enumerate}
\end{proposition}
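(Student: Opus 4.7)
The implications $(3) \Rightarrow (1)$ and $(3) \Rightarrow (2)$ are already recorded in \cref{proposition:restriction-hereditary}. The implication $(1) \Rightarrow (2)$ is immediate from \cref{theorem:restriction}: a root stack in a reduced divisor of a smooth curve is a smooth stacky curve, so $\coh(\stack{C}_\rt, \order{B}_\rt)$ has global dimension one, and hence so does the equivalent category $\coh(C, \order{A}|_C)$. The only remaining implication is $(2) \Rightarrow (3)$, which I would prove by contraposition.

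Assume $C$ violates $(3)$, so there is a point $p \in C \cap \Delta$ at which either $C$ meets $\Delta^\smooth$ non-transversely, or $p \in \Delta^\sing$. The first possibility is already treated in \cref{proposition:restriction-hereditary} and yields a non-hereditary $\order{A}|_C$. The second case is where the terminality of $\order{A}$ becomes essential: for a terminal order the divisor $\Delta$ is strict normal crossings, and at each node $p \in \Delta^\sing$ the two branches carry coprime ramification indices $e_1, e_2 \geq 2$. Pick local coordinates $a_1, a_2 \in \mathcal{O}_{S,p}$ so that the two branches are $\{a_i = 0\}$. By \cref{corollary:root-stack-local}, $\stack{S}_\rt$ is locally presented as $[\Spec R_e / \mu_e]$ with $e = e_1 e_2$ and $R_e = R[t]/(t^e - a_1^{e_2} a_2^{e_1})$, so by functoriality of the fibre product $\stack{C}_\rt$ is locally $[\Spec(R_e / fR_e) / \mu_e]$, where $f \in R$ is any cutting equation of $C$ with $df(p) \neq 0$.

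To conclude, one must verify that $\Spec(R_e/fR_e)$ is singular at $p$ for every such choice of $f$. Writing $f \equiv \alpha a_1 + \beta a_2 \pmod{\mathfrak{m}_R^2}$ with $(\alpha,\beta) \neq (0,0)$ and eliminating one coordinate reduces the defining relation to an equation of the form $t^e = c \cdot a^{e_1 + e_2} + (\text{higher order in } a)$ for a local parameter $a$ on $C$ and a nonzero constant $c$. Since $e_1, e_2 \geq 2$, both exponents $e$ and $e_1 + e_2$ exceed $1$, so every partial derivative of this relation vanishes at the origin and the slice is singular. Hence $\stack{C}_\rt$ is not a smooth Deligne--Mumford stack, and \cref{theorem:restriction} forces $\coh(C, \order{A}|_C)$ to have infinite global dimension, so $\order{A}|_C$ is not hereditary. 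The main obstacle is precisely this final singularity verification and its independence of $f$: the two conditions $e_1, e_2 \geq 2$ and $\gcd(e_1, e_2) = 1$ making up the terminal hypothesis at a node are exactly what make the argument go through, and this is where one expects the extension to non-terminal tame orders to genuinely require new input.
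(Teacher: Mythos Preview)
Your overall strategy is different from the paper's, and the difference is worth noting, but there are two real problems in the argument for $(2)\Rightarrow(3)$ at a node of~$\Delta$.

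\textbf{A factual error about terminal orders.} You assert that at a node the two branches carry \emph{coprime} ramification indices. This is not what terminality says: in the \'etale local model~$\Lambda(n,\zeta)$ of \cite[\S2.3]{MR2180454} the ramification indices along the two branches are~$e$ and~$ne$, so one divides the other. The graded Clifford orders in \cref{lemma:terminal} already give a counterexample, with both indices equal to~$2$. Consequently your formula~$e=e_1e_2$ for the parameter in \cref{corollary:root-stack-local} is wrong; it must be~$e=\operatorname{lcm}(e_1,e_2)$, with exponents~$e/e_1,\ e/e_2$. As it happens, the corrected relation~$t^e = (\text{unit})\cdot a^{e/e_1+e/e_2}$ is still singular (since~$e\ge2$ and~$e/e_1+e/e_2\ge 2$), so this error is repairable.

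\textbf{The genuine gap.} You show that~$\stack{C}_\rt$ is singular and then invoke \cref{theorem:restriction} to conclude that~$\coh(C,\order{A}|_C)$ has infinite global dimension. This step does not follow. The equivalence of \cref{theorem:restriction} identifies~$\coh(C,\order{B})$ with~$\coh(\stack{C}_\rt,\order{B}_\rt)$, but~$\order{B}_\rt$ is only the restriction of the \emph{reflexive} Azumaya algebra~$\order{A}_\rt$, and~$\order{A}_\rt$ need not be Azumaya at the singular points of~$\stack{S}_\rt$, which by \cref{proposition:r-c-isomorphism} lie precisely over~$\Delta^{\sing}$. So singularity of~$\stack{C}_\rt$ alone says nothing about the global dimension of~$\coh(\stack{C}_\rt,\order{B}_\rt)$. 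What you would need is that~$\stack{C}_\can$ is singular, since~$\order{B}_\can$ \emph{is} Azumaya; this requires passing to the \'etale chart~$[\Spec T/H]\to\Spec R_e$ of the canonical stack and pulling~$C$ back there, which is a separate computation you have not done.

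\textbf{Comparison with the paper.} The paper avoids the stacky route entirely for this implication: it computes the fibre~$\order{A}(p)$ at~$p\in\Delta^{\sing}$ directly from the \'etale local form~$\Lambda(n,\zeta)$, presents it as a quiver algebra (\cref{lemma:finite-dimensional-algebra-from-terminal-order,lemma:quiver-of-terminal-order}), and observes that this quiver has a loop at every vertex. Since the fibre of a hereditary order on a smooth curve has no loops, $\order{A}|_C$ cannot be hereditary. Your geometric approach could be made to work by analysing~$\stack{C}_\can$ rather than~$\stack{C}_\rt$, and would then give a genuinely different argument; but as written it stops one step short.
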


\begin{proof}
  The equivalence of \cref{enumerate:restriction-hereditary-root-stack-terminal}
  and \cref{enumerate:restriction-hereditary-hereditary-terminal}
  follows from \cref{theorem:chan-ingalls} and \cref{proposition:r-c-isomorphism}.
  That \cref{enumerate:intersection-transversal-hereditary}
  implies \cref{enumerate:restriction-hereditary-root-stack-terminal},
  and that \cref{enumerate:intersection-transversal-hereditary}
  implies \cref{enumerate:restriction-hereditary-hereditary-terminal}
  follows from \cref{proposition:restriction-hereditary}.

  For the remaining implication from \cref{enumerate:restriction-hereditary-hereditary-terminal}
  to \cref{enumerate:intersection-transversal-hereditary},
  we know already from \cref{proposition:restriction-hereditary}
  that $\order{A}\vert_C$ is not smooth if $C$ intersects $\Delta$ non-transversely in the smooth locus.

  Hence, assume that $p\in C\cap \Delta$ lies in the singular locus of $\Delta$.
  We are going to show that the fiber $\order{A}(p) = \order{A}_p\otimes_{\mathcal{O}_{S,p}}\mathcal{O}_{S,p}/\mathfrak{m}_{S,p}$
  is not the fiber of a hereditary order over a curve.
  For this we can use the \'etale local description of terminal orders
  (the standard form being given in \cite[Definition 2.6]{MR2180454})
  and describe the fiber as a path algebra with relations isomorphic to the finite-dimensional algebra $\order{A}(p)$.
  From \cref{lemma:finite-dimensional-algebra-from-terminal-order} and \cref{lemma:quiver-of-terminal-order} below,
  it follows that this fiber cannot be the fiber of a hereditary order.
  Namely, the presence of loops in the quiver of $\order{A}(p)$
  implies that any simple $\order{A}(p)$-module has non-trivial first self-extension group,
  which is not the case for a hereditary order.
  The description of the fiber of a hereditary order,
  showing it has no loops,
  can be obtained using the same methods as for the proof of \cref{lemma:quiver-of-terminal-order},
  see also \cite{categorical-absorption},
  or it can be deduced from the description in \cite{MR1351364}.
\end{proof}

To calculate the fiber of a terminal order at a singular point of the ramification,
we work \'etale locally,
and we will recall the standard form from \cite[Definition 2.6]{MR2180454}.
Let $R = \field\{u,v\}$ and $S = R\langle x,y\rangle/(x^{e}-u,y^{e}-v,xy-\zeta yx)$,
where $e\in \mathbb{N}$ and $\zeta$ is an $e$th root of unity.
It follows from the discussion in \cite[\S 2.3]{MR2180454} that \'etale-locally
a terminal order $\order{A}$ is isomorphic to
\begin{equation}
  \Lambda(n,\zeta) =
  \begin{pmatrix}
    S      & S  & \ldots & S  & S \\
    xS     & S  &        & S  & S \\
    \vdots &    & \ddots &    & \vdots \\
    xS     & xS & \ldots & S  & S \\
    xS     & xS & \ldots & xS & S \\
  \end{pmatrix} \quad \subseteq \Mat_n(S)
\end{equation}
for some~$n\in\mathbb{N}$.
This allows us to prove the following.
\begin{lemma}
  \label{lemma:finite-dimensional-algebra-from-terminal-order}
  The $\field$-algebra $\overline{\Lambda} = \Lambda(n,\zeta)\otimes_{R} R/(u,v)$
  is a $n^2 e^2$-dimensional $\field$-algebra,
  with basis spanned by $\{e_{ij}^{(\alpha,\beta)}\mid 1\le i,j\le n, 0\le \alpha,\beta\le e-1\}$
  subject to the multiplication rule
  \begin{equation}
    e_{ij}^{(\alpha,\beta)}\cdot e_{jk}^{(\gamma,\delta)}
    =
    \begin{cases}
      e_{ik}^{(\alpha+\gamma,\beta+\delta)}& \text{if }i\le j\le k, \; j\le k<i,\; k<i\le j,\\
      e_{ik}^{(\alpha+\gamma+1,\beta+\delta)} & \text{otherwise},
    \end{cases}
  \end{equation}
  where we set $e_{ik}^{(\alpha,\beta)} = 0$ if $\alpha\ge e$ or $\beta \ge e$.
  Moreover, we have $e_{ij}^{(\alpha,\beta)}\cdot e_{j^\prime k}^{(\gamma,\delta)} = 0$ if $j\neq j^\prime$.
\end{lemma}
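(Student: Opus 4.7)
The plan is a direct computation using the explicit matrix presentation of $\Lambda(n,\zeta)$. First, I would establish that $S$ is a free $R$-module of rank $e^2$ on the monomials $\{x^\alpha y^\beta : 0 \le \alpha,\beta \le e-1\}$, and likewise that the left ideal $xS \subset S$ is free of rank $e^2$ on the shifted basis $\{x^{\alpha+1}y^\beta : 0 \le \alpha,\beta \le e-1\}$ (using $x^e = u$ to verify $R$-linear independence). Summing over the matrix positions, each entry being either $S$ or $xS$, the order $\Lambda(n,\zeta)$ is a free $R$-module of rank $n^2 e^2$, and hence the base change $\overline{\Lambda}$ has $\field$-dimension $n^2 e^2$. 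Writing $\epsilon(i,j)=1$ if $i>j$ and $\epsilon(i,j)=0$ otherwise, I would define $e_{ij}^{(\alpha,\beta)}$ to be the image in $\overline{\Lambda}$ of the matrix whose only nonzero entry is $x^{\alpha+\epsilon(i,j)}y^\beta$ in position $(i,j)$; the reductions of the $R$-basis just described give exactly the claimed $\field$-basis.

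Next, I would verify the multiplication rule by computing inside $\Lambda(n,\zeta)$ and reducing modulo $(u,v)$ only at the end. The case $j \neq j'$ is immediate from $E_{ij}E_{j'k}=0$. Otherwise, matrix multiplication reduces the product to $E_{ik}\cdot x^{\alpha+\epsilon(i,j)}y^\beta\cdot x^{\gamma+\epsilon(j,k)}y^\delta$, and using $yx=\zeta^{-1}xy$ to move $y^\beta$ past $x^{\gamma+\epsilon(j,k)}$ yields, up to a nonzero scalar, the monomial $E_{ik}\cdot x^{\alpha+\gamma+\epsilon(i,j)+\epsilon(j,k)}y^{\beta+\delta}$. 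Comparing with $e_{ik}^{(a,\beta+\delta)} = E_{ik}\cdot x^{a+\epsilon(i,k)}y^{\beta+\delta}$ forces $a = \alpha+\gamma+\epsilon(i,j)+\epsilon(j,k)-\epsilon(i,k)$. The vanishing convention for $a \ge e$ or $b\ge e$ then follows because $x^e = u$ and $y^e = v$ both reduce to zero in $\overline{\Lambda}$. Running through the six weak orderings of $i,j,k$, one verifies that the bracket $\epsilon(i,j)+\epsilon(j,k)-\epsilon(i,k)$ takes value $0$ precisely in the three cases $i\le j \le k$, $j \le k < i$ and $k < i \le j$ listed in the statement, and value $1$ in the other three, which exactly reproduces the two cases of the multiplication rule.

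The main subtlety to address is that the statement suppresses the nonzero scalar $\zeta^{-\beta(\gamma+\epsilon(j,k))}$ arising from the quantum commutation; this can be absorbed by a diagonal renormalization of the basis elements, and is in any case immaterial for the application in \cref{lemma:quiver-of-terminal-order}, where only the pattern of non-vanishing products matters. A second pitfall to avoid is that the natural map $\overline{\Lambda}\to \Mat_n(S/(u,v)S)$ is \emph{not} injective: for instance, when $i>j$ the basis element $e_{ij}^{(e-1,\beta)}$ is the nonzero class of $E_{ij}\cdot uy^\beta$ in $\overline{\Lambda}$, yet its image in $\Mat_n(S/(u,v)S)$ vanishes because $E_{ij}y^\beta$ does not belong to $\Lambda(n,\zeta)$. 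Keeping the computation inside $\Lambda(n,\zeta)$ until the very last step is what makes the bookkeeping go through cleanly; I expect this to be the only genuinely delicate point in the argument.
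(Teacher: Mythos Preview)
Your approach is the paper's---a direct computation from the matrix presentation---but worked out with much more care than the paper's two-sentence sketch; in particular your $\epsilon(i,j)$-shift in the definition of $e_{ij}^{(\alpha,\beta)}$ and your warning that $\overline{\Lambda}\to\Mat_n(S/(u,v)S)$ fails to be injective are exactly the points a reader needs and the paper glosses over.

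One correction: the scalar $\zeta^{-\beta(\gamma+\epsilon(j,k))}$ \emph{cannot} be absorbed by a diagonal renormalisation of the basis. Already for $n=1$ the algebra is $\field\langle x,y\rangle/(x^e,y^e,xy-\zeta yx)$, and no rescaling of the monomials turns $yx=\zeta^{-1}xy$ into $yx=xy$; comparing the products $e^{(1,0)}e^{(0,1)}$ and $e^{(0,1)}e^{(1,0)}$ forces $\zeta=1$. (This is the standard non-coboundary $2$-cocycle of the quantum plane.) So the multiplication table in the lemma is literally correct only up to this explicit root of unity---a minor imprecision already present in the paper's own statement. Your fallback is the right one: for the application in \cref{proposition:restriction-terminal-order} only the \emph{support} of the products matters, so the loops in the Gabriel quiver are unaffected. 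Do note, however, that the $\zeta$ resurfaces in the relation $\lambda_n\mu_{n,1}-\zeta\mu_{n,1}\lambda_1$ of \cref{lemma:quiver-of-terminal-order}, so it is cleaner to record the scalar explicitly than to claim it can be normalised away.
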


\begin{proof}
  The algebra $\Lambda(n,\zeta)$ is freely generated by the elementary matrices
  multiplied by the element $x^{\alpha}y^{\beta}$ for $0\le \alpha,\beta \le e-1$.
  These form a $\field$-basis of $\overline{\Lambda}$,
  where we denote by $e_{ij}^{(\alpha,\beta)}$ the image of the elementary matrix
  with $x^{\alpha}y^{\beta}$ in the $(i,j)$th entry.
  The multiplication rules follow from the algebra structure of $\Lambda(n,\zeta)$.
\end{proof}

This allows us to obtain the following.

\begin{lemma}
  \label{lemma:quiver-of-terminal-order}
  With the assumptions from \cref{lemma:finite-dimensional-algebra-from-terminal-order},
  the algebra $\overline{\Lambda}$ is isomorphic to the quotient of the path algebra $\field Q/I$,
  where $Q$ is the quiver
  \begin{equation}
    \begin{tikzpicture}[baseline,vertex/.style={draw, circle, inner sep=0pt, text width=2mm}, scale=2.5]
      \node[vertex] (a) at (150:.8cm) {};
      \node[vertex] (b) at (90:.8cm)  {};
      \node[vertex] (c) at (30:.8cm)  {};
      \node         (d) at (350:.8cm) {$\ldots$};
      \node         (e) at (310:.8cm) {$\ldots$};
      \node[vertex] (f) at (270:.8cm) {};
      \node[vertex] (g) at (210:.8cm) {};
      \node[below right] at (a) {$\scriptstyle 1$};
      \node[below]       at (b) {$\scriptstyle 2$};
      \node[below left]  at (c) {$\scriptstyle 3$};
      \node[above]       at (f) {$\scriptstyle n-1$};
      \node[above right] at (g) {$\scriptstyle n$};
      \draw[-{Classical TikZ Rightarrow[]}] (a) -- node [above] {$\mu_{1,2}$\quad} (b);
      \draw[-{Classical TikZ Rightarrow[]}] (b) -- node [above] {\quad$\mu_{2,3}$} (c);
      \draw[-{Classical TikZ Rightarrow[]}] (c) -- node [right] {$\mu_{3,4}$} (d);
      \draw[-{Classical TikZ Rightarrow[]}] (e) -- node [below right] {$\mu_{n-2,n-1}$} (f);
      \draw[-{Classical TikZ Rightarrow[]}] (f) -- node [below] {$\mu_{n-1,n}\quad$} (g);
      \draw[-{Classical TikZ Rightarrow[]}] (g) -- node [left] {$\mu_{n,1}$} (a);
      \draw[-{Classical TikZ Rightarrow[]}] (a) to [out=180,in=120,looseness=40] node [left]  {$\lambda_1$} (a);
      \draw[-{Classical TikZ Rightarrow[]}] (b) to [out=120,in=60,looseness=40]  node [above] {$\lambda_2$} (b);
      \draw[-{Classical TikZ Rightarrow[]}] (c) to [out=60,in=0,looseness=40]    node [right] {$\lambda_3$} (c);
      \draw[-{Classical TikZ Rightarrow[]}] (f) to [out=300,in=240,looseness=40] node [below] {$\lambda_{n-1}$} (f);
      \draw[-{Classical TikZ Rightarrow[]}] (g) to [out=240,in=180,looseness=40] node [left] {$\lambda_n$} (g);
    \end{tikzpicture}
  \end{equation}
  and $I\unlhd \field Q$ is the ideal generated by the relations
  \begin{equation}
    \begin{aligned}
      (\mu_{i,i+1}\cdots \mu_{i-1,i})^{e} \quad &\text{for all }i = 1,\ldots, n,\\
      \lambda_i\mu_{i,i+1}-\mu_{i,i+1}\lambda_{i+1}\quad &\text{for all }i = 1,\ldots, n-1,\\
      \lambda_n\mu_{n,1}-\zeta \mu_{n,1}\lambda_1.\quad &\
    \end{aligned}
  \end{equation}
\end{lemma}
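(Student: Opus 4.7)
The plan is to construct an explicit algebra map $\phi\colon \field Q \to \overline{\Lambda}$ and show it descends to an isomorphism $\tilde{\phi}\colon \field Q/I \xrightarrow{\sim} \overline{\Lambda}$. On generators, I would set $\phi(e_i) = e_{ii}^{(0,0)}$, $\phi(\mu_{i,i+1}) = e_{i,i+1}^{(0,0)}$ (with indices modulo~$n$), and $\phi(\lambda_i) = e_{ii}^{(0,1)}$, allowing for a $\zeta$-rescaling of the $\lambda_i$ if needed to match the twisted commutation.

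To verify that $\phi$ annihilates $I$, I would apply the multiplication rule of \cref{lemma:finite-dimensional-algebra-from-terminal-order} iteratively. Composing $\mu$-images around the cycle $i\to i+1\to \cdots \to i$, the only wrap-around multiplication falls into the ``otherwise'' case, contributing the single $+1$ to the $\alpha$-exponent; this yields $\phi(\mu_{i,i+1}\cdots\mu_{i-1,i}) = e_{ii}^{(1,0)}$, whose $e$-th power vanishes by the convention $e_{ii}^{(e,0)} = 0$. For the straight commutations $\lambda_i\mu_{i,i+1} = \mu_{i,i+1}\lambda_{i+1}$ with $i<n$, both sides fall in the non-wrap case and reduce to $e_{i,i+1}^{(0,1)}$; the twisted relation at the wrap-arrow is verified by the analogous computation, now picking up the implicit twist whose scalar matches the $\zeta$ on the right-hand side.

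For the isomorphism, I would exhibit for each quadruple $(i,j,\alpha,\beta)$ with $0\le \alpha,\beta < e$ the explicit path $\lambda_i^{\beta}\gamma_i^{\alpha}p_{i\to j}$, where $\gamma_i = \mu_{i,i+1}\cdots\mu_{i-1,i}$ is the cycle at~$i$ and $p_{i\to j}$ is the direct arc from $i$ to~$j$ along the cycle. A direct application of the multiplication rule shows that $\phi$ sends this family of $n^2e^2$ paths bijectively onto the basis $\{e_{i,j}^{(\alpha,\beta)}\}$ of $\overline{\Lambda}$, giving both surjectivity of $\tilde{\phi}$ and linear independence of the paths in $\field Q/I$.

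The main obstacle is the combinatorial bookkeeping in the verification step: one must track that the ``otherwise'' case of the multiplication rule is triggered exactly once per cycle traversal, and that the scalar $\zeta$ in the twisted commutation matches the implicit twisting inherent in the multiplication rule. A secondary subtlety is that the listed relations do not manifestly force $\lambda_i^e = 0$ even though $\phi(\lambda_i^e) = e_{ii}^{(0,e)} = 0$; this vanishing must either be derived as a consequence of the cycle and (twisted) commutation relations, or absorbed into the argument by working directly with the explicit family of spanning paths above.
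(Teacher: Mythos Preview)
Your approach is essentially the paper's: identify the $e_{ii}^{(0,0)}$ as a complete set of primitive orthogonal idempotents, recognise $e_{ii}^{(0,1)}$, $e_{i,i+1}^{(0,0)}$ and $e_{n,1}^{(0,0)}$ as generators of $\rad\overline{\Lambda}/\rad^2\overline{\Lambda}$ (hence as the arrows), and read off the relations from the multiplication table. The paper's argument is in fact terser than yours and does not address either of the subtleties you flag.

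The second subtlety you raise is, however, a genuine gap rather than a bookkeeping detail, and neither of your proposed remedies closes it. The vanishing $\lambda_i^e=0$ \emph{cannot} be derived from the listed relations: already for $n=1$ the quotient is $\field\langle\mu,\lambda\rangle/(\mu^e,\ \lambda\mu-\zeta\mu\lambda)$, which is infinite-dimensional since $1,\lambda,\lambda^2,\ldots$ remain linearly independent; for general $n$ the derived relation $\lambda_i\gamma_i=\zeta\gamma_i\lambda_i$ likewise imposes no nilpotency on $\lambda_i$. Consequently your proposed family $\{\lambda_i^\beta\gamma_i^\alpha p_{i\to j}:0\le\alpha,\beta<e\}$ does not span $\field Q/I$, and the dimension argument collapses. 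The statement as written appears to need the additional relations $\lambda_i^e$ in $I$; once those are added, your explicit map and spanning-set argument go through cleanly. The paper's proof is equally silent on this point.
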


\begin{proof}
  The $\field$-algebra $\overline{\Lambda}$ is finite-dimensional, basic and connected:
  a full set of primitive orthogonal idempotents is formed by $e_{11}^{(0,0)},\ldots, e_{nn}^{(0,0)}$.
  It is a straightforward calculation that $\rad(\Lambda)/\rad^2(\Lambda)$
  is generated by $e_{ii}^{(0,1)}$, $e_{i,i+1}^{(0,0)}$ and $e_{n,1}^{(0,0)}$.
  This explains the shape of the quiver.

  The relations follow from the multiplication rules in \cref{lemma:finite-dimensional-algebra-from-terminal-order}.
\end{proof}

It would be interesting to upgrade these results.
First of all, \cref{proposition:restriction-terminal-order}
should hold for tame orders,
but our proof depends on the local structure of terminal orders for the time being.

Secondly,
our results suggest to study what happens in the singular case.
There exist other order-theoretic properties one can study,
such as being Bass, Gorenstein, tiled, or nodal.
These are defined in \cref{section:noncommutative-conics}.
We will study these notions in explicit examples,
where we have access to additional tools,
These examples indicate what the full picture should look like.

\section{Noncommutative plane curves}
\label{section:noncommutative-plane-curves}
In what follows we will discuss and extend results from the literature
which were obtained through very different methods,
but for which the perspective of central curves gives a unified and improved approach.
These applications are concerned with quotients of three-dimensional Artin--Schelter regular algebras,
thus giving rise to some notion of ``noncommutative plane curve''.

As these Artin--Schelter regular algebras from the literature
are (almost) all finite over the center,
we will in \cref{subsection:central-proj} recall some properties of the central Proj construction,
which turns them into maximal orders on~$\mathbb{P}^2$.
This makes them amenable to the setup of central curves introduced in \cref{section:restriction}.
As explained in the introduction,
we will mostly focus on the case of noncommutative plane conics
and noncommutative plane cubics,
in \cref{section:noncommutative-conics} resp.~\cref{section:noncommutative-cubics}.

%
%

\subsection{Reminder on central Proj construction}
\label{subsection:central-proj}
To a graded algebra which is finite over its center
one can associate an order on a projective variety via the central Proj construction,
which we will recall from \cite[Section 2]{MR1356364}.

Let~$A$ be a connected graded algebra generated in degree $1$,
which is moreover finite as a module over a central graded integral domain~$R\subset A$.
Then there is a projective variety~$Y = \Proj R$
and~$A$ induces a coherent sheaf of algebras~$\order{A} \in \coh Y$,
defined on each standard open~$\distinguished(g)\subset Y$ by
\begin{equation}
  \label{eq:defofalgebraonaffineopen}
  \Gamma(\distinguished(g),\order{A}) \colonequals A[g^{-1}]_0,
\end{equation}
with the~$R[g^{-1}]_0$-linear multiplication induced by the one on~$A$.
In general~$\order{A}$ is not an~$\mathcal{O}_Y$-order, even for the choice $R=\ZZ(A)$,
and its center~$\mathcal{Z} = \ZZ(\order{A})$ is in general bigger than~$\mathcal{O}_Y \subset \mathcal{Z}$.
However, one can consider the affine morphism
\begin{equation}
  \label{equation:central-proj-affine}
  s\colon \relSpec_Y \mathcal{Z} \longrightarrow Y.
\end{equation}
and there is a sheaf of algebras~$\order{\underline A} \in \coh \relSpec_Y \mathcal{Z}$
that is uniquely defined by the property~$s_*\order{\underline A} = \order{A}$.
The pair~$(\relSpec_Y \mathcal{Z},\order{\underline A})$ is called the \emph{central Proj} of~$A$.
In \cite{MR1356364} the sheaf of algebras~$\order{\underline A}$ was somewhat erroneously denoted~$s^*\order{A}$.

It was shown in~\cite{MR1356364} that $\order{\underline A}$ is a well-behaved order
under some regularity conditions on $A$.
In particular we have the following result \cite[Proposition~1]{MR1356364},
which will apply in the setup of \cref{section:dictionaries,section:restriction}.

\begin{proposition}[Le Bruyn]
  \label{proposition:le-bruyn-maximal order}
  If~$\gldim A < \infty$
  then~$\order{\underline A}$ is a maximal order over~$\relSpec_Y \mathcal{Z}$.
  Moreover,~$\relSpec_Y \mathcal{Z}$ is normal and Cohen--Macaulay,
  and~$\order{\underline A}$ is a sheaf of Cohen--Macaulay modules.
\end{proposition}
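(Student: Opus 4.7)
The plan is to reduce to a local assertion on the standard affine charts $\distinguished(g) = \Spec R[g^{-1}]_0$ covering $Y = \Proj R$. Writing $R' = R[g^{-1}]_0$ and $A' = A[g^{-1}]_0 = \Gamma(\distinguished(g),\order{A})$, all three properties claimed in the proposition (maximality of the order, normality, and Cohen--Macaulayness of the center, and the Cohen--Macaulay property of the algebra over its center) are local on $\relSpec_Y \mathcal{Z}$, so it suffices to prove them for $A'$ as a module-finite algebra over its center $\mathcal{Z}(A') = \Gamma(\distinguished(g),\mathcal{Z})$.

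First I would transfer the hypotheses to $A'$. Since $A$ is noetherian (being module-finite over the central graded noetherian domain $R$) and prime (having constant generic rank over the domain $R$), the graded localization $A[g^{-1}]$ is again noetherian and prime, and its $\Gm$-invariant part $A'$ inherits these properties. Exactness of localization and of taking the degree-$0$ summand under the $\Gm$-grading imply $\gldim A' \leq \gldim A < \infty$. Finally, $A'$ is module-finite over the commutative noetherian domain $R'$, so a fortiori over its (possibly larger) center $\mathcal{Z}(A')$.

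The bulk of the argument is then an invocation of the structure theory of module-finite noetherian PI algebras of finite global dimension: such an algebra is \emph{homologically homogeneous} in the sense of Brown--Hajarnavis and Stafford--Zhang, i.e.\ every simple module has the same projective dimension, equal to the Krull dimension of the center. This has three classical consequences which precisely match the three assertions of the proposition. First, by the Auslander--Goldman criterion, a homologically homogeneous prime ring is a maximal order in its total quotient ring over the integral closure of its center. Second, the center of such a ring is Cohen--Macaulay and integrally closed (and hence normal), by the Auslander--Buchsbaum formula applied to the regular representation combined with Serre's criterion for normality. Third, the algebra itself, having all simples of equal projective dimension over the center, is Cohen--Macaulay as a module over that center.

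Reassembling over the open cover of $Y$, one then gets the global conclusions on $\relSpec_Y \mathcal{Z}$. In my view the main obstacle is not any single deep step but rather the careful bookkeeping at the start: one must check that the ``finite global dimension'' and ``prime/torsion-free'' hypotheses survive the graded localization and passage to degree zero, so as to trigger the homological-homogeneity machine in its correct form. Once this is in place, the remaining statements are formal consequences of classical theorems, and no further input about the particular shape of the grading or the generators of $A$ in degree $1$ is needed.
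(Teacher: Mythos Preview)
The paper does not actually prove this proposition: it is stated with attribution to Le Bruyn and cited as \cite[Proposition~1]{MR1356364}, with no argument given. So there is no ``paper's own proof'' to compare against; what you have written is a reconstruction of the proof from the original reference.

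Your outline is essentially the correct one, and is in fact the strategy used in the original source: pass to the affine charts $A' = A[g^{-1}]_0$, check that finite global dimension survives (via the strong grading on $A[g^{-1}]$, since $A$ is generated in degree~$1$), and then invoke the Stafford--Zhang / Brown--Hajarnavis theory of homologically homogeneous PI rings to conclude that $A'$ is a maximal order with normal Cohen--Macaulay center and is itself Cohen--Macaulay over that center. One point you pass over too quickly is primeness: your justification ``having constant generic rank over the domain $R$'' is not a proof that $A$ (or $A'$) is prime, and primeness is needed both to speak of $\order{\underline A}$ as an order and to apply the maximal-order conclusion of the homological-homogeneity machinery. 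In the intended setting this follows because a connected graded noetherian algebra of finite global dimension which is module-finite over a central domain is a domain (this is where the connectedness hypothesis and the Stafford--Zhang results are genuinely used), but you should state and justify this rather than assert it. Apart from this, the bookkeeping you flag---that localization and passage to degree zero preserve finiteness of global dimension---is indeed the only delicate step, and your sketch handles it correctly.
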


The noncommutative geometry of~$A$ is captured by both~$(Y,\order{A})$
and the central Proj~$(\relSpec_Y \mathcal{Z},\order{\underline A})$
since there are equivalences
\begin{equation}
  \label{equation:central-proj-equivalences}
  \qgr A \simeq \coh(Y,\order{A}) \simeq \coh(\relSpec_Y\mathcal{Z},\order{\underline A}).
\end{equation}

There are various algebras~$A$ for which the central Proj is simply isomorphic to $Y = \Proj\ZZ(A)$:
it was shown by Smith--Tate \cite[Lemma 5.1]{MR1273835} that this is true if the center $\ZZ(A)$
is generated by non-zerodivisors of coprime degrees.
Moreover, for an algebra~$A$ with generators~$z_1,\ldots, z_n$ in degree~$d_i$
such that~$d = \gcd(d_1,\ldots, d_n)$,
one can employ the \emph{Veronese construction}
\begin{equation}
  A^{(d)} = \bigoplus_{n\in\mathbb{Z}} A^{(d)}_n = \bigoplus_{n\in\mathbb{Z}} A_{nd}.
\end{equation}
As before we define the space $Y^{(d)} = \Proj \ZZ(A^{(d)})$
and a sheaf of algebras $\order{A}^{(d)}$ on $Y^{(d)}$ associated to $A^{(d)}$.
Keeping track of the noncommutative sheaf of algebras, we get the following.

\begin{lemma}[{After \cite[Lemma 5.1]{MR1273835}}]
  \label{lemma:veronese-construction}
  Let $A$ be a graded algebra over $\field$ which is finite over its center, and suppose the center is generated by homogeneous elements $z_1,\ldots,z_n$ of degrees $d_1,\ldots,d_n$ which are non-zerodivisors.
  Writing $d = \gcd(d_1,\ldots,d_n)$ there is an isomorphism of ringed spaces
  \begin{equation}
    (\relSpec_Y\mathcal{Z},\order{\underline A}) \cong (Y^{(d)}, \order{A}^{(d)}).
  \end{equation}
\end{lemma}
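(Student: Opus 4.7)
The plan is to reduce to the coprime-degrees case handled by Smith--Tate via the Veronese~$B \colonequals A^{(d)}$. Since~$d \mid d_i$, each generator~$z_i$ lies in~$B$ with rescaled degree~$d_i/d$, and~$\gcd(d_1/d,\ldots,d_n/d) = 1$. Thus any generating set of~$\ZZ(B)$ that includes the~$z_i$ has degrees with gcd~$1$, and the~$z_i$ themselves supply central non-zerodivisors. Smith--Tate's Lemma~5.1 then applies to~$B$ and yields that the center sheaf of~$\order{A}^{(d)}$ on~$Y^{(d)}$ is just~$\mathcal{O}_{Y^{(d)}}$; equivalently, the central Proj of~$B$ is~$(Y^{(d)},\order{A}^{(d)})$ on the nose.

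Next I would compare the central Proj of~$A$ with that of~$B$ affine-locally on the cover~$\{D_+(z_i)\}$. The crucial observation is the identity
\begin{equation*}
  A[z_i^{-1}]_0 = A^{(d)}[z_i^{-1}]_0,
\end{equation*}
which holds because any degree-zero element~$a z_i^{-k}$ has~$\deg a = kd_i$, a multiple of~$d$, forcing~$a \in A^{(d)}$. Taking centers and combining with the first step, the sections of~$\mathcal{Z}$ over~$D_+(z_i) \subset Y$ coincide with the sections of~$\mathcal{O}_{Y^{(d)}}$ over~$D_+(z_i) \subset Y^{(d)}$. Passing to relative spectra produces local isomorphisms~$\relSpec_Y\mathcal{Z}|_{D_+(z_i)} \cong D_+(z_i) \subset Y^{(d)}$ that intertwine~$\order{\underline A}$ with~$\order{A}^{(d)}$, since on both sides the noncommutative sections are the same ring~$A[z_i^{-1}]_0$.

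Finally I would verify the covering and glue. The opens~$\{D_+(z_i)\}$ cover~$Y$ because the~$z_i$ generate~$\ZZ(A)_+$, and they cover~$Y^{(d)}$ via the finite surjective morphism~$Y^{(d)} \to \Proj\ZZ(A)^{(d)} \cong Y$ induced by the module-finite inclusion~$\ZZ(A)^{(d)} \hookrightarrow \ZZ(A^{(d)})$ (module-finiteness follows from~$A$ being finite over~$\ZZ(A) = \field[z_1,\ldots,z_n]$ together with Noetherianity). Compatibility on overlaps~$D_+(z_i) \cap D_+(z_j) = D_+(z_i z_j)$ follows from naturality of all the constructions in the central element being inverted, since the identity~$A[(z_iz_j)^{-1}]_0 = A^{(d)}[(z_iz_j)^{-1}]_0$ holds by the same degree argument.

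The main obstacle is confirming the Smith--Tate hypothesis for the Veronese~$B$: one must check that~$\ZZ(B)$ admits a generating set of non-zerodivisors whose degrees have gcd~$1$. The coprime-degree half is handled by the~$z_i$ alone, but the non-zerodivisor condition on any additional generators of~$\ZZ(B)$ beyond the~$z_i$ requires a small integrality argument, which is automatic when~$A$ is a domain (as in all applications later in the paper) and otherwise follows from the Noetherian/Cohen--Macaulay setting underlying the central Proj construction.
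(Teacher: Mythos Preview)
Your approach and the paper's share the same core: the identity $A[z_i^{-1}]_0 = A^{(d)}[z_i^{-1}]_0$, proved exactly as you do by the degree argument. The difference is in how the center part is handled. You want to apply the \emph{statement} of Smith--Tate's Lemma~5.1 to $B=A^{(d)}$ (coprime-degree case), which forces you to verify that $\ZZ(B)$ admits a generating set of non-zerodivisors of coprime degree---the obstacle you flag. The paper instead invokes the \emph{content} of Smith--Tate's proof directly for $A$: under the stated hypotheses on $A$, Smith--Tate already establish $\ZZ(A^{(d)}[z_i^{-1}]_0) = \ZZ(A[z_i^{-1}]_0)$, which is exactly what is needed to identify $\relSpec_Y\mathcal{Z}$ with $Y^{(d)}$ on the chart $\distinguished(z_i)$. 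This sidesteps your obstacle entirely, since no hypothesis on the generators of $\ZZ(A^{(d)})$ is required.

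So your proof is correct in outline, but the obstacle you raise is self-inflicted: it disappears once you use what Smith--Tate actually prove rather than the packaged statement. Your gluing discussion and the covering argument via the finite map $Y^{(d)}\to Y$ are fine and implicit in the paper's shorter treatment.
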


\begin{proof}
  In \cite[Lemma 5.1]{MR1273835} Smith and Tate show that~$\ZZ(A^{(d)}[z_i^{-1}]_0) = \ZZ(A[z_i^{-1}]_0)$
  under the given assumptions, thereby proving~$Y^{(d)} \cong \relSpec_Y\mathcal{Z}$.
  It remains to show that~$\order{\underline A}$ corresponds to~$\order{A}^{(d)}$, but this is simply the identification
  \begin{equation}
    \Gamma(\distinguished(z_i),\order{\underline A}) = A[z_i^{-1}]_0 = A^{(d)}[z_i^{-1}]_0 = \Gamma(\distinguished(z_i),\order{A}^{(d)}),
  \end{equation}
  which is also given in the proof of \cite[Lemma 5.1]{MR1273835}.
\end{proof}

Hence we obtained a chain of equivalences
\begin{equation}
  \qgr A \simeq \coh(Y,\order{A}) \simeq \coh(\relSpec_Y\mathcal{Z},\order{\underline A}) \simeq \coh(Y^{(d)},\order{A}^{(d)}) \simeq \qgr A^{(d)}.
\end{equation}
From this point on
we will no longer distinguish notation for~$\order{\underline A}$ and~$\order{A}$,
rather we will just write~$\order{A}$
for the maximal order on~$\relSpec_Y\mathcal{Z}$,
as~$Y$ will no longer play a role.

\paragraph{The central Proj of noncommutative projective planes}
In the remainder of this paper we will consider the special case of
quadratic 3-dimensional AS-regular algebras,
which are coordinate rings of noncommutative projective planes.
In this case the central Proj construction gives a particularly pleasant result,
which was obtained in full generality by Van Gastel~\cite[Proposition~4.2]{MR1880659},
with earlier work by Artin for the Sklyanin case~\cite[Theorem~5.2]{MR1144023},
and Mori for the skew case~\cite[Theorem~4.7]{MR1624000}.
\begin{theorem}
  \label{theorem:van-gastel}
  Let~$A$ be a quadratic 3-dimensional Artin--Schelter regular algebra
  which is finite over its center.
  Then the central Proj of~$A$ is a pair~$(\mathbb{P}^2,\order{A})$
  with~$\order{A}$ a maximal order ramified along a cubic curve which is of one of the following types:
  \begin{enumerate}
    \item[(A)] elliptic curve
    \item[(B)] nodal cubic
    \item[(D)] union of a conic and a line in general position
    \item[(E)] three lines in general position
  \end{enumerate}
\end{theorem}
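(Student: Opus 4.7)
The plan is to combine the classification of 3-dimensional quadratic Artin--Schelter regular algebras with the central Proj formalism just recalled. That classification attaches to~$A$ a geometric pair~$(E,\sigma)$, where~$E\subset\mathbb{P}(A_1^*)\cong\mathbb{P}^2$ is a cubic divisor and~$\sigma$ is an automorphism of~$E$; finiteness of~$A$ over~$\ZZ(A)$ is equivalent to~$\sigma$ having finite order, say~$n$.

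The first step is to identify the underlying space of the central Proj with~$\mathbb{P}^2$. Every quadratic 3-dimensional AS-regular algebra admits a canonical central element~$g\in\ZZ(A)_3$ whose zero locus cuts out the cubic~$E$. Under the finiteness assumption, the work of Artin in the Sklyanin case and Mori in the skew case produces three further central generators sharing a common degree~$d$, such that~$\ZZ(A^{(d)})$ is generated in that single degree. Applying the Veronese identification of \cref{lemma:veronese-construction} then realises~$\relSpec_Y \mathcal{Z}$ as~$\Proj\ZZ(A^{(d)})\cong\mathbb{P}^2$.

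With this in hand, maximality of~$\order{A}$ is immediate from \cref{proposition:le-bruyn-maximal order}, since AS-regular algebras have finite global dimension. For the ramification, I would show that on the complement of~$\{g=0\}=E$ the restriction of~$\order{A}$ is Azumaya: the fiber of~$A[g^{-1}]_0$ at any closed point is a central simple algebra of degree~$n$, because inverting~$g$ kills the twisted homogeneous coordinate ring~$A/(g)$ of~$E$ and the remaining localisation is Azumaya over its center. Over~$E$ the fibers acquire nilpotents, hence~$E$ is contained in the discriminant; since the discriminant of a maximal order on a smooth surface is pure of codimension~$1$, the ramification divisor equals~$E$ (up to multiplicity, which can be read off at a closed point of~$E^\smooth$).

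The hardest part is to pin down which cubics~$E$ can actually occur, i.e.~to rule out the cuspidal cubic and the remaining non-normal configurations from the Artin--Tate--Van~den~Bergh list. The key observation is that~$\sigma|_{E^\smooth}$ must have finite order~$n$, so the smooth locus of~$E$ has to carry a non-trivial finite-order automorphism. This excludes the cuspidal cubic, whose smooth locus is~$\mathbb{G}_{\mathrm{a}}$ with no non-trivial finite-order translations in characteristic zero, as well as the other degenerate configurations, leaving exactly the elliptic, nodal, conic-plus-line and triangle-of-lines cases. Running this case analysis uniformly, in the spirit of Van Gastel's paper, is where the real work lies; the remainder of the argument is a direct application of the central Proj machinery together with \cref{proposition:le-bruyn-maximal order}.
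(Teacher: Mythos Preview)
The paper does not actually prove this theorem: it is stated with attribution to Van Gastel \cite[Proposition~4.2]{MR1880659}, with earlier special cases due to Artin and Mori, and no argument is given beyond the citation. So there is no in-paper proof to compare against; your sketch is essentially an outline of what the cited references do.

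As an outline it is reasonable, but two points deserve sharpening. First, your exclusion argument for the non-listed cubics is incomplete as stated. You correctly observe that translation on~$\mathbb{G}_{\mathrm{a}}$ has no nontrivial finite-order elements in characteristic~$0$, which handles the cuspidal cubic once you know~$\sigma$ acts as a translation on~$E^{\smooth}$; but you do not justify why~$\sigma$ must be of this form rather than, say, a scaling automorphism, and you do not address the remaining Artin--Tate--Van den Bergh types (conic plus tangent line, three concurrent lines, a line with a double line, a triple line), several of which \emph{do} carry finite-order automorphisms. Ruling these out requires the finer compatibility between~$\sigma$ and the restriction~$\mathcal{O}_{\mathbb{P}^2}(1)|_E$ that pins down~$\sigma$ within its automorphism group, and this is precisely the ``real work'' you defer to Van Gastel. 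Second, the ramification step is slightly glib: that~$\order{A}$ is Azumaya on the complement of~$E$ is correct, but the converse inclusion---that every point of~$E$ lies in the discriminant---needs more than ``fibers acquire nilpotents''; one must verify for each component of~$E$ that the fiber is not central simple, which again comes down to the structure of~$A/(g)$ as a twisted homogeneous coordinate ring. None of this is wrong in spirit, but a self-contained proof would have to supply these details rather than gesture at them.
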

The choice of letters will be explained in \cref{subsection:graded-clifford-algebras}.

\subsection{Noncommutative plane curves using central curves}
\label{subsection:noncommutative-via-central-curves}
If~$A$ is any graded algebra which is finite over its center
as in \cref{subsection:central-proj}
and~$f\in A$ is a homogeneous central element,
then the quotient algebra
\begin{equation}
  B\colonequals A/(f).
\end{equation}
defines a noncommutative projective variety~$\qgr B$ that can be viewed as a hypersurface in~$\qgr A$.
We will consider the special case where~$\qgr A$ is a noncommutative plane.

\begin{definition}
  \label{definition:noncommutative-plane-curve}
  Let~$A$ be a quadratic 3-dimensional Artin--Schelter regular algebra
  which is finite over its center.
  Let~$f\in\ZZ(A)_d$ be a central homogeneous element of degree~$d$.
  The noncommutative projective variety~$\qgr A/(f)$ is a \emph{noncommutative plane curve}.
\end{definition}
As explained above,
the literature contains results for noncommutative plane curves when~$d=2,3$,
which we will revisit using the machinery of central curves.

In the rest of this section we fix~$A$ to be a quadratic 3-dimensional Artin--Schelter regular algebra
which is finite over its center~$R\colonequals\ZZ(A)$.
By \cref{lemma:veronese-construction} and \cref{theorem:van-gastel}
the central Proj is of the form
\begin{equation}
  (\mathbb{P}^2,\order{A})
  \cong
  (\Proj\ZZ(A^{(e)}),\order{A}^{(e)}),
\end{equation}
where~$e$ denotes the~$\gcd$ of the degrees of the homogeneous generators of~$R$.
We moreover fix a homogeneous central element~$f\in R_d$
and consider the noncommutative plane curve~$\qgr B$ defined by
\begin{equation}
  B \colonequals A/(f).
\end{equation}
The following reduces the study of the noncommutative plane curve
to that of a sheaf of algebras on a curve,
not via another application of the central Proj construction,
but by restricting the output of the central Proj construction for~$A$.
\begin{proposition}
  \label{proposition:restriction-central-proj}
  Let~$C=\VV(f)\subset\mathbb{P}^2$ be the curve defined by~$f\in R_d \subset \ZZ(A^{(e)})$,
  and define~$\order{B}\colonequals\order{A}|_C$.
  Then there exists an equivalence of categories
  \begin{equation}
    \qgr B
    \simeq
    \coh(C,\order{B}).
  \end{equation}
\end{proposition}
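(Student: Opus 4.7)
The plan is to identify both $\qgr B$ and $\coh(C, \order{B})$ with the full subcategory of objects killed by~$f$ inside~$\qgr A$ and~$\coh(\mathbb{P}^2, \order{A})$ respectively, and then to invoke the central Proj equivalence of \eqref{equation:central-proj-equivalences} to transport one identification into the other.

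First I address the algebraic side. Since~$A$ is a quadratic 3-dimensional Artin--Schelter regular algebra, it is a noetherian domain, so multiplication by the central element~$f$ is injective. The short exact sequence
\begin{equation}
  0 \to A(-d) \xrightarrow{\cdot f} A \to B \to 0
\end{equation}
together with the surjection~$A\twoheadrightarrow B$ induces a fully faithful functor~$\qgr B \hookrightarrow \qgr A$ whose essential image is precisely the full subcategory~$\{M \in \qgr A : fM = 0\}$.

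Next I address the geometric side. Under the central Proj construction the element~$f\in R_d$ becomes a regular global section of the center of~$\order{A}$, and by construction its vanishing locus is the curve~$C = \VV(f) \subset \mathbb{P}^2$. Consequently, the pushforward~$\iota_*\colon \coh(C,\order{B}) \to \coh(\mathbb{P}^2,\order{A})$ along the closed immersion~$\iota\colon C \hookrightarrow \mathbb{P}^2$ is fully faithful with essential image~$\{\mathcal{M}\in \coh(\mathbb{P}^2,\order{A}) : \mathcal{I}_C\mathcal{M}=0\}$, which is the same as the subcategory of~$\order{A}$-modules annihilated by~$f$.

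To conclude, I need to verify that the central Proj equivalence~$\qgr A \simeq \coh(\mathbb{P}^2,\order{A})$ intertwines the two multiplication-by-$f$ actions. This is immediate from the defining local identification~$\Gamma(\distinguished(g), \order{A}) = A[g^{-1}]_0$ of \eqref{eq:defofalgebraonaffineopen}, under which~$f$ acts by multiplication on both sides; more abstractly, since~$f$ is central, multiplication by~$f$ is a natural endomorphism of the identity functor on each category, and any equivalence automatically respects such natural transformations. The equivalence therefore restricts to an equivalence between the two $f$-torsion subcategories, which by the previous two steps yields the desired~$\qgr B \simeq \coh(C,\order{B})$. The main bookkeeping step is confirming that the principal ideal sheaf~$\mathcal{I}_C\subset \mathcal{O}_{\mathbb{P}^2}$ of~$C$ really corresponds to~$f$ under the central Proj construction; this follows directly from~$C=\VV(f)$ together with \cref{lemma:veronese-construction}.
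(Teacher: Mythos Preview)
Your approach is essentially the paper's: identify $\qgr B$ and $\coh(C,\order{B})$ as the $f$-torsion (resp.\ $\mathcal{I}_C$-torsion) subcategories of $\qgr A$ and $\coh(\mathbb{P}^2,\order{A})$, and check that the central Proj equivalence matches these up. The paper is slightly more careful, factoring explicitly through the Veronese equivalence $\qgr A \simeq \qgr A^{(e)}$ and then verifying the torsion correspondence for $\qgr A^{(e)} \simeq \coh(\mathbb{P}^2,\order{A})$ in both directions by computing with local sections $\frac{fa}{g^n}\in\mathcal{I}(\distinguished(g))$ and with $\Gamma_*$.

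One caveat: your ``more abstractly'' sentence is not correct as stated. An equivalence $F\colon\mathcal{C}\to\mathcal{D}$ does induce an isomorphism $\End(\id_{\mathcal{C}}) \cong \End(\id_{\mathcal{D}})$, but nothing guarantees that the particular element ``multiplication by $f$'' on one side is carried to ``multiplication by $f$'' on the other---that requires knowing how the equivalence is built. Your concrete argument via the local identification \eqref{eq:defofalgebraonaffineopen} is the real content (equivalently: the equivalence is $R$-linear by construction), and the abstract remark should be dropped.
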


Before we prove this,
we first discuss 2 preliminary (and standard) lemmas.
For the first we include a proof for completeness' sake,
for the second we will prove a stacky generalization in \cref{appendix:lemmas}.
\begin{lemma}
  With~$A,B,f$ as in \cref{proposition:restriction-central-proj},
  the category~$\qgr B$ embeds into~$\qgr A$ as the subcategory
  \begin{equation}
    \{ M \in \qgr A \mid M \text{ is~$f$-torsion} \} \subset \qgr A.
  \end{equation}
  Likewise,
  the category~$\qgr A^{(e)}/fA^{(e)}$
  embeds into~$\qgr A^{(e)}$
  as the subcategory of~$f$-torsion modules.
\end{lemma}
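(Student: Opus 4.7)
The plan is to realize $\qgr B\hookrightarrow\qgr A$ as the descent of a fully faithful restriction functor $\gr B\to\gr A$, and then identify its essential image with the $f$-torsion subcategory.

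First, I will consider the surjection $\pi\colon A\twoheadrightarrow B=A/(f)$ and the induced restriction-of-scalars functor $\pi_*\colon\gr B\to\gr A$, which is exact and fully faithful with essential image the full subcategory $\mathcal{C}\colonequals\{M\in\gr A\mid fM=0\}$. Since $f$ is central, $\mathcal{C}$ is closed under subobjects, quotients and extensions, hence is Serre in $\gr A$. Moreover, $\pi_*$ intertwines the subcategories of torsion (i.e., locally bounded) modules, because $A_{\geq n}$ and $B_{\geq n}$ have matching cofinality under $\pi$. The universal property of Serre quotients then yields a fully faithful exact functor $\qgr B\hookrightarrow\qgr A$ whose essential image consists of those objects of $\qgr A$ admitting a graded representative in $\mathcal{C}$.

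The main step is to match this essential image with the full subcategory of \emph{$f$-torsion} objects of $\qgr A$, meaning those $M$ on which the morphism $f\cdot\colon M\to M(d)$ (well defined since $f$ is central) vanishes in $\qgr A$. One direction is tautological. For the converse, given such an $M$, I would take a graded representative $M'$ and pass to the quotient $M''$ by its maximal torsion submodule; then $M''\simeq M$ in $\qgr A$, and the image of $f\cdot\colon M''\to M''(d)$ is a torsion submodule of the torsion-free module $M''(d)$, hence vanishes. Thus $fM''=0$, placing $M''$ in $\mathcal{C}$.

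The analogous statement for $\qgr A^{(e)}/fA^{(e)}\hookrightarrow\qgr A^{(e)}$ follows by applying the same argument verbatim to $A^{(e)}$ in place of $A$; this is valid because $e\mid d$ (as the degree of any central element of $A$ lies in the monoid generated by the degrees of a set of homogeneous central generators, a submonoid of $e\mathbb{Z}_{\geq 0}$), so $f\in A^{(e)}$. The main obstacle I anticipate is the torsion-free reduction step, which requires that the maximal torsion submodule of $M'$ exists and is well-behaved; this holds here since $A$ is noetherian and the torsion subcategory is the standard one of modules locally killed by $A_{\geq n}$.
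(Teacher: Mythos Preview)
Your proposal is correct and follows essentially the same route as the paper: embed $\gr B$ into $\gr A$ as the $f$-torsion modules, then descend to the Serre quotients. The paper's proof is terser---it simply invokes the universal property of the Serre quotient once the kernel of $\gr B \to \qgr A$ is identified as $\fd B$, without spelling out fullness or the essential image---whereas you supply the extra step (passing to a torsion-free representative to show every $f$-torsion object of $\qgr A$ lifts to one in $\gr A$) that makes the identification of the image explicit.
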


\begin{proof}
  Observe that the category~$\gr B$ embeds into~$\gr A$ as the full subcategory of~$f$-torsion modules.
  Taking the quotient by the Serre subcategory~$\fd A \subset \gr A$, the composed functor
  \begin{equation}
    \gr B \to \gr A/\fd A = \qgr A,
  \end{equation}
  has kernel~$\fd B$, and therefore yields an embedding of~$\qgr B = \gr B/\fd B$ into~$\qgr A$
  via the universal property of the Serre quotient.

  The proof for the Veronese subalgebra is similar.
\end{proof}

\begin{lemma}
  \label{lemma:restricted-as-torsion}
  The category~$\coh(C,\order{B})$ embeds into~$\coh(S,\order{A})$ as the subcategory
  \begin{equation}
    \{ \mathcal{F} \in \coh(S,\order{A}) \mid \mathcal{I}\mathcal{F} = 0 \} \subset \coh(S,\order{A}),
  \end{equation}
  where~$\mathcal{I}\subset \mathcal{O}_S$ denotes the sheaf of ideals generated by~$f\in\ZZ(A^{(e)})$.
\end{lemma}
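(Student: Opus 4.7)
The plan is to show that the pushforward along the closed immersion $i\colon C\hookrightarrow S$ implements the desired embedding. First I would recall the classical fact that for a closed immersion with defining sheaf of ideals $\mathcal{I}\subset\mathcal{O}_S$, the pushforward functor $i_*\colon\coh C\to\coh S$ is fully faithful, with essential image the strictly supported subcategory
\[
\{\mathcal{G}\in\coh S\mid \mathcal{I}\mathcal{G}=0\}.
\]
The quasi-inverse is given by $i^{-1}$ together with the canonical identification of $\mathcal{I}$-annihilated $\mathcal{O}_S$-modules with $\mathcal{O}_S/\mathcal{I}$-modules, i.e.\ with $\mathcal{O}_C$-modules.

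Next I would upgrade this statement from $\mathcal{O}$-modules to modules over the sheaves of algebras. By construction $\order{B}=\order{A}|_C = i^*\order{A}$, and for a closed immersion this is just $\order{A}/\mathcal{I}\order{A}$; in particular $i_*\order{B}\cong\order{A}/\mathcal{I}\order{A}$ as a sheaf of algebras on $S$. Hence for any $\mathcal{M}\in\coh(C,\order{B})$ the pushforward $i_*\mathcal{M}$ is naturally an $i_*\order{B}$-module, and the surjection $\order{A}\twoheadrightarrow i_*\order{B}$ promotes it to an $\order{A}$-module annihilated by $\mathcal{I}$. This defines a functor
\[
i_*\colon\coh(C,\order{B}) \longrightarrow \{\mathcal{F}\in\coh(S,\order{A})\mid \mathcal{I}\mathcal{F}=0\}.
\]

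Full faithfulness reduces to the $\mathcal{O}$-linear case: an $\order{B}$-linear morphism is the same as an $\mathcal{O}_C$-linear morphism commuting with the $\order{B}$-action, and under the classical equivalence this corresponds to an $\mathcal{O}_S$-linear morphism on pushforwards that commutes with the $\order{A}/\mathcal{I}\order{A}$-action, which, by the factoring $\order{A}\twoheadrightarrow\order{A}/\mathcal{I}\order{A}$, is the same as an $\order{A}$-linear morphism. For essential surjectivity, given $\mathcal{F}\in\coh(S,\order{A})$ with $\mathcal{I}\mathcal{F}=0$, the underlying $\mathcal{O}_S$-module is of the form $i_*\mathcal{G}$ for a unique $\mathcal{G}\in\coh C$; moreover $\mathcal{I}\mathcal{F}=0$ forces the $\order{A}$-module structure to factor through $\order{A}/\mathcal{I}\order{A}=i_*\order{B}$, and transporting this structure along the equivalence on $\mathcal{O}$-modules equips $\mathcal{G}$ with an $\order{B}$-module structure whose pushforward is $\mathcal{F}$.

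The only nontrivial point is the identification $i_*\order{B}\cong\order{A}/\mathcal{I}\order{A}$ and the verification that the $\order{A}$-action on an $\mathcal{I}$-annihilated sheaf descends to an $\order{B}$-action on the corresponding sheaf on $C$; both are local questions that can be checked on affine opens, where $i_*$ is simply restriction of scalars along $A\twoheadrightarrow A/\mathcal{I}A$ for the appropriate rings, and both statements become tautological.
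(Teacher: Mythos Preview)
Your argument is correct and is exactly the ``standard result'' the paper invokes: the paper's own proof simply says this is standard (or a special case of the appendix \cref{lemma:embedding-as-torsion}), and the appendix lemma proves the same statement in the stacky setting by the same mechanism you use---pushforward along the closed immersion, the counit $i^*i_*\to\id$ being an isomorphism for a closed immersion, and the observation that an $\order{A}$-action on an $\mathcal{I}$-annihilated sheaf factors through $\order{A}/\mathcal{I}\order{A}\cong i_*\order{B}$. Your write-up just spells out the details the paper leaves implicit.
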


\begin{proof}
  This is a standard result,
  alternatively one can consider it a special case of \cref{lemma:embedding-as-torsion}.
\end{proof}

\begin{proof}[Proof of \cref{proposition:restriction-central-proj}]
  The Veronese equivalence~$\qgr A \simeq \qgr A^{(e)}$,
  which exists because~$A$ is generated in degree~1,
  preserves the~$f$-torsion objects,
  hence it suffices to show that the equivalence~$\qgr A^{(e)} \simeq \coh(S,\order{A})$
  identifies~$f$-torsion objects with~$\mathcal{I}$-torsion objects,
  so that we can conclude by \cref{lemma:restricted-as-torsion}.

  Suppose~$M \in \qgr A^{(e)}$ is~$f$-torsion, and let~$\mathcal{M} \in \coh(X,\order{B})$ be the corresponding sheaf.
  Then for any standard open~$D(g) \subset X$,
  and any local section~$\frac{fa}{g^n} \in \mathcal{I}(\mathrm{D}(g)) = (f\cdot\ZZ(A^{(e)})[g^{-1}])_0$ of~$\mathcal{I}$,
  we have
  \begin{equation}
    \frac{fa}{g^n} \cdot \mathcal{M}(D(g))
    =\frac{fa}{g^n}\cdot (M[g^{-1}])_0
    =\left(\frac{fa}{g^n}\cdot M[g^{-1}]\right)_0
    =0.
  \end{equation}
  Hence~$\mathcal{M}$ is a~$\mathcal{I}$-torsion sheaf.
  Conversely, let~$\mathcal{M} \in \coh(X,\order{B})$ be a~$\mathcal{I}$-torsion sheaf.
  Then the corresponding object of~$\qgr A^{(e)}$ is isomorphic to
  \begin{equation}
    \Gamma_*(\mathcal{M}) \in \qgr \Gamma_*(\order{B}).
  \end{equation}
  Since~$\mathcal{M}$ is~$\mathcal{I}$-torsion,
  it follows that~$\Gamma_*(\mathcal{M})$ is torsion for the ideal~$\Gamma_*(\mathcal{I}) \subset \Gamma_*(\order{B})$.
  Viewing~$\Gamma_*(\mathcal{M})$ now as an~$A^{(e)}$-module
  via the canonical map~$A^{(e)} \to \Gamma_*(\order{B})$,
  we find that~$f$ acts via the global section~$f \in \Gamma(\mathcal{I}(n)) \subset \Gamma_*(\mathcal{I})$
  with~$n = \deg f$.
  Hence~$\Gamma_*(\mathcal{M})$ is an~$f$-torsion module, which shows the equivalence.
\end{proof}

\section{Noncommutative conics}
\label{section:noncommutative-conics}
The first interesting example of noncommutative plane curves
is given by noncommutative conics,
i.e.,
we consider a~3-dimensional Artin--Schelter regular algebra~$A$
and a central element~$f\in\ZZ(A)_2$ .
These noncommutative conics have been studied and classified in \cite{MR4531545,MR4794245}
(with a special case already appearing in \cite{MR4575408}).

\begin{itemize}
  \item In \cite{MR4531545} the starting point is that of 3-dimensional Calabi--Yau Artin--Schelter regular algebras
    admitting at least one central element of degree~2,
    which are subsequently classified in Section~3 of op.~cit.
    All but one of these is finite-over-the-center.

  \item In \cite{MR4794245} on the other hand,
    the starting point is that of graded Clifford algebras.
    These naturally admit a net of central elements of degree~2,
    and are finite-over-their-center,
    making them suitable for the methods introduced in \cref{section:noncommutative-plane-curves}.
    In \cref{remark:cy-vs-clifford} we explain how these two choices of
    ambient Artin--Schelter regular algebras compare.
\end{itemize}

The methods introduced in \cref{subsection:noncommutative-via-central-curves}
give a new perspective on noncommutative conics.
In \cref{subsection:clifford-conics-orders} we will describe them
using sheaves of algebras,
and in \cref{subsection:clifford-conics-stacks} we will describe them
using the geometry of stacks.
In \cref{subsection:clifford-conics-comments}
we will (re)classify noncommutative conics
from our perspective,
compare our description to the existing descriptions in the literature,
and obtain some corollaries.

\subsection{Graded Clifford algebras and Clifford conics}
\label{subsection:graded-clifford-algebras}
A natural source for maximal orders on~$\mathbb{P}^n$
is given by the central Proj construction applied to graded Clifford algebras,
as studied in \cite[Section 4]{MR1356364}.
We will soon specialise to~$n=2$,
but for now we will work in the general case.

We start by recalling the definition of a graded Clifford algebra.
Let~$R = \field[z_0,\ldots,z_n]$ be the polynomial ring in~$n+1$ variables with~$\deg z_i = 2$.
Consider~$M \in \Mat_{n+1}(R_2)$ a symmetric matrix with linear entries in the~$z_i$.
It is useful to write
\begin{equation}
  M = M_0 z_0 + \ldots + M_{n} z_n,
\end{equation}
where~$M_i \in \Mat_{n+1}(\field)$ are symmetric matrices.

The \emph{graded Clifford algebra} associated to~$M$ is defined as~$\Cl(M) = R\langle x_0, \ldots, x_n\rangle/I$
where~$I$ is the two-sided ideal generated by
\begin{equation}
  x_i x_j + x_j x_i = M_{i,j} = \sum_{k= 0}^n (M_k)_{i,j} z_k,\quad 0\le i,j \le n.
\end{equation}
By setting~$\deg x_i = 1$, this becomes a graded algebra generated in degree 1 and 2
which is finitely generated as a module over the central subring~$R$.
To study the algebraic properties of~$\Cl(M)$ it is useful to consider its associated geometry.

\paragraph{A linear system of quadrics associated to a graded Clifford algebra.}
Each of the matrices~$M_i$ defines via its associated quadratic form
a quadric hypersurface~$Q_i\subseteq \mathbb{P}^n$.
Therefore we can view the data~$M_i$ as an~$n$-dimensional linear system
\begin{equation}
  \mathcal{Q}_M = \field Q_0 + \ldots + \field Q_n \subset \HH^0(\mathbb{P}^n,\mathcal{O}_{\mathbb{P}^n}(2)).
\end{equation}
The geometry of this linear system tells us
when the graded Clifford algebra can be considered
as a noncommutative projective space.
\begin{proposition}[{\cite[Proposition 7]{MR1356364}}]
  The graded Clifford algebra~$\Cl(M)$ is an Artin--Schelter regular~$\field$-algebra
  if and only if~$\mathcal{Q}_M$ is basepoint-free.
\end{proposition}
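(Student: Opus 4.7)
The plan is to reduce the Artin--Schelter regularity of~$\Cl(M)$ to a non-zerodivisor condition on degree-1 elements, which turns out to be controlled precisely by the base locus of~$\mathcal{Q}_M$. As a preliminary observation, a standard PBW/diamond lemma argument applied to the defining relations shows that $\Cl(M)$ is always free as a right $R$-module of rank~$2^{n+1}$, with basis the ordered monomials $x_{i_1}\cdots x_{i_k}$ for $i_1<\cdots <i_k$. In particular its Hilbert series is $\frac{(1+t)^{n+1}}{(1-t^2)^{n+1}} = \frac{1}{(1-t)^{n+1}}$, matching that of a polynomial ring on~$n+1$ degree-1 generators; the growth condition for AS-regularity is therefore automatic, and only the finite global dimension and Gorenstein conditions need to be addressed.

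The key computation I would record up front is that for any $y \in \field^{n+1}$ the degree-$1$ element $X \colonequals \sum_i y_i x_i$ satisfies $2X^2 = \sum_{i,j} y_iy_j(x_ix_j+x_jx_i) = \sum_{i,j,k} (M_k)_{ij} y_iy_j z_k = \sum_k q_k(y) z_k \in R$, where $q_k(y) = y^T M_k y$ is the quadratic form defining~$Q_k$. Necessity of basepoint-freeness then follows quickly: a base point~$y^* \in \mathbb{P}^n$ forces every $q_k(y^*)$ to vanish, giving a nonzero~$X$ with $X^2 = 0$, which is incompatible with AS-regularity since any AS-regular algebra is prime and in particular cannot admit a nonzero nilpotent linear form.

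For the converse, basepoint-freeness implies that for every nonzero~$y$ some $q_k(y)$ is nonzero, so $X^2 = \sum_k q_k(y) z_k$ is a nonzero linear form in the polynomial ring~$R$, hence a non-zerodivisor there. Using the natural $\bbZ/2$-grading $\Cl(M) = \Cl(M)^+ \oplus \Cl(M)^-$ by the parity of words in the~$x_i$, right multiplication by~$X$ interchanges the two summands and squares to multiplication by~$X^2 \in R$; since $\Cl(M)$ is $R$-free, this composite is injective, whence $\cdot X$ itself is injective. Therefore every nonzero degree-1 element of~$\Cl(M)$ is a non-zerodivisor.

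The main obstacle is upgrading this non-zerodivisor property to the full AS-regularity package, namely global dimension~$n+1$ together with the Gorenstein condition. My approach would be to construct an explicit Koszul-type free resolution of the trivial module~$\field$ whose differentials are given by right multiplication by the (regular) linear forms, using the free $R$-module structure to track the terms; Hilbert series bookkeeping combined with the injectivity just established then forces the resolution to terminate at length exactly~$n+1$ and to be self-dual up to a twist, yielding the Gorenstein property. Alternatively, one may pass through the central Proj via \cref{proposition:le-bruyn-maximal order}, reducing the verification to a Cohen--Macaulay/duality statement for the induced sheaf of orders on~$\mathbb{P}^n$, where the local structure of Clifford algebras over nondegenerate quadrics makes the checks straightforward.
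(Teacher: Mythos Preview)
The paper does not supply its own proof of this proposition; it is simply quoted from Le~Bruyn, so there is no in-text argument to compare against. I will therefore assess your proposal on its own merits.

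Your necessity direction is sound: the square computation is correct, and a nonzero nilpotent linear form is indeed incompatible with AS-regularity (connected graded noetherian AS-regular algebras are domains).

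The sufficiency direction, however, has a genuine gap. You correctly establish that basepoint-freeness makes every nonzero degree-$1$ element a non-zerodivisor, but you then explicitly flag the passage from this to AS-regularity as ``the main obstacle'' and offer only two sketches. Neither is adequate as stated. The second alternative, via \cref{proposition:le-bruyn-maximal order}, is circular: that result already \emph{assumes} finite global dimension, which is precisely what you are trying to prove. The first alternative --- building a Koszul-type resolution of~$\field$ whose differentials are ``right multiplication by the regular linear forms'' --- is not a well-posed construction. The~$x_i$ do not commute, so there is no straightforward Koszul complex with differential $\sum e_i\otimes x_i$; injectivity of multiplication by each individual linear form does not by itself force exactness of any such complex, nor does Hilbert-series bookkeeping alone pin down self-duality. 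You would need to write down the complex explicitly (for graded Clifford algebras this is a Clifford-modified Koszul complex over~$R$, using that~$\Cl(M)$ is free of rank~$2^{n+1}$ over the polynomial ring) and verify exactness and self-duality directly; alternatively one argues via the central regular sequence~$z_0,\ldots,z_n$, reducing to the Frobenius property of the exterior-algebra quotient, and then invokes a lifting criterion for AS-regularity along regular central elements. Either route requires real work beyond what you have written.
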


Since~$R\subseteq \Cl(M)$ is central,
the first step of the central Proj construction
from \cref{subsection:central-proj}
can be carried out over~$\mathbb{P}^n = \Proj(R)$.
Denote by~$(\mathbb{P}^n,\order{A})$ the corresponding noncommutative space
and by~$s\colon \relSpec_{\mathbb{P}^n}\mathcal{Z}\to \mathbb{P}^n$
the central Proj as defined in \eqref{equation:central-proj-affine}.

If the associated linear system of quadrics~$\mathcal{Q}_M$ is basepoint-free,
the~$\mathcal{O}_{\relSpec_{\mathbb{P}^n}\mathcal{Z}}$-order~$\order{A}$
is maximal by \cite[Proposition 1]{MR1356364}.

Recall that the \emph{discriminant} of the linear system~$\mathcal{Q}_M$
is the closed subscheme~$\Delta(\mathcal{Q}_M)\subset \mathbb{P}^n$,
where the quadrics are singular.
The following lemma explains what the central Proj looks like for~$n$ even.

\begin{lemma}
  \label{lemma:ramification}
  Assume that~$n$ is even
  and~$\mathcal{Q}_M$ is basepoint-free.
  Then $\relSpec_{\mathbb{P}^n}\mathcal{Z}\cong \mathbb{P}^n$
  and the ramification divisor of the~$\mathcal{O}_{\mathbb{P}^n}$-order $\order{A}$
  is given by
  \begin{equation}
    S_1 \colonequals \{p \in \mathbb{P}^n\mid \rk M(p) \le n\}.
  \end{equation}
\end{lemma}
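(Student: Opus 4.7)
The plan is to work locally on the standard affine opens $D(z_i) \subset \mathbb{P}^n$, where by \eqref{eq:defofalgebraonaffineopen} we have $\Gamma(D(z_i),\order{A}) = \Cl(M)[z_i^{-1}]_0$. The key local observation is that the degree-$0$ part of $\Cl(M)[z_i^{-1}]$ consists precisely of those elements whose $x$-component has even $x$-parity, since the $x_j$ have degree $1$ and $z_i$ has degree $2$. Specializing each $z_j$ at a closed point $p \in D(z_i)$ then identifies $\order{A}(p)$ with the \emph{even} Clifford algebra $\Cl^+(q_p)$ of the $(n+1)$-dimensional quadratic form $q_p$ with Gram matrix $M(p) = \sum_j z_j(p)M_j$.

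For the first assertion $\relSpec_{\mathbb{P}^n}\mathcal{Z} \cong \mathbb{P}^n$, it suffices to show $\mathcal{Z} = \mathcal{O}_{\mathbb{P}^n}$. Since $n+1$ is odd, the classical structure theorem for even Clifford algebras of odd-dimensional nondegenerate quadratic forms tells us that at the generic point, $\order{A}_\eta \cong \Cl^+(q_\eta)$ is a central simple algebra over $\field(\mathbb{P}^n)$; hence $\mathcal{Z}_\eta = \field(\mathbb{P}^n)$. Because $\mathcal{Z}$ is a coherent $\mathcal{O}_{\mathbb{P}^n}$-subalgebra of $\order{A}$ with generic stalk $\field(\mathbb{P}^n)$, it is in particular a finite $\mathcal{O}_{\mathbb{P}^n}$-subalgebra of the function field; since $\mathcal{O}_{\mathbb{P}^n}$ is integrally closed, any such finite subalgebra equals $\mathcal{O}_{\mathbb{P}^n}$ itself, giving $\mathcal{Z} = \mathcal{O}_{\mathbb{P}^n}$.

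For the second assertion, I apply the fiberwise characterization of Azumaya algebras \cite[Proposition IV.2.1(b)]{MR559531}: since $\order{A}$ is locally free on the smooth scheme $\mathbb{P}^n$, it is Azumaya at $p$ if and only if the fiber $\order{A}(p) \cong \Cl^+(q_p)$ is central simple over $k(p)$. By the same structure theorem, this holds if and only if $q_p$ is nondegenerate, equivalently $\det M(p) \neq 0$, equivalently $\rk M(p) = n+1$. Therefore the ramification divisor is exactly $S_1 = \{\det M = 0\}$, a hypersurface of degree $n+1$ cut out by $\det M$, which is not identically zero by a standard consequence of the basepoint-freeness of $\mathcal{Q}_M$.

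The main obstacle is the concrete identification $\order{A}(p) \cong \Cl^+(q_p)$. Matching dimensions is immediate from the parity observation, but verifying that the algebra structure descends correctly requires a careful presentation of $\Cl(M)[z_i^{-1}]_0$ via generators of the form $x_j x_k/z_i$ subject to the quadratic relations induced by $x_j x_k + x_k x_j = M_{jk}$. A conceptually cleaner alternative is to base change along an \'etale cover adjoining $\sqrt{z_i}$, which turns the $\mathbb{Z}$-grading into the classical $\mathbb{Z}/2$-grading on an ordinary Clifford algebra, where the identification with $\Cl^+(q_p)$ is transparent.
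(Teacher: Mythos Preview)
Your argument is correct but takes a different route from the paper. The paper quotes the classical center computation $\ZZ(\Cl(M)) = \field[z_0,\ldots,z_n,\delta]$ with $\delta^2 = \det M$ and $\deg\delta = n+1$ (citing \cite{MR741798,MR4794245}); since $n$ is even, $\delta$ has odd degree while the $z_i$ have degree~$2$, so \cref{lemma:veronese-construction} applied with these coprime generator degrees gives $\relSpec_{\mathbb{P}^n}\mathcal{Z}\cong\mathbb{P}^n$ immediately, and for the ramification the paper simply cites \cite[Proposition~9]{MR1356364}. Your approach instead identifies each fibre $\order{A}(p)$ with the even Clifford algebra $\Cl^+(q_p)$ and reads off both conclusions from the structure theory of even Clifford algebras in odd rank together with the fibrewise Azumaya criterion and normality of $\mathbb{P}^n$. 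This is more self-contained and makes transparent \emph{why} the ramification is the degeneracy locus---namely because $\Cl^+$ of a degenerate odd-rank form fails to be simple---whereas the paper trades that insight for brevity by deferring to Le~Bruyn. Both arguments implicitly need $\det M\not\equiv 0$; you extract this from basepoint-freeness via Bertini, while in the paper it is absorbed into the cited references.
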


\begin{proof}
  Since~$n$ is even, the center~$\ZZ(\Cl(M))$ is~$R \oplus R \delta = \field[z_0,\ldots, z_n, \delta]$,
  where $\delta \in \Cl(M)$ is an element of degree $n+1$ such that $\delta^2 = \det M$.
  A proof for this result can be found in \cite{MR741798},
  or more recently in \cite[Theorem 3.18]{MR4794245}.

  From \cref{lemma:veronese-construction} it follows that
  the underlying variety of the central Proj construction for
  the graded Clifford algebra
  is given by~$\Proj(\ZZ(\Cl(M))^{(2)}) = \Proj(\field[z_0,\ldots, z_n])\cong \mathbb{P}^{n}$.
  By \cite[Proposition 9]{MR1356364} the ramification divisor of the associated~$\mathcal{O}_{\mathbb{P}^n}$-order~$\order{A}$
  has ramification~$\Delta=S_1$.
\end{proof}

\paragraph{Clifford conics}
In the case~$n=2$, we obtain a net of conics as our linear system in the construction of a graded Clifford algebra.
By \cite[Table 2]{MR0432666}, for a basepoint-free net of conics~$\mathcal{Q}_M$,
the discriminant must be one of the following (keeping the notation from op.~cit.):
\begin{enumerate}
  \item[(A)] an elliptic curve,
  \item[(B)] a nodal cubic,
  \item[(D)] a conic meeting a line in general position, or
  \item[(E)] three lines in general position.
\end{enumerate}

The classification of nets of conics is sufficient for the description of
the ramification divisor of~$\mathcal{O}_{\mathbb{P}^2}$-orders obtained from
the central Proj construction applied to graded Clifford algebras.
More precisely, we have the following,
where an order on a smooth surface is said to be \emph{terminal} as in \cite[Definition~2.5]{MR2180454}
if its discriminant~$\Delta$ has normal crossings,
with the cyclic covers of the components ramified at the nodes,
and at each node one cyclic cover is totally ramified of degree~$e$
with the other being ramified with index~$e$ and degree~$ne$ for some~$n\geq 2$.
This will allow the use of \'etale local models for terminal orders \cite[Section~2.3]{MR2180454}.

\begin{lemma}
  \label{lemma:terminal}
  The~$\mathcal{O}_{\mathbb{P}^2}$-order~$\order{A}$
  constructed in \cref{lemma:ramification}
  is terminal
  and its ramification divisor is $\Delta = \Delta(\mathcal{Q}_M)$.
\end{lemma}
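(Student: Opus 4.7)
The plan is to deduce both claims simultaneously from the classification of basepoint-free nets of conics recalled above. The identification $\Delta = \Delta(\mathcal{Q}_M)$ is essentially for free: \cref{lemma:ramification} identifies the ramification divisor of $\order{A}$ with the rank-drop locus $S_1 = \{p \in \mathbb{P}^2 \mid \det M(p) = 0\}$, and by definition the discriminant $\Delta(\mathcal{Q}_M)$ of the net of conics is cut out by the same polynomial $\det M$. Hence the two divisors agree as subschemes of $\mathbb{P}^2$.

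To show terminality in the sense of \cite[Definition~2.5]{MR2180454}, I would first use the classification (A)--(E) to establish normal crossings. In each of these four cases the cubic $\Delta(\mathcal{Q}_M)$ has only nodal singularities and the branches at a node meet transversely. Hence $\Delta$ is a divisor with normal crossings on the smooth surface $\mathbb{P}^2$, satisfying the first requirement of the definition of a terminal order.

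It remains to match the local structure of $\order{A}$ along $\Delta$ with the standard \'etale-local forms for terminal orders recorded in \cite[Section~2.3]{MR2180454}. For this I would work \'etale-locally and diagonalize the symmetric matrix $M$. At a smooth point of a component of $\Delta$ the rank of $M$ drops by one, so after an \'etale change of basis $M$ becomes $\operatorname{diag}(u_1, u_2, f)$ with $u_1, u_2$ units and $f$ a local equation of the component; the Clifford relations then exhibit $\order{A}$ \'etale-locally as the standard quaternionic terminal order with ramification index~$2$. At a node of $\Delta$ the rank of $M$ drops by two, and the analogous diagonalization produces $\operatorname{diag}(u, f_1, f_2)$ with $f_1, f_2$ local equations of the two branches through $p$, giving precisely the standard \'etale-local model of a terminal order at a nodal intersection with ramification index~$2$ along each branch.

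The hard part will be the \'etale-local diagonalization of the linear matrix $M$ in a form canonical enough to match the normal forms of \cite[Section~2.3]{MR2180454} directly, and in particular to read off the covering behaviour (total ramification of degree~$2$ on each branch) at the nodes. Once this local case analysis is completed, it follows that $\order{A}$ is terminal and that its ramification divisor equals $\Delta(\mathcal{Q}_M)$.
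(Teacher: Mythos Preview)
Your proposal is correct and takes a genuinely different route from the paper.

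The paper argues terminality more abstractly: after noting that the discriminant has normal crossings from the (A)--(E) list, it works at a height-one prime $p$ and observes that, since $\order{A}$ is a quaternion order, the residue algebra $\order{A}_p/\rad\order{A}_p$ is either $\Mat_2(\field(p))$ (by Tsen) or a degree-$2$ field extension of $\field(p)$. In the latter case the ramification index is forced to be $2$, so every component of $\Delta$ carries a double cover, and these are ramified over the nodes; since all covers have degree $2$ the terminal condition is automatic.

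Your approach instead produces the \'etale-local normal forms of \cite[\S2.3]{MR2180454} by hand, via diagonalization of the symmetric matrix $M$. This is more computational but also more informative: the explicit presentations $\operatorname{diag}(u_1,u_2,f)$ at smooth points and $\operatorname{diag}(u,f_1,f_2)$ at nodes are precisely the local models \eqref{equation:local-model-smooth} and \eqref{equation:local-model-singular} invoked later in the proof of \cref{theorem:clifford-conic-orders-classification}. So your argument would in fact feed directly into the subsequent analysis, whereas the paper defers the explicit local forms to that later proof. The paper's argument, on the other hand, avoids the diagonalization entirely and extracts the ramification data purely from the quaternionic structure and Tsen's theorem, which is cleaner if one only wants terminality and not the concrete local picture.

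One small point to verify in your approach: the claim that the rank of $M$ drops by exactly two at each node of $\Delta$ (and not just by one, with the single vanishing entry itself acquiring a node) is a feature of basepoint-free nets that deserves a sentence of justification, e.g.\ by reference to Wall's classification or by a direct check on representatives of types (B), (D), (E).
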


This explains the labeling of the discriminants in \cref{theorem:van-gastel}.

\begin{proof}
  The ramification of the central Proj of a graded Clifford algebra was determined in \cref{lemma:ramification}.
  It remains to show that the~$\mathcal{O}_{\mathbb{P}^2}$-order is terminal.
  Since $\mathcal{Q}_M$ is basepoint-free by assumption,
  the ramification divisor is a normal crossing divisor.

  Given a point~$p\in \mathbb{P}^{2}$ of codimension 1,
  the~$\field(p)$-algebra $\order{A}_p/\rad\order{A}_p$ is simple artinian
  and~$\rad\order{A}_p$ is principal. 
  Therefore, the quotient~$\order{A}_p/\rad\order{A}_p$ is
  either isomorphic to $\Mat_2(\field(p))$,
  since~$\field$ is algebraically closed and thus Tsen's theorem applies to~$\field(p)$,
  or to a field extension $L/\field(p)$ of degree 2.
  To see this, one can use the classification of quaternion~$\field$-algebras of \cite[Theorem 4.5]{MR2905553}.

  In the first case the~$\mathcal{O}_{\mathbb{P}^2,p}$-order is Azumaya and hence unramified.
  In the second case, the ramification index is~$e_p = 2$
  and we obtain a two-fold covering over each irreducible component of the ramification divisor
  which is ramified over the points of intersection of the irreducible components.
  Because we only have double covers,
  the~$\mathcal{O}_{\mathbb{P}^2}$-order is terminal.
\end{proof}

For more on the geometry of graded Clifford algebras from the perspective of sheaves of orders,
one is referred to \cite{1811.08810}.

Because 3-dimensional graded Clifford algebras have a net of central elements of degree~2,
we can define noncommutative conics.
We introduce the following terminology.

\begin{definition}
  \label{definition:clifford-conic}
  Let~$A$ be the 3-dimensional graded Clifford algebra
  associated to the basepoint-free net of conics~$\mathcal{Q}_M$.
  Let~$f\in\ZZ(A)_2=\field z_0+\field z_1+\field z_2$ be a central element of degree~2,
  and set~$B\colonequals A/(f)$.
  We call the abelian category~$\qgr B$ a \emph{Clifford conic}.
\end{definition}

This choice of terminology sets them apart from other sources of noncommutative conics
studied in the literature.
The first class of noncommutative conics in the literature are ``skew conics''.
These are defined with respect to
a skew polynomial algebra~$\field\langle x_0,x_1,x_2\rangle/(x_ix_j-q_{i,j}x_jx_i)$
where~$q_{i,j}=\pm1$ \cite{MR4575408}.
The second source are noncommutative conics inside Calabi--Yau quantum projective planes \cite{MR4531545}.
We will compare these classes of noncommutative conics
to Clifford conics in \cref{subsection:clifford-conics-comments}.

\begin{remark}
  \label{remark:cy-vs-clifford}
  By \cite[Theorem~1.2]{MR4794245} a 3-dimensional graded Clifford algebra is Calabi--Yau.
  Conversely, the 4~relevant isomorphism types of graded Calabi--Yau algebras with a central net of conics
  from \cite[Theorem~3.6]{MR4531545}
  are graded Clifford algebras, by using the symmetric matrices
  \begin{equation}
    \begin{pmatrix}
      2z_0 & 0 & 0 \\
      0 & 2z_1 & 0 \\
      0 & 0 & 2z_2
    \end{pmatrix},
    \begin{pmatrix}
      2z_0 & 0 & 0 \\
      0 & 2z_1 & -z_0 \\
      0 & -z_0 & 2z_2
    \end{pmatrix},
    \begin{pmatrix}
      2z_0 & 0 & -z_1 \\
      0 & 2z_1 & -z_0 \\
      -z_1 & -z_0 & 2z_2
    \end{pmatrix},
    \text{resp.~}
    \begin{pmatrix}
      2z_0 & -\lambda z_2 & -\lambda z_1 \\
      -\lambda z_2 & 2z_1 & -\lambda z_0 \\
      -\lambda z_1 & -\lambda z_0 & 2z_2
    \end{pmatrix},
  \end{equation}
  in the order of \cite{MR4531545}.
\end{remark}

\subsection{Properties of orders on curves}
\label{subsection:properties-orders}
In order to understand non-smooth noncommutative curves,
we recall the notion of Gorenstein and Bass orders.
Denote by
\begin{equation}
  \omega_{\order{A}} = \intHom_{\mathcal{O}_Y}(\order{A},\mathcal{O}_Y)
\end{equation}
the dualizing bimodule of the $\mathcal{O}_Y$-order~$\order{A}$.
Moreover, let~$\order{A}_\eta$ be the localization of~$\order{A}$ at the generic point~$\eta$ of~$Y$. Note that~$\order{A}_\eta$ is always isomorphic to a matrix algebra over the function field of~$Y$ since~$\field$ is algebraically closed.
Following \cite{MR4199211},
we give the following definition of Gorenstein and Bass orders.
\begin{definition}
  Let~$\order{A}$ be an~$\mathcal{O}_Y$-order.
  \begin{enumerate}
    \item We say that~$\order{A}$ is \emph{Gorenstein}
      if the dualizing bimodule~$\omega_{\order{A}}$ is locally free as an~$\order{A}$-bimodule.
    \item If every order~$\order{A}^\prime\supset \order{A}$ in~$\order{A}_\eta$
      containing~$\order{A}$ is Gorenstein,
      we say that~$\order{A}$ is \emph{Bass}.
  \end{enumerate}
\end{definition}

Nodal orders have been studied in a series of papers \cite{burban2024aspectstheorynodalorders, burban2019noncommutativenodalcurvesderived,MR2029026}.
To define a \emph{nodal order} globally we first need to recall the local definition from \cite[Definition 3.1]{burban2024aspectstheorynodalorders}.

Let $R$ be a discrete valuation ring and $\Lambda \subset \text{Mat}_n(\Frac(R))$ be an $R$-order. We say that $\Lambda$ is \emph{nodal} if it is contained in a hereditary $R$-order $\Lambda^\prime$ in $\text{Mat}_n(\Frac(R))$ such that $\rad(\Lambda^\prime) = \rad(\Lambda)$ and for every finitely generated simple left $\Lambda$-module $S$ the length of $\Lambda^\prime\otimes_\Lambda S$ as a $\Lambda$-module is bounded by 2.

With this setup we are able to define nodal orders on a curve, see \cite[Definition 4.1]{burban2024aspectstheorynodalorders}.
\begin{definition}
  An $\mathcal{O}_Y$-order~$\order{A}$ is \emph{nodal} if for every point~$p\in Y$ the $\mathcal{O}_{Y,p}$-order~$\order{A}_p$ is nodal.
\end{definition}

For an order~$\order{A}$ on a curve~$Y$ we thus have the following implications
\begin{equation}
  \order{A}\text{ maximal}
  \quad\Rightarrow\quad\order{A}\text{ hereditary}
  \quad\Rightarrow\quad\order{A}\text{ nodal}
  \quad\Rightarrow\quad\order{A}\text{ Bass}
  \quad\Rightarrow\quad\order{A}\text{ Gorenstein.}
\end{equation}

We note that these four properties of orders can be checked locally.
\begin{lemma}
  \label{lemma:order-properties-local}
  Let~$\order{A}$ be an order on a smooth curve~$Y$.
  The following are equivalent:
  \begin{enumerate}
    \item The~$\mathcal{O}_Y$-order~$\order{A}$ is maximal, resp.~hereditary, Bass, or~Gorenstein.
    \item For each~$p\in Y$ the~$\mathcal{O}_{Y,p}$-order~$\order{A}_p$ is maximal, resp.~hereditary, Bass, or~Gorenstein.
    \item For each~$p\in Y$ the~$\widehat{\mathcal{O}}_{Y,p}$-order~$\widehat{\order{A}}_p$ is maximal, resp.~hereditary, Bass, or~Gorenstein.
  \end{enumerate}
\end{lemma}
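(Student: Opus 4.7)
The plan is to show that each of the four properties---maximal, hereditary, Bass, and Gorenstein---is Zariski-local in nature, giving $(1)\Leftrightarrow(2)$, and that each is preserved and reflected under the faithfully flat completion map $\mathcal{O}_{Y,p}\to\widehat{\mathcal{O}}_{Y,p}$, giving $(2)\Leftrightarrow(3)$.

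For $(1)\Leftrightarrow(2)$, one treats each property in turn. Hereditary and Gorenstein are defined by projectivity (resp.\ local freeness as a bimodule) of certain sheaves, and projectivity is manifestly Zariski-local, so these conditions on $\order{A}$ hold if and only if they hold at each stalk~$\order{A}_p$. Maximality is classical \cite{MR0393100}: a proper global over-order $\order{A}'\supsetneq\order{A}$ inside the sheaf of generic simple algebras~$\order{A}_\eta$ exists if and only if it exists at some stalk, and a stalk-level over-order~$\Lambda'\supsetneq\order{A}_p$ extends globally by gluing $\Lambda'$ to $\order{A}_q$ for $q\neq p$. The Bass property reduces similarly, using this correspondence between over-orders of $\order{A}$ and compatible systems of over-orders of its stalks together with the locality of the Gorenstein condition just established.

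For $(2)\Leftrightarrow(3)$, the completion map $\mathcal{O}_{Y,p}\to\widehat{\mathcal{O}}_{Y,p}$ is faithfully flat. Projectivity of finitely generated modules, and hence heredity and the Gorenstein property applied to~$\omega_{\order{A}}$, transfer in both directions by faithfully flat descent, using that completion commutes with $\Hom$ and $\otimes$ for such modules. Maximality of $\order{A}_p$ is equivalent to maximality of $\widehat{\order{A}}_p$ by the classical theory of orders over local rings in \cite{MR0393100}, which simultaneously sets up a bijection between over-orders of $\order{A}_p$ and over-orders of $\widehat{\order{A}}_p$ via completion and intersection. The Bass property transfers through this bijection using the Gorenstein transfer already proved, and the nodal property transfers because the radical and the finitely generated simple modules both commute with completion.

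The main obstacle is verifying this bijection between over-orders at the stalk and at the completion. Since $\Frac(\widehat{\mathcal{O}}_{Y,p})$ is strictly larger than $\Frac(\mathcal{O}_{Y,p})$, the generic stalk of $\widehat{\order{A}}_p$ lives in a larger central simple algebra than that of $\order{A}_p$, so one must check that completion and intersection are mutually inverse operations on the lattice of over-orders sandwiched between $\order{A}_p$ and some fixed maximal over-order. This is standard: any over-order is an $\mathcal{O}_{Y,p}$-module of finite type in $\order{A}_\eta$, and such lattices are determined by their completions. Once this bijection is in place, the Bass and nodal conditions reduce to calculations with the radical, the dualizing bimodule, and lengths of simples, all of which are compatible with faithfully flat base change.
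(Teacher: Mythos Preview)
Your proposal is correct and follows essentially the same route as the paper's proof: for $(1)\Leftrightarrow(2)$ the paper simply cites Voight's book \cite{MR4279905}, and for $(2)\Leftrightarrow(3)$ the paper invokes Reiner \cite{MR0393100}, in particular the inclusion-preserving bijection between lattices over a discrete valuation ring and its completion (Reiner's Theorem~(5.2)), which is exactly the ``bijection between over-orders at the stalk and at the completion'' you identify as the main point. The paper's proof is in fact considerably terser than yours---it is little more than a list of references---so your version supplies the conceptual scaffolding that the paper leaves implicit; the one extraneous element is your discussion of the nodal property, which is not part of the statement.
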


\begin{proof}
  The equivalence (1) $\Leftrightarrow$ (2) can be found in the literature, see, e.g.~\cite[Lemma 10.4.3 and \S21.4.4, 24.2.2, and~24.5.2]{MR4279905}.
  The equivalence (1) $\Leftrightarrow$ (3) is given in \cite[Corollary (11.6)]{MR0393100} for maximal orders and in \cite[Theorem (40.5)]{MR0393100} for hereditary orders.

  The equivalence (2) $\Leftrightarrow$ (3) for Gorenstein and Bass orders follows from the inclusion-preserving one-to-one correspondence \cite[Theorem (5.2)]{MR0393100} between lattices over a discrete valuation ring and its completion.
\end{proof}

\paragraph{Properties of quaternion orders}
In the following we analyse the complete-local structure of the orders
at points~$p\in \Delta_L$ in the discriminant
arising in \cref{subsection:clifford-conics-orders}.
Fixing~$p \in \Delta_L$ and taking~$R = \field[[t]] = \widehat{\mathcal{O}}_{L,p}$,
we will repeatly write
\begin{equation}
  (t^{e_1},t^{e_2})_2^{R} \colonequals R\langle i,j \rangle/(i^2-t^{e_1}, j^2-t^{e_2}, ij+ji)
\end{equation}
for the quaternion~$R$-order
in $\Mat_2(\field((t)))$ for $e_1,e_2\in \mathbb{Z}_{\ge 0}$,
where $e_1\le e_2$.
Note that
\begin{equation}
  (t^{e_1},t^{e_2})_2^{R}\cong(t^{f_1},t^{f_2})_2^{R}
\end{equation}
if and only if~$(e_1,e_2) = (f_1,f_2)$.
The following describes the local properties of these orders.

An interesting (complete) local property of an order in a matrix algebra is whether it is tiled.
Recall from \cite[\S 2]{MR274527} that an~$R$-order~$\Lambda$ in $\text{Mat}_n(\field((t)))$ is \emph{tiled}
if it has a complete set of $n$ primitive orthogonal idempotents.

\begin{lemma}
  \label{lemma:clifford-conics-local}
  Let $e_{1}, e_2 \ge 0$ and denote by $\Lambda\colonequals(t^{e_1},t^{e_2})_2^{R}$ the cyclic quaternion~$R$-order.
  \begin{enumerate}[label = \roman*)]
    \item The~$R$-order $\Lambda$ is maximal
      if and only if~$e_1 =e_2 = 0$.
    \item The~$R$-order $\Lambda$ is hereditary
      if and only if~$e_1 = 0$ and~$e_2 \le 1$.
    \item The~$R$-order $\Lambda$ is Bass
      if and only if~$e_1 \le 1$.
    \item The~$R$-order $\Lambda$ is Gorenstein
      for all $e_1,e_2 \in \mathbb{Z}_{\ge 0}$.
      The canonical bundle of the order is given by
      \begin{equation}
        \omega_{\Lambda} \cong R \oplus (t^{-e_1})\cdot i \oplus (t^{-e_2})\cdot j \oplus (t^{-e_1-e_2})\cdot ji.
      \end{equation}
      Moreover, the canonical bundle satisfies $\omega_{\Lambda}^2 = (t^{-e_1-e_2})\Lambda$.
    \item The~$R$-order $\Lambda$ is nodal if and only if~$(e_1,e_2)\in \{(0,0), (0,1), (0,2), (1,1)\}$.
    \item The~$R$-order $\Lambda$ is tiled if and only if~$e_1 = 0$.
    \item At the closed point we have
      \begin{equation}
        \Lambda/\rad \Lambda
        \cong
        \begin{cases}
          \Mat_2(\field) & \text{if }e_1 = e_2 = 0,\\
          \field\times \field & \text{if }0= e_1 < e_2,\\
          \field & \text{if }0< e_1 \le e_2.\\
        \end{cases}
      \end{equation}
  \end{enumerate}
\end{lemma}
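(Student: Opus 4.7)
The plan is to reduce each of the seven statements to a concrete matrix calculation, which becomes feasible once we have an explicit embedding of $\Lambda = (t^{e_1},t^{e_2})_2^R$ into $\Mat_2(K)$ with $K = \field((t))$. Such an embedding exists because $\field$ is algebraically closed, so by Tsen the quaternion $K$-algebra containing $\Lambda$ is split. When $e_1 = 0$ one has $i^2 = 1$, and the orthogonal idempotents $e_\pm\colonequals (1\pm i)/2$ satisfy $je_\pm = e_\mp j$, giving
\begin{equation}
  \Lambda \cong \begin{pmatrix} R & R \\ t^{e_2}R & R\end{pmatrix}
\end{equation}
via $e_+\mapsto E_{11}$, $e_-\mapsto E_{22}$, $e_+j\mapsto E_{12}$ and $e_-j\mapsto t^{e_2}E_{21}$. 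When $e_1 \ge 1$ I would instead solve the defining relations directly in $\Mat_2(K)$, for instance by setting
\begin{equation}
  i = \begin{pmatrix} 0 & 1 \\ t^{e_1} & 0\end{pmatrix}, \qquad j = \begin{pmatrix} \alpha & \beta \\ -\beta t^{e_1} & -\alpha\end{pmatrix},
\end{equation}
with $\alpha,\beta \in R$ satisfying $\alpha^2 - t^{e_1}\beta^2 = t^{e_2}$, which is always solvable since $\field$ is algebraically closed.

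From this setup, parts (vii), (vi), and (iv) are direct. For (vii) I would apply Nakayama: $\rad\Lambda \supset t\Lambda$ since $R$ is local and $\Lambda$ is module-finite over $R$, so $\Lambda/\rad\Lambda = (\Lambda/t\Lambda)/\rad(\Lambda/t\Lambda)$, and the quotient $\Lambda/t\Lambda$ is transparent in each of the three regimes: it is $(1,1)_2^\field \cong \Mat_2(\field)$ when $e_1 = e_2 = 0$, a four-dimensional algebra with surviving idempotents $\bar e_\pm$ and nilpotent $\bar j$ (making $\Lambda/\rad\Lambda \cong \field\times\field$) when $0 = e_1 < e_2$, and a local algebra with both $\bar i$ and $\bar j$ in the radical when $0 < e_1 \le e_2$. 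Part (vi) is then an immediate corollary: the idempotents $e_\pm$ provide the required two primitive orthogonal idempotents precisely when $e_1 = 0$, while for $e_1 \ge 1$ the local ring $\Lambda/\rad\Lambda = \field$ admits only $1$ as a primitive idempotent. For (iv) I would compute the reduced trace pairing on the basis $\{1,i,j,ji\}$: it is diagonal with entries $(2, 2t^{e_1}, 2t^{e_2}, -2t^{e_1+e_2})$, and the $R$-dual reads off as the stated formula for $\omega_\Lambda$. Setting $z\colonequals ji$, so $z^2 = -t^{e_1+e_2}$ is central, one recognizes $\omega_\Lambda = \Lambda z^{-1}$; since $z^{-1}$ is a unit of the quaternion algebra this is a principal fractional ideal, hence an invertible and so locally free rank-one $\Lambda$-bimodule, giving Gorensteinness, and $\omega_\Lambda^2 = \Lambda z^{-2} = (t^{-e_1-e_2})\Lambda$ follows from the centrality of $z^{-2}$.

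Parts (i) and (ii) reduce to the classical structure of local matrix orders: when $e_1 = 0$, the order $\left(\begin{smallmatrix} R & R \\ t^{e_2}R & R\end{smallmatrix}\right)$ is maximal iff $e_2 = 0$ and hereditary iff $e_2 \le 1$ by \cite[Theorem 39.14]{MR0393100}; when $e_1 \ge 1$ a strict overorder can be exhibited (replace $i$ by $i/t$ if $e_1 \ge 2$; for $e_1 = 1$ the matrix realization places $\Lambda$ strictly inside the hereditary order $\left(\begin{smallmatrix} R & R \\ tR & R\end{smallmatrix}\right)$), ruling out maximality and, combined with the classical structure, also hereditariness. For (iii) and (v) I would classify all overorders of $\Lambda$ in $\Mat_2(K)$ using the action on the Bruhat--Tits tree of $R$-lattices in $K^2$: the hereditary overorders are the endomorphism rings $\mathcal{O}_{L_k} = \left(\begin{smallmatrix} R & t^{-k}R \\ t^kR & R\end{smallmatrix}\right)$ of lattices stabilized by $\Lambda$, together with their nearest-neighbour intersections $\mathcal{O}_{L_k}\cap\mathcal{O}_{L_{k+1}}$. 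For (v), the $\Leftarrow$ direction is handled case by case by exhibiting, in each of the four listed pairs, a hereditary overorder $\Lambda'$ in the apartment of $\Lambda$ whose radical matches $\rad\Lambda$ (the nontrivial instance being $\Lambda = (t,t)_2^R$ sitting as an Eichler-type suborder of $\left(\begin{smallmatrix} R & R \\ tR & R\end{smallmatrix}\right)$ with identical radical) and checking the length condition on the simple $\Lambda$-modules; for $\Rightarrow$ one computes $\rad\Lambda$ and compares it against the discrete list of radicals arising on the tree, finding no match outside the four listed pairs.

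The main obstacle I foresee is (iii). Because every quaternion order $(t^a,t^b)_2^R$ is itself Gorenstein by (iv), any witness to the failure of the Bass property when $e_1 \ge 2$ must come from an overorder of $\Lambda$ that is \emph{not} of this form. Such overorders do arise in the Bruhat--Tits apartment of $\Lambda$ when $e_1 \ge 2$, obtained by adjoining a more subtle kind of radical element; the technical crux is to exhibit such an overorder explicitly and verify that its dualizing bimodule fails to be invertible. Once this obstruction is produced, the remaining verification that for $e_1 \le 1$ every overorder of $\Lambda$ is itself of the form $(t^a,t^b)_2^R$ will follow from the more restricted apartment structure in those cases, completing the proof.
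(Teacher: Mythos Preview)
Your treatment of (i), (ii), (iv), (vi), (vii) matches the paper's: Reiner's classification for (i)--(ii), the codifferent via the reduced trace for (iv), and direct inspection for (vi)--(vii). Your presentation of (iv) through $\omega_\Lambda = \Lambda z^{-1}$ with $z=ji$ is in fact slightly slicker than the paper's, which writes out the codifferent without naming the generator.

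Where you diverge is (iii) and (v). The paper does not attempt a Bruhat--Tits classification of overorders. For (iii)--(iv) it invokes the correspondence between quaternion orders and ternary quadratic forms: the form attached to $\Lambda$ is $Q(\alpha_1,\alpha_2,\alpha_3)=-t^{e_2}\alpha_1^2 - t^{e_1}\alpha_2^2 + \alpha_3^2$, which is always primitive (hence $\Lambda$ is Gorenstein by \cite[Theorem~24.2.10]{MR4279905}), and then (iii) follows by combining (vii) with \cite[Exercise~24.13(a)]{MR4279905}. For (v) the paper quotes the classification of nodal orders over a complete DVR from \cite[Theorem~3.19]{burban2019noncommutativenodalcurvesderived}: there are exactly four combinatorial types $(\Omega,\approx,\sigma)$ compatible with an order in $\Mat_2(\field((t)))$, and these are matched one-by-one to the four listed pairs $(e_1,e_2)$. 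Your route is more self-contained, but the obstacle you flag in (iii) is real: exhibiting a non-Gorenstein overorder when $e_1\ge 2$ and verifying that none exists when $e_1\le 1$ is genuine work, and your auxiliary claim that every overorder for $e_1\le 1$ is again of the form $(t^a,t^b)_2^R$ would itself need proof. The paper simply imports the Bass criterion from Voight, trading self-containment for a one-line argument.
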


\begin{remark}
  The $R$-order satisfying $\Lambda/\rad\Lambda \cong \field\times \field$ are also called \emph{residually split}, whereas the $R$-orders satisfying $\Lambda/\rad\Lambda \cong \field$ are called \emph{residually ramified}, see \cite[Definition 24.3.2]{MR4279905}. We list these local properties in \cref{table:local-properties-clifford-conic-orders}.
\end{remark}

\begin{proof}
  The first two statements follow from the classification of
  maximal (resp.~hereditary) orders over complete discrete valuation rings,
  see \cite[Theorem (17.3), Theorem (39.14)]{MR0393100}.

  For the next two statements,
  one can use the functorial one-to-one correspondence \cite[Corollary 3.20]{MR2824785}
  between nondegenerate quadratic forms and ternary quadratic modules.
  The associated bilinear form to $\Lambda$ is given by
  \begin{equation}
    Q\colon R^{\oplus 3}\to R,\quad (\alpha_1,\alpha_2,\alpha_3)\mapsto -t^{e_2}\alpha_1^2 - t^{e_1}\alpha_2^2 +\alpha_3^.
  \end{equation}
  We see that the quadratic form is always primitive,
  hence~$\Lambda$ is Gorenstein, by \cite[Theorem 24.2.10]{MR4279905}. 
Moreover, combining (vii) and \cite[Exercise 24.13(a)]{MR4279905}, statement (iii) follows.

  In (iv), the canonical bundle can be calculated
  by using the~$\Lambda\mhyphen\Lambda$-bimodule isomorphism
  \begin{equation}
    \codiff(\Lambda)\xrightarrow{\sim} \omega_\Lambda,\quad \alpha\mapsto \textsf{trd}(\alpha\cdot -),
  \end{equation}
  where the \emph{codifference} is defined as~$\codiff(\Lambda)\colonequals \{\alpha\in \Mat_2(\field((t)))\mid \trd(\alpha\Lambda)\subset R\}$.

  Statement (v) follows from the classification \cite[Theorem 3.19]{burban2019noncommutativenodalcurvesderived} of nodal orders over $R$ up to conjugation.
  Following \cite[\S 3.2]{burban2019noncommutativenodalcurvesderived} we identify four possible triples $(\Omega,\approx,\sigma)$,
  where $\approx$ is a symmetric binary relation on $\Omega$,
  and $\sigma\in S_{|\Omega|}$ is a cycle of length $|\Omega|$.
  The cycle $\sigma$ has to be of maximal length because the nodal order $\Lambda$ is an order in $\text{Mat}_2(\field((t)))$.
  The cardinality is $|\Omega|\le 2$ by rank reasons.
  Finally, we see that in the case where $|\Omega| =2$ reflexitivity is not allowed, again by rank reasons (on the hereditary overorder).
  Therefore we get the following correspondence:
  \begin{itemize}
    \item $\Omega = \{1\}$ corresponds to $(e_1,e_2) = (0,0)$,
    \item $\Omega = \{1\}$ with $1\approx 1$ corresponds to the $(e_1,e_2) = (0,2)$,
    \item $\Omega = \{1,2\}$ corresponds to $(e_1,e_2) = (0,1)$, and
    \item $\Omega = \{1,2\}$ with $1\approx 2$ corresponds to $(e_1,e_2) = (1,1)$.
  \end{itemize}

  The sixth statement follows from the fact that tiled quaternion~$R$-orders are of the form
  \begin{equation}
    \begin{pmatrix}
      R & R\\
      (a) & R
    \end{pmatrix}
  \end{equation}
  for~$a\in R$ non-zero.

  The last statement follows from a straightforward calculation of the radical.
\end{proof}

\subsection{Clifford conics as orders}
\label{subsection:clifford-conics-orders}
We will now describe Clifford conics as defined in \cref{definition:clifford-conic}
from the perspective of orders
by applying \cref{proposition:restriction-central-proj}.
We thus consider a graded Clifford algebra associated to a basepoint-free net of conics in~$\mathbb{P}^2$,
and we will let~$(\mathbb{P}^2,\order{A})$ denote the central Proj of
the graded Clifford algebra~$\Cl(M)$ over $R = \field[z_0,z_1,z_2]$,
where~$M$ defines the basepoint-free net~$\mathcal{Q}_M$ of conics.
\begin{definition}
  \label{definition:clifford-conic-order}
  Let~$(\mathbb{P}^2,\order{A})$ be the central Proj of a graded Clifford algebra.
  Let $L\cong \mathbb{P}^1$ be a line in~$\mathbb{P}^2$
  and denote by~$\iota\colon L\hookrightarrow \mathbb{P}^2$ the closed immersion.
  We call the noncommutative space~$(L,\order{B})$ a \emph{Clifford conic order},
  where~$\order{B}\cong\order{A}|_L$.
\end{definition}

We denote by $\Delta_L = \iota^{-1}(\Delta)$ the pullback of the ramification divisor of~$\order{A}$.
If~$L$ is not contained in~$\Delta$,
it follows from \cref{proposition:restriction-order}
that~$\Delta_L$ is the ramification divisor of the~$\mathcal{O}_{L}$-order~$\order{B}$.

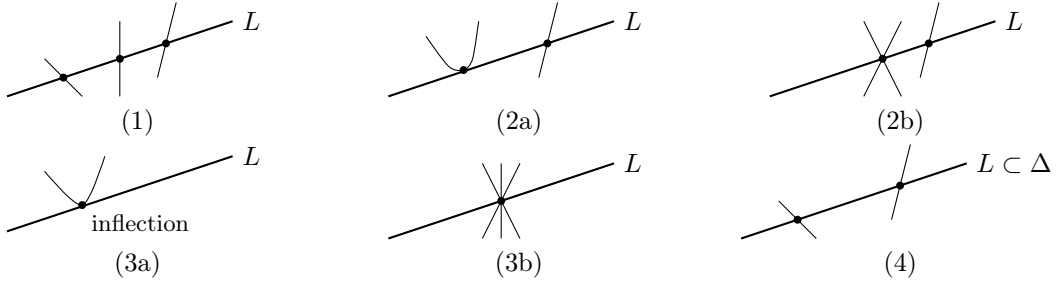
\begin{figure}[h]
  \centering
  \begin{subfigure}[b]{.3\textwidth}
    \centering
    \begin{tikzpicture}
      \draw[name path=L, thick] (0,0) -- (3,1) node [right] {$L$};
      \draw[name path=Delta1] (0.5,0.5) -- (1,0);
      \draw[name path=Delta2] (1.5,1) -- (1.5,0);
      \draw[name path=Delta3] (2.25,1.25) -- (2.,.25);

      \fill[name intersections={of=L and Delta1, by=a}] (a) circle (1.5pt);
      \fill[name intersections={of=L and Delta2, by=b}] (b) circle (1.5pt);
      \fill[name intersections={of=L and Delta3, by=c}] (c) circle (1.5pt);
    \end{tikzpicture}

    (1)
  \end{subfigure}
  \begin{subfigure}[b]{.3\textwidth}
    \centering
    \begin{tikzpicture}
      \draw[name path=L, thick] (0,0) -- (3,1) node [right] {$L$};
      \draw plot [smooth] coordinates {(0.5,0.8) (0.8,0.4) (1,0.35) (1.12,0.5) (1.2,1)};
      \draw[name path=Delta3] (2.25,1.25) -- (2.,.25);

      \fill (1,0.35) circle (1.5pt);
      \fill[name intersections={of=L and Delta3, by=c}] (c) circle (1.5pt);
    \end{tikzpicture}

    (2a)
  \end{subfigure}
  \begin{subfigure}[b]{.3\textwidth}
    \centering
    \begin{tikzpicture}
      \draw[name path=L, thick] (0,0) -- (3,1) node [right] {$L$};
      \draw[name path=Delta1] (1.25,0) -- (1.75,1);
      \draw[name path=Delta2] (1.25,1) -- (1.75,0);
      \draw[name path=Delta3] (2.25,1.25) -- (2.,.25);

      \fill[name intersections={of=L and Delta1, by=a}] (a) circle (1.5pt);
      \fill[name intersections={of=L and Delta3, by=c}] (c) circle (1.5pt);
    \end{tikzpicture}

    (2b)
  \end{subfigure}

  \begin{subfigure}[b]{.3\textwidth}
    \centering
    \begin{tikzpicture}
      \draw[name path=L, thick] (0,0) -- (3,1) node [right] {$L$};
      \draw plot [smooth] coordinates {(0.5,0.8) (1,0.35) (1.3,1)};

      \fill (1,0.35) circle (1.5pt) node [below right] {\small inflection};
    \end{tikzpicture}

    (3a)
  \end{subfigure}
  \begin{subfigure}[b]{.3\textwidth}
    \centering
    \begin{tikzpicture}
      \draw[name path=L, thick] (0,0) -- (3,1) node [right] {$L$};
      \draw[name path=Delta1] (1.25,0) -- (1.75,1);
      \draw[name path=Delta2] (1.25,1) -- (1.75,0);
      \draw[name path=Delta3] (1.5,0) -- (1.5,1);

      \fill[name intersections={of=L and Delta1, by=a}] (a) circle (1.5pt);
    \end{tikzpicture}

    (3b)
  \end{subfigure}
  \begin{subfigure}[b]{.3\textwidth}
    \centering
    \begin{tikzpicture}
      \draw[name path=L, thick] (0,0) -- (3,1) node [right] {$L\subset\Delta$};

      \draw[name path=Delta1] (0.5,0.5) -- (1,0);
      \draw[name path=Delta3] (2.25,1.25) -- (2.,.25);

      \fill[name intersections={of=L and Delta1, by=a}] (a) circle (1.5pt);
      \fill[name intersections={of=L and Delta3, by=c}] (c) circle (1.5pt);
    \end{tikzpicture}

    (4)
  \end{subfigure}

  \caption{The 6 possible intersections $L\cap\Delta$}
  \label{figure:intersection-types}
\end{figure}

Clifford conics can be classified by inspecting the geometry of the ramification divisor~$\Delta$
of the~$\mathcal{O}_{\mathbb{P}^2}$-order
and using the complete local model of terminal orders.
The first classification theorem describes Clifford conics in the language of sheaves of orders.
The~6~possible types will be referred to as~(1), (2a), (2b), (3a), (3b), and (4),
and a pictorial representation is given in \cref{figure:intersection-types}.

For a point~$p\in L$, we denote by~$\mathfrak{m}_p\unlhd \mathcal{O}_{L,p}$ the maximal ideal
and by $t$ a uniformizer of~$\mathfrak{m}_p$.

\begin{theorem}
  \label{theorem:clifford-conic-orders-classification}
  Let~$A$ be a graded Clifford algebra,
  let~$f$ define a Clifford conic.
  The Clifford conic order~$(L,\order{B})$ in~$(\mathbb{P}^2,\order{A})$
  is one of the following isomorphism types:
  \begin{enumerate}
    \item[(1)]
      If~$\# \Delta_L = 3$,
      then~$\order{B}$ is a hereditary~$\mathcal{O}_{L}$-order
      ramified over three points
      such that for every $p\in \Delta_L$
      \begin{equation}
        \label{eq:normalformquaternionhereditary}
        \order{B}\otimes_{\mathcal{O}_{L}} \widehat{\mathcal{O}}_{L,p}
        \cong
        (1,t)_2^{\widehat{\mathcal{O}}_{L,p}}.
      \end{equation}

    \item[(2)]
      If~$\#\Delta_L = 2$,
      then~$\order{B}$ is a Bass~$\mathcal{O}_L$-order
      ramified over two points.
      We distinguish the following two cases:
      \begin{enumerate}
        \item[(2a)]
          \emph{Tangent case.}
          Assume that~$L\cap \Delta^{\sing}=\emptyset$,
          then over a ramified point $p\in \Delta_L$ one has
          \begin{equation}
            \order{B}\otimes_{\mathcal{O}_{L}}\widehat{\mathcal{O}}_{L,p}
            \cong
            \begin{cases}
              (1,t^2)_2^{\widehat{\mathcal{O}}_{L,p}} & \text{if }\mult_p(\Delta,L) = 2,\\
              (1,t)_2^{\widehat{\mathcal{O}}_{L,p}} & \text{if }\mult_p(\Delta,L) = 1.
            \end{cases}
          \end{equation}
        \item[(2b)]
          \emph{Singular case.}
          Assume that~$L\cap \Delta^{\sing}$ is non-empty,
          then over a ramified point $p\in \Delta_L$ one has
          \begin{equation}
            \order{B}\otimes_{\mathcal{O}_{L}}\widehat{\mathcal{O}}_{L,p}
            \cong
            \begin{cases}
              (t,t)_2^{\widehat{\mathcal{O}}_{L,p}} & \text{if }p\in \Delta^{\sing},\\
              (1,t)_2^{\widehat{\mathcal{O}}_{L,p}} & \text{if }p\notin \Delta^{\sing}.
            \end{cases}
          \end{equation}
      \end{enumerate}

    \item[(3)]
      If~$\# \Delta_L = 1$,
      then~$\order{B}$ is a Bass~$\mathcal{O}_L$-order
      ramified over a single point~$p\in L$.
      We distinguish the following two cases:
      \begin{enumerate}
        \item[(3a)]
          \emph{Tangent case.}
          Assume that~$L\cap \Delta^{\sing}=\emptyset$,
          then~$\mathcal{O}_L$-order~$\order{B}$ restricts over $p$ to
          \begin{equation}
            \order{B}\otimes_{\mathcal{O}_{L}}\widehat{\mathcal{O}}_{L,p}
            \cong
            (1,t^3)_2^{\widehat{\mathcal{O}}_{L,p}}.
          \end{equation}
        \item[(3b)]
          \emph{Singular case.}
          Assume that~$L\cap\Delta^{\sing} = \Delta_L$,
          then~$\mathcal{O}_L$-order~$\order{B}$ restricts over $p$ to
          \begin{equation}
            \order{B}\otimes_{\mathcal{O}_{L}}\widehat{\mathcal{O}}_{L,p} \cong (t,t^2)_2^{\widehat{\mathcal{O}}_{L,p}}.
          \end{equation}
      \end{enumerate}

    \item[(4)]
      If~$L\subset \Delta$,
      then~$\order{B}$ is not an order.
      There exists an isomorphism~$\order{B}\cong \Cl^0(\mathcal{E},q,\mathcal{O}_{\bbP^1}(1))$
      with the even Clifford algebra of the vector bundle~$\mathcal{E} \cong \mathcal{O}_{L}^{\oplus 3}$
      and a quadratic form which is of rank $2$ everywhere except at two points, where it is of rank $1$.
      This corresponds to the pencil of conics with
      Segre symbol\footnote{
        See \cref{subsection:clifford-conics-comments} for more on Segre symbols.
        In the symbol~$[1, 1; \;; 1]$, the last entry encodes that the pencil is
        a cone over a pencil of length-2 subschemes, degenerating to a double point in two points of the base.
      }~$[1, 1; \;; 1]$.
  \end{enumerate}
\end{theorem}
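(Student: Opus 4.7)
The plan is to reduce the classification to an \'etale-local computation at each point $p \in \Delta_L$, and to enumerate the cases using B\'ezout's theorem for the intersection of the line $L$ with the cubic curve $\Delta$. Away from $\Delta$ the order $\order{A}$ is Azumaya, so $\order{B}$ restricts to a split Azumaya algebra on $L \setminus \Delta_L$ (it is split by \cref{proposition:restriction-order}, since the Brauer group of a curve over $\field$ is trivial). The isomorphism type of $\order{B}$ and its order-theoretic properties (hereditary, Bass) can therefore be read off locally via \cref{lemma:order-properties-local}, and the task reduces to identifying $\order{B} \otimes \widehat{\mathcal{O}}_{L,p}$ with one of the quaternion orders $(t^{e_1}, t^{e_2})_2^{\widehat{\mathcal{O}}_{L,p}}$ treated in \cref{lemma:clifford-conics-local}.

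The proof of \cref{lemma:terminal} shows that every ramification index of $\order{A}$ along a component of $\Delta$ is equal to 2. Combined with terminality and the standard form recalled before \cref{lemma:finite-dimensional-algebra-from-terminal-order}, this yields the following \'etale-local models. At a smooth point $p \in \Delta$ with local coordinates $(u,v)$ in which $\Delta = \{u = 0\}$,
\begin{equation}
\widehat{\order{A}}_p \cong \widehat{\mathcal{O}}_{\mathbb{P}^2,p}\langle i,j\rangle/(i^2 - u,\; j^2 - 1,\; ij + ji);
\end{equation}
at a node $p \in \Delta^{\sing}$ with local branches $\{u=0\}$ and $\{v = 0\}$,
\begin{equation}
\widehat{\order{A}}_p \cong \widehat{\mathcal{O}}_{\mathbb{P}^2,p}\langle i,j\rangle/(i^2 - u,\; j^2 - v,\; ij + ji).
\end{equation}
Restricting to $L$ sends $u, v$ to their images in $\widehat{\mathcal{O}}_{L,p}$: if $t$ is a uniformizer and $D$ is a local branch with $\mult_p(D, L) = m$, then the defining equation of $D$ restricts to $t^m$ up to a unit. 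B\'ezout gives total intersection multiplicity $L \cdot \Delta = 3$, and the possibilities split into six types depicted in \cref{figure:intersection-types}: three transverse smooth intersections (case 1); two intersections with either a tangency of multiplicity $2$ at a smooth point of $\Delta$ (case 2a) or a passage through a node (case 2b); a single intersection that is either an inflectional tangent of order $3$ at a smooth point (case 3a) or a line through a node tangent to one branch (case 3b); or $L \subset \Delta$ (case 4). In each of the cases (1)--(3b) the exponents $(e_1, e_2)$ are read off from the intersection multiplicities with the local branches of $\Delta$ at $p$, and the claimed algebraic properties follow from \cref{lemma:clifford-conics-local}.

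Case (4) requires a separate argument since $\order{B}$ is no longer an order by \cref{proposition:restriction-order}. Here we use the global Clifford description: the symmetric matrix $M = M_0 z_0 + M_1 z_1 + M_2 z_2$ of linear forms on $\mathbb{P}^2$ defining $\mathcal{Q}_M$ restricts along $\iota\colon L \hookrightarrow \mathbb{P}^2$ to a symmetric matrix $M|_L$ of linear forms on $L \cong \mathbb{P}^1$, and the central Proj construction is compatible with this restriction. We thereby identify $\order{B}$ with the even Clifford algebra $\Cl^0(\mathcal{E}, q, \mathcal{O}_{\mathbb{P}^1}(1))$ of the rank-3 vector bundle $\mathcal{E} \cong \mathcal{O}_L^{\oplus 3}$ equipped with the quadratic form $q$ coming from $M|_L$. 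The inclusion $L \subset \Delta$ is equivalent to $\det(M|_L) \equiv 0$, so the pencil $\mathcal{Q}_{M|_L}$ is everywhere singular; a pointwise Smith normal form computation locates the two points where the rank drops from $2$ to $1$, and the corresponding Segre symbol $[1,1;\,;1]$ is obtained from the standard classification of rank-dropping pencils. The main obstacle is the correct bookkeeping at nodes of $\Delta$: one must check that the two generators of the local quaternion form correspond to the two branches in a way that converts tangency data cleanly into the exponents $(e_1, e_2)$, and that case (4) is genuinely distinct from a degenerate limit of case (2b). Everything else is a systematic enumeration via B\'ezout combined with \cref{lemma:clifford-conics-local}.
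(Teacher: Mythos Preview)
Your proposal is correct and follows essentially the same approach as the paper: both reduce to the \'etale-local models of the terminal order $\order{A}$ at points of $\Delta$ (using \cite[Section 2.3]{MR2180454}), restrict along $L$, and enumerate the six intersection configurations via B\'ezout. The only cosmetic difference is that the paper writes the smooth-point model in the matrix form $\left(\begin{smallmatrix} R & R \\ (u) & R \end{smallmatrix}\right)$ while you use the equivalent quaternion presentation $(u,1)_2^R$; your choice makes the identification with $(t^{e_1},t^{e_2})_2$ slightly more uniform, whereas the paper's matrix form makes the tangent-case substitution $u\mapsto t^m$ more visibly a ``tiled'' computation. For case~(4) the paper is terse (``follows readily from the description of the pencil of conics''), and your sketch via the restricted symmetric matrix $M|_L$ and its rank stratification is a reasonable elaboration of the same idea.
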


In \cref{table:local-properties-clifford-conic-orders}
we summarize the results of this theorem
and some subsequent propositions.

The point scheme of the graded Clifford algebra
(or equivalently, the discriminant of the net of conics)
prescribes which isomorphism classes are feasible
for a given algebra:
not all types are possible for any given graded Clifford algebra.
This can again be read off from \cref{table:local-properties-clifford-conic-orders}.

We can in fact classify Clifford conics for all graded Clifford algebras simultaneously.
This will be explained in \cref{subsection:clifford-conics-comments}.

\begin{proof}
  By definition of a Clifford conic and \Cref{lemma:terminal},
  the $\mathcal{O}_L$-order~$\order{B}$ is the restriction of
  a terminal~$\mathcal{O}_{\mathbb{P}^2}$-order~$\order{A}$.
  We start with the complete local description of the algebra structure of~$\order{B}$.
  To do so, denote for~$p\in \mathbb{P}^2$ by~$\field[[x,y]]$
  the completion of the local ring~$\mathcal{O}_{\mathbb{P}^2,p}$ at~$p$.

  Assume first that~$L$ does not lie in the ramification divisor~$\Delta$.
  From \cite[Section 2.3]{MR2180454}, it follows that complete-locally
  terminal orders of rank~4 can only take the following two forms
  \begin{itemize}
    \item If $p\in \Delta^{\smooth}$, then
      \begin{equation}
        \label{equation:local-model-smooth}
        \widehat{\order{A}}_p
        =
        \order{A}\otimes_{\mathcal{O}_{\mathbb{P}^2,p}}\widehat{\mathcal{O}}_{\mathbb{P}^2,p}
        \cong
        \begin{pmatrix}
          \field[[x,y]]&\field[[x,y]]\\
          (x) & \field[[x,y]]
        \end{pmatrix}.
      \end{equation}
    \item If $p\in \Delta^{\sing}$, then
      \begin{equation}
        \label{equation:local-model-singular}
        \widehat{\order{A}}_p
        =
        \order{A}\otimes_{\mathcal{O}_{\mathbb{P}^2,p}}\widehat{\mathcal{O}}_{\mathbb{P}^2,p}
        \cong
        \frac{\field[[x,y]]\langle i,j\rangle}{(i^2-x, j^2-y, ij+ji)}.
      \end{equation}
  \end{itemize}

  Assume that~$p\in L\cap \Delta$,
  where we identify~$L$ with its image~$\iota(L)$.
  If $\mathcal{I}_L\unlhd \mathcal{O}_{\mathbb{P}^2,p}$
  is the ideal sheaf of~$L$,
  then
  \begin{equation}
    \order{B}\otimes_{\mathcal{O}_L}\widehat{\mathcal{O}}_{L,p}
    \cong
    \widehat{\order{A}}_p/\widehat{\mathcal{I}}_L\widehat{\order{A}}_p.
  \end{equation}
  Thus the description of~$\order{B}\otimes_{\mathcal{O}_L}\widehat{\mathcal{O}}_{L,p}$
  is a matter of distinguishing the possible intersections of
  a line with the ramification divisor~$\Delta$,
  which is of the form (A), (B), (D) or (E) as in \cref{theorem:van-gastel}.
  There are precisely six cases how~$\Delta$ and~$L$ can meet at~$p$,
  depicted in \cref{figure:intersection-types}.

  Since $\Delta$ is a curve of degree 3,
  there are three cases when~$p \in L\cap \Delta^{\smooth}$:
  the intersection is either transversal, tangential or an inflection point.
  After completion the ramification divisor is mapped to a line $\Delta = \VV(x)\subset\Spec\field[[x,y]]$
  in the model~\eqref{equation:local-model-smooth}.
  This in turn implies that~$L$ is a curve of degree 1, 2 or 3
  going through the origin transversally, tangentially or as an inflection point.

  If the intersection is transversal, the ideal sheaf of the curve,
  which we can assume to be $(y)$ as we work complete-locally,
  and $(x)$ generate the maximal ideal at $\field[[x,y]]$,
  and therefore we obtain
  \begin{equation}
    \label{equation:clifford-conic-hereditary}
    \order{B}\otimes_{\mathcal{O}_L}\widehat{\mathcal{O}}_{L,p}
    \cong (1,t)_2^{\widehat{\mathcal{O}}_{L,p}},
  \end{equation}
  which corresponds to case (1).
  This follows also from \Cref{proposition:restriction-order}.

  If the intersection is tangential (of degree~2 or~3),
  then after completion we can assume that~$L$ is mapped to a conic (resp.~cubic)
  of the form $\VV(x-y^i)$.
  We obtain the orders
  \begin{equation}
    \label{equation:clifford-conic-tiled}
    \order{B}\otimes_{\mathcal{O}_L}\widehat{\mathcal{O}}_{L,p}
    \cong
    \begin{pmatrix}
      \field[[y]] & \field[[y]] \\
      (y^{i}) & \field[[y]]
    \end{pmatrix}
    \cong
    (1,t^i)_2^{\widehat{\mathcal{O}}_{L,p}}.
  \end{equation}
  This describes the cases (2a) and (3a).

  In the situation $p\in L\cap \Delta^{\sing}$, there are two possibilities.
  The first we consider is when~$L$ intersects each of
  the branches of the ramification divisor transversally at~$p$.
  After completion we can assume~that $L = \VV(x-y)$.
  This leads to the case (2b)
  \begin{equation}
    \label{equation:clifford-conic-nodal}
    \order{B}\otimes_{\mathcal{O}_L}\widehat{\mathcal{O}}_{L,p}
    \cong
    (t,t)_2^{\widehat{\mathcal{O}}_{L,p}}.
  \end{equation}
  The last case is when $L$ is tangent to a branch of the ramification divisor
  (and consequently passes transversally through the other branch).
  The complete local picture is given by $L = \VV(x-y^2)$ and hence
  \begin{equation}\label{equation:clifford-conic-tangential}
    \order{B}\otimes_{\mathcal{O}_L}\widehat{\mathcal{O}}_{L,p}
    \cong
    (t,t^2)_2^{\widehat{\mathcal{O}}_{L,p}},
  \end{equation}
  which yields the case (3b).
  We show that the orders are Bass in \Cref{corollary:orders-gorenstein}.

  Finally, it is possible that~$L\subset\Delta$.
  In that case~$L$ intersects the other irreducible component(s) of~$\Delta$
  in~2 points.
  The description in the statement of the theorem follows readily
  from the description of the pencil of conics.
\end{proof}

The local information about Clifford conics can be used to
upgrade the commutative result that hypersurfaces in smooth varieties
are Gorenstein to a version for Clifford conics.
\begin{corollary}
  \label{corollary:orders-gorenstein}
  With the notation from \Cref{theorem:clifford-conic-orders-classification},
  assume that~$(L,\order{B})$ is a Clifford conic order such that~$\Delta_L$ is finite.
  Then~$\order{B}$ is a Bass~$\mathcal{O}_L$-order.
  In particular~$\order{B}$, is Gorenstein.
\end{corollary}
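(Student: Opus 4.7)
The plan is to verify the Bass property locally, using \cref{lemma:order-properties-local}, which reduces the question to checking the complete local rings $\widehat{\order{B}}_p$ at every point $p \in L$. The assumption that $\Delta_L$ is finite rules out case~(4) of \cref{theorem:clifford-conic-orders-classification}, so $\order{B}$ is a genuine $\mathcal{O}_L$-order and the complete local model at each $p \in L$ is either a matrix algebra (when $p \notin \Delta_L$) or one of the cyclic quaternion orders $(t^{e_1}, t^{e_2})_2^{\widehat{\mathcal{O}}_{L,p}}$ listed in the theorem.

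At a point $p \notin \Delta_L$ the order $\widehat{\order{B}}_p$ is Azumaya over a discrete valuation ring, hence maximal, hence in particular Bass. For the finitely many points $p \in \Delta_L$, I would simply go down the list of isomorphism types produced by \cref{theorem:clifford-conic-orders-classification} and apply \cref{lemma:clifford-conics-local}\emph{(iii)}, which says that $(t^{e_1}, t^{e_2})_2^R$ is Bass if and only if $e_1 \leq 1$. Inspecting the cases gives:
\begin{itemize}
  \item case (1): $(e_1, e_2) = (0, 1)$;
  \item case (2a): $(e_1, e_2) \in \{(0,1), (0,2)\}$;
  \item case (2b): $(e_1, e_2) \in \{(0,1), (1,1)\}$;
  \item case (3a): $(e_1, e_2) = (0, 3)$;
  \item case (3b): $(e_1, e_2) = (1, 2)$.
\end{itemize}
In every instance $e_1 \leq 1$, so every complete local ring of $\order{B}$ at a ramified point is Bass. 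Combining the two cases, \cref{lemma:order-properties-local} lets us conclude that $\order{B}$ itself is Bass. Since the Bass property implies the Gorenstein property by the implication chain recorded just before \cref{lemma:order-properties-local}, the ``in particular'' part follows immediately.

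There is no real obstacle here: once \cref{theorem:clifford-conic-orders-classification} has been established, the corollary is a matter of reading off the exponents $(e_1, e_2)$ in each normal form and invoking \cref{lemma:clifford-conics-local}. The only thing to be slightly careful about is to note explicitly that the hypothesis $\Delta_L$ finite excludes case~(4) (where $\order{B}$ is not even an order and the statement would be vacuous or meaningless), so that the local classification genuinely applies at every point of $L$.
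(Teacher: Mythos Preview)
Your proposal is correct and follows essentially the same approach as the paper's proof: reduce to the complete local situation via \cref{lemma:order-properties-local}, and then invoke the local classification of \cref{theorem:clifford-conic-orders-classification} together with \cref{lemma:clifford-conics-local}(iii). The paper's argument is just the terse version of what you wrote; your explicit enumeration of the exponents $(e_1,e_2)$ in each case and the remark that the hypothesis excludes case~(4) are helpful clarifications but do not change the strategy.
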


\begin{proof}
  By \cref{lemma:order-properties-local}
  one can check (complete) locally whether an order is Bass, resp.~Gorenstein.
  Therefore the result follows from the local description of the orders
  in \Cref{theorem:clifford-conic-orders-classification} and \Cref{lemma:clifford-conics-local} (iii).
\end{proof}
Note in particular, that by \Cref{lemma:clifford-conics-local} (ii)
none of the orders in the cases (2a), (2b), (3a) and (3b) are hereditary.

We collect the properties of the orders in the first three points in \cref{table:local-properties-clifford-conic-orders}.

\begin{table}[p]
  \centering
  \begin{tabular}{c|ccccccc}
    \toprule
    \textbf{type}            & \textbf{hereditary} & \textbf{Gorenstein} & \textbf{tiled} & \textbf{nodal} & \textbf{residually} & \textbf{appearance} \\
    \midrule
    (1)                      & yes                 & yes                 & yes            & yes            & split               & A, B, D, E \\
    \midrule
    (2a)                     & no                  & yes                 &                & yes            &                     & A, B, D \\
    $\mult_p(\Delta,L) = 2$  & no                  &                     & yes            & yes            & split               & \\
    $\mult_p(\Delta,L) = 1$  & yes                 &                     & yes            & yes            & split               & \\
    \addlinespace
    (2b)                     & no                  & yes                 &                & yes            &                     & B, D, E \\
    $p\in \Delta^{\sing}$    & no                  &                     & no             & yes            & ramified            & \\
    $p\notin \Delta^{\sing}$ & yes                 &                     & yes            & yes            & split               & \\
    \midrule
    (3a)                     & no                  & yes                 & yes            & no             & split               & A, B \\
    \addlinespace
    (3b)                     & no                  & yes                 & no             & no             & ramified            & B, D \\
    \midrule
    (4)                      & no                  & \multicolumn{4}{c}{\emph{not an order}}                                     & D, E \\
    \bottomrule
  \end{tabular}
  \caption{Summary of the (local) properties of the Clifford conics at $p\in \Delta_L$}
  \label{table:local-properties-clifford-conic-orders}
\end{table}

\subsection{Clifford conics as stacks}
\label{subsection:clifford-conics-stacks}
To the noncommutative space~$(\mathbb{P}^2,\order{A})$ obtained from a graded Clifford algebra
we can apply the bottom-up construction from \cite{2206.13359} as recalled in \cref{subsection:dictionary-dim-2}
and obtain
\begin{equation}
  \stack{S}_\can\xrightarrow{c} \stack{S}_\rt\xrightarrow{r}\mathbb{P}^2.
\end{equation}
This makes it possible to define the stacky incarnation of a Clifford conic order from \cref{definition:clifford-conic-order}.
\begin{definition}
  \label{definition:stacky-clifford-conic}
  Let~$(\mathbb{P}^2,\order{A})$ be the central Proj of a graded Clifford algebra,
  with associated stacky surface~$\stack{S}_\can$.
  Let $L\cong \mathbb{P}^1$ be a line in~$\mathbb{P}^2$.
  We call the Deligne--Mumford stack~$\stack{L}\colonequals L\times_{\mathbb{P}^2}\stack{S}_\can$
  a \emph{stacky Clifford conic}.
\end{definition}
The points with non-trivial stabilizer coincide with~$\Delta_L$.

The analogous statement to \Cref{theorem:clifford-conic-orders-classification}
becomes the following,
where we continue to use the labeling described in \cref{figure:intersection-types}.

\begin{theorem}
  \label{theorem:clifford-conic-stacks}
  Let~$A$ be a graded Clifford algebra,
  let~$f$ define a Clifford conic.
  The stacky Clifford conic~$\stack{L}$
  is one of the following isomorphism types:
  \begin{enumerate}
    \item[(1)]
      If~$\# \Delta_L = 3$,
      then for each point $p\in \Delta_L$
      \begin{equation}
        \stack{L}\times_{\mathbb{P}^2}\Spec(\widehat{\mathcal{O}}_{L,p})
        \cong
        \left[\Spec\left(\frac{\field[[t,u]]}{(u^2-t)}\right)/\mu_2\right],
      \end{equation}
      where~$\mu_2$ acts by $u\mapsto -u$.
      In particular, globally, one obtains the root stack $\stack{L} \cong \sqrt[2]{\mathbb{P}^1;\Delta_L}$.

    \item[(2)]
      If~$\#\Delta_L = 2$, we distinguish the following two cases:
      \begin{enumerate}
        \item[(2a)]
          \emph{Tangent case.}
          Assume that $L\cap\Delta^{\sing} = \emptyset$.
          Then over $\Delta_L$ we have
          \begin{equation}
            \stack{L}\times_{\mathbb{P}^2}\Spec(\widehat{\mathcal{O}}_{L,p})
            \cong
            \begin{cases}
              \left[\Spec\left(\frac{\field[[t,u]]}{(u^2-t^2)}\right)/\mu_2\right]&\text{if }\mult_p(\Delta,L) = 2,\\
              \left[\Spec\left(\frac{\field[[t,u]]}{(u^2-t)}\right)/\mu_2\right]&\text{if }\mult_p(\Delta,L) = 1.
            \end{cases}
          \end{equation}
          In both cases, $\mu_2$ acts by $u\mapsto -u$.

          In particular, globally, one obtains the root stack $\stack{L} \cong \sqrt[2]{\mathbb{P}^1;2[q]+[p]}$
          at a non-reduced divisor, where $\mult_q(\Delta,L) = 2$.

        \item[(2b)]
          \emph{Singular case.}
          Assume that $L\cap \Delta^{\sing}$ is non-empty.
          Then over $\Delta_L$ we have
          \begin{equation}
            \begin{aligned}
              \left[\Spec\left(\frac{\field[[v,w]]}{(w^2-v^2)}\right)/\mu_2\right]\to \stack{L}\times_{\mathbb{P}^2}\Spec(\widehat{\mathcal{O}}_{L,p})\quad & \text{if } p\in \Delta^{\sing}, \\
              \stack{L}\times_{\mathbb{P}^2}\Spec(\widehat{\mathcal{O}}_{L,p}) \cong \left[\Spec\left(\frac{\field[[t,u]]}{(u^2-t)}\right)/\mu_2\right]\quad & \text{if }p\notin \Delta^{\sing}.
            \end{aligned}
          \end{equation}
          In the first case, $\mu_2$ acts by $v,w\mapsto -v,-w$ and the map is an \'etale covering.
          In the second case $\mu_2$ acts by $u\mapsto -u$.
      \end{enumerate}

    \item[(3)]
      If~$\#\Delta_L=1$, then one distinguishes again two situations:
      \begin{enumerate}
        \item[(3a)]
          \emph{Tangent case.}
          If~$L\cap\Delta^{\sing} =\emptyset$,
          then at the (unique) closed point $p\in \Delta_L$ we have
          \begin{equation}
            \stack{L}\times_{\mathbb{P}^2}\Spec(\widehat{\mathcal{O}}_{L,p})\cong \left[\Spec\left(\frac{\field[[t,u]]}{(u^2-t^3)}\right)/\mu_2\right],
          \end{equation}
          where~$\mu_2$ acts by~$u\mapsto -u$.
          In particular, one obtains a root stack~$\stack{L} \cong \sqrt[2]{\mathbb{P}^1;3\Delta_L}$ globally.
        \item[(3b)]
          \emph{Singular case.}
          If~$L\cap\Delta^{\sing} =\Delta_L$,
          then at the (unique) closed point~$p\in\Delta_L$
          one obtains an \'etale covering
          \begin{equation}
            \left[\Spec\left(\frac{\field[[v,w]]}{(w^4-v^2)}\right)/\mu_2\right]\to \stack{L}\times_{\mathbb{P}^2}\Spec(\widehat{\mathcal{O}}_{L,p}),
          \end{equation}
          where $\mu_2$ acts by $v,w\mapsto -v,w$.
      \end{enumerate}

    \item[(4)]
      If~$L\subset \Delta_L$,
      then the generic stabilizer is~$\mu_2$.
      If~$p\notin\Delta^{\sing}$, then
      we have
      \begin{equation}
        \stack{L}\times_{\mathbb{P}^2}\Spec(\widehat{\mathcal{O}}_{L,p}) \cong \left[\Spec\left(\frac{\field[[t,u]]}{(u^2)}\right)/\mu_2\right]
      \end{equation}
      whereas if~$p\in\Delta^{\sing}$,
      we have an \'etale double cover
      \begin{equation}
        \left[\Spec\left(\frac{\field[[v,w]]}{(v^2)}\right)/\mu_2\right]\to \stack{L}\times_{\mathbb{P}^2}\Spec(\widehat{\mathcal{O}}_{L,p}).
      \end{equation}
  \end{enumerate}
\end{theorem}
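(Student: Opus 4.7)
The plan is to work complete-locally at each closed point $p\in L$, using the local presentations of $r\colon\stack{S}_\rt\to\mathbb{P}^2$ and $c\colon\stack{S}_\can\to\stack{S}_\rt$ recalled in \cref{subsection:dictionary-dim-2} together with \cref{corollary:root-stack-local}. By \cref{proposition:r-c-isomorphism}, the composition $r\circ c$ is already an isomorphism over points $p\notin\Delta$, so it suffices to treat the points $p\in L\cap\Delta$, and the split between $p\in\Delta^{\smooth}$ and $p\in\Delta^{\sing}$ governs whether the canonical stack genuinely contributes.

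For $p\in L\cap\Delta^{\smooth}$ (subsuming cases (1), (2a), (3a), and the smooth-point subcases of (2b) and (4)), part \cref{enumerate:r-c-isomorphism-2} of \cref{proposition:r-c-isomorphism} shows that $c$ is a local isomorphism. By \cref{corollary:root-stack-local},
\begin{equation}
  \stack{S}_\can\times_{\mathbb{P}^2}\Spec\widehat{\cO}_{\mathbb{P}^2,p}
  \;\cong\;
  \left[\Spec\bigl(\field[[x,y]][u]/(u^2-x)\bigr)/\mu_2\right]
\end{equation}
after choosing coordinates so that $\Delta=\VV(x)$ locally. Since $L$ is smooth at $p$, one can further arrange the ideal of $L$ to be $(x-y^m)$, where $m=\mult_p(\Delta,L)\in\{1,2,3\}$ in cases (1), (2a), (3a) respectively, or $(x)$ when $L\subset\Delta$ (case (4) at a smooth point of $\Delta$). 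Substituting $x\mapsto y^m$, respectively $x\mapsto 0$, then yields the local forms $[\Spec(\field[[t,u]]/(u^2-t^m))/\mu_2]$ and $[\Spec(\field[[t,u]]/(u^2))/\mu_2]$ asserted in the theorem; the global identifications with root stacks in the stated (possibly non-reduced) divisors follow from the functoriality of the root stack construction.

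For $p\in L\cap\Delta^{\sing}$ (the node subcases in (2b), (3b), and (4)), both branches of $\Delta$ are ramification loci with index $2$. Using the complete local model \eqref{equation:local-model-singular} of a terminal order at a node, \cref{corollary:root-stack-local} gives $R_e=R[t]/(t^2-xy)$, and this $A_1$-singular ring admits the resolution $R_e\cong T^H$ with $T=\field[[u,v]]$, $H=\mu_2$ acting by $(u,v)\mapsto(-u,-v)$, and $x=u^2$, $y=v^2$, $t=uv$. Following \cref{subsection:dictionary-dim-2}, $c$ pulls back along the \'etale cover $\Spec R_e\to\stack{S}_\rt$ to $[\Spec T/H]\to\Spec R_e$; choosing a lift of the $\mu_n=\mu_2$-action on $R_e$ to $T$ one obtains a presentation of $\stack{S}_\can\times_{\mathbb{P}^2}\Spec\widehat{\cO}_{\mathbb{P}^2,p}$ as a $(\mu_2\times\mu_2)$-quotient of $\Spec T$. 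Pulling back the ideal of $L$ to $T$ gives $\VV(u^2-v^2)$ (transversal through the node, case (2b)), $\VV(v^2-u^4)$ (tangent to one branch, case (3b)), or $\VV(u^2)$ (the node in case (4)). A judicious choice of index-two subgroup of $\mu_2\times\mu_2$ as the Galois group of the \'etale cover then reproduces, after relabeling $u,v\leftrightarrow w,v$, the presentations and $\mu_2$-actions stated in the theorem.

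The principal obstacle is the node cases: one must carefully track how the $\mu_n$-action on $R_e$ lifts to $T$ and select an \'etale cover of $\stack{S}_\can$ whose pulled-back $L$-locus is cut out by the ideal stated in the theorem with the prescribed $\mu_2$-action. Once this bookkeeping is fixed, each case reduces to a direct computation of the ideal of $L$ in $T=\field[[u,v]]$ and an identification of the residual action.
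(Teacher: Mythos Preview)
Your proposal is correct and follows essentially the same approach as the paper: set up the two complete-local models of $\stack{S}_\can$ at smooth versus singular points of $\Delta$, pull back the ideal of $L$ in the appropriate coordinates $(x-y^m)$ or $(x-y)$, $(x-y^2)$, $(x)$, and read off the quotient description case by case.

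The one place where you do more work than the paper is at the nodes. The paper never attempts to present $\stack{S}_\can$ locally as a $(\mu_2\times\mu_2)$-quotient of $\Spec T$; it simply uses that, by the construction recalled in \cref{subsection:dictionary-dim-2}, the map $c$ pulled back along the \'etale cover $\Spec R_e\to\stack{S}_\rt$ is $[\Spec T/H]\to\Spec R_e$, so $[\Spec T/H]$ is already an \'etale cover of $\stack{S}_\can$. Restricting this cover to $L$ immediately gives the \'etale covers in cases (2b), (3b), (4) without having to lift the root-stack $\mu_2$-action to $T$ or select an index-two subgroup. Your $(\mu_2\times\mu_2)$-presentation and ``judicious choice'' are not wrong, but they are an extra layer you do not need: the theorem only asserts an \'etale cover at the nodal points, and $[\Spec T/H]|_L$ provides it directly.
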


\begin{proof}
  Since $\Cl(M)$ defines a terminal order on $\mathbb{P}^2$,
  as in the proof of \Cref{theorem:clifford-conic-orders-classification},
  there are two different possibilities for the local structure of the order
  at a point $p\in \Delta\cap L$.
  We will write~$\field[[x,y]]=\widehat{\mathcal{O}}_{\mathbb{P}^2,p}$.

  We have the following distinction for the canonical stack constructed from the terminal order:
  \begin{itemize}
    \item In the situation \eqref{equation:local-model-smooth},
      one obtains
      \begin{equation}
        \label{equation:stacky-model-smooth}
        \stack{S}_\can\times_{\mathbb{P}^2}\Spec(\widehat{\mathcal{O}}_{\mathbb{P}^2,p})
        \cong
        \stack{S}_\rt\times_{\mathbb{P}^2}\Spec(\widehat{\mathcal{O}}_{\mathbb{P}^2,p})
        \cong
        \left[\Spec\left(\frac{\field[[x,y,u]]}{(u^2-x)}\right)/\mu_2\right].
      \end{equation}
    \item In the situation \eqref{equation:local-model-singular},
      one obtains an \'etale neighborhood of the canonical stack
    \begin{equation}\label{equation:stacky-model-singular}
       \left[\Spec\left(\field[[v,w]]\right)/\mu_2\right]\to \stack{S}_\can\times_{\mathbb{P}^2}\Spec(\widehat{\mathcal{O}}_{\mathbb{P}^2,p})
    \end{equation}
    over the point $p$, where the atlas of the associated root stack
    \begin{equation}
      \Spec\left(\frac{\field[[x,y,u]]}{(u^2-xy)}\right)\to \stack{S}_\rt\times_{\mathbb{P}^2}\Spec(\widehat{\mathcal{O}}_{\mathbb{P}^2,p}) \cong \left[\Spec\left(\frac{\field[[x,y,u]]}{(u^2-xy)}\right)/\mu_2\right]
    \end{equation}
    is the invariant ring of $\field[[v,w]]$ under the~$\mu_2$-action $v\mapsto -v$ and $u\mapsto -u$.
  \end{itemize}

  The case-by-case analysis is now similar to the proof of \Cref{theorem:clifford-conic-orders-classification}.
  If $p\in \Delta^{\smooth}$, we are in the case \eqref{equation:stacky-model-smooth}
  and the line $L$ becomes after a local automorphism at~$p$ a curve of at most degree~3.
  This explains the cases (1), (2a), (3a).

  If $p\in L\cap \Delta^{\sing}$,
  one has a local situation as in \eqref{equation:stacky-model-singular}
  and $L$ maps to a curve of at most degree 2 in the local neighborhood.
  If $L$ remains a line intersecting both irreducible components~$\VV(x)$ and~$\VV(y)$ of~$\Delta$ transversally,
  it is mapped to~$\VV(w^2-v^2)$ (after a suitable choice of coordinates).
  This leads to the case (2b).

  Otherwise, we can assume that~$L = \VV(x-y^2)$,
  which maps to~$\VV(v^2-w^4)\subset \Spec\field[[v,w]]$.
  This describes the case (3b).

  If~$L$ lies in the ramification divisor,
  say~$L = V(x)$ locally,
  then generically one considers the setup \eqref{equation:stacky-model-smooth}
  and we obtain
  \begin{equation}
    \stack{L}\times_{\mathbb{P}^2}\Spec(\widehat{\mathcal{O}}_{L,p})
    \cong
    \left[\Spec\left(\frac{\field[[y,u]]}{(u^2)}\right)/\mu_2\right].
  \end{equation}
  For the two points $p\in \Delta^{\sing}\cap L$,
  note that $x\mapsto v^2$.
  Therefore, using \eqref{equation:stacky-model-singular},
  we obtain the following description over these points
  \begin{equation}
    \left[\Spec\left(\frac{\field[[v,w]]}{(v^2)}\right)/\mu_2\right]
    \to
    \stack{L}\times_{\mathbb{P}^2}\Spec(\widehat{\mathcal{O}}_{L,p}).
  \end{equation}
\end{proof}

We have also summarized these results in \cref{table:properties-stacky-clifford-conics}.

\begin{remark}
  From \cref{lemma:clifford-conics-local} (v) it follows that
  the cases (2a) and (2b) of \cref{theorem:clifford-conic-orders-classification}
  are nodal orders which are not hereditary.

  In the case (2a) the inclusion into the overorder
  is the hereditary $\mathcal{O}_L$-order ramified in one point.
  This inclusion translates to a map of root stacks $\sqrt[2]{\bbP^1;2q+p}\to \sqrt[2]{\bbP^1;p}$.
  The vanishing of the stackiness at~$q$ follows from the fact that $\rad (1,t^2)$ is,
  up to conjugation by an element of $\Mat_2(\field((t)))$,
  the radical of the maximal order $\Mat_2(R)$, see e.g.~\cite[\S 23.4.15]{MR4279905}.

  In the case (2b) the nodal order is mapped into a hereditary order ramified over the same two points.
  This inclusion corresponds to the map $\stack{L}\to \sqrt[2]{\bbP^1;p+q}$
  from the canonical stack to the root stack in \cref{theorem:clifford-conic-stacks}.
\end{remark}

\begin{table}[p]
  \centering
  \begin{tabular}{c|ccccccc}
    \toprule
    \textbf{type}            & \textbf{smooth} & \textbf{generically scheme} & \textbf{root stack} & \textbf{appearance} \\
    \midrule
    (1)                      & yes             & yes                         & yes                 & A, B, D, E \\
    \midrule
    (2a)                     & no              & yes                         & yes                 & A, B, D \\
    $\mult_p(\Delta,L) = 2$  & no              & \\
    $\mult_p(\Delta,L) = 1$  & yes             & \\
    \addlinespace
    (2b)                     & no              & yes                         & no                  & B, D, E \\
    $p\in \Delta^{\sing}$    & no              &                             & no \\
    $p\notin \Delta^{\sing}$ & yes             &                             & yes \\
    \midrule
    (3a)                     & no              & yes                         & yes                 & A, B \\
    \addlinespace
    (3b)                     & no              & yes                         & no                  & B, D \\
    \midrule
    (4)                      & no              & no                          & no                  & D, E \\
    \bottomrule
  \end{tabular}
  \caption{Summary of the properties of the stacky Clifford conics~$\stack{L}$}
  \label{table:properties-stacky-clifford-conics}
\end{table}

\subsection{Classification and comments}
\label{subsection:clifford-conics-comments}
We end our discussion of noncommutative conics,
by pointing out how our results relate to those in \cite{MR4794245,MR4531545},
and how they give more information about the derived categories of noncommutative conics.

\paragraph{The global classification of Clifford conics}
The classifications in \cref{theorem:clifford-conic-orders-classification,theorem:clifford-conic-stacks}
take place relative to a fixed graded Clifford algebra~$A$.
However, \cite[Theorem~5.11]{MR4531545}
gives a classification of noncommutative conics into~6 isomorphism types
(where isomorphism refers to equivalence of the qgr),
taking all possible ambient algebras~$A$ into account.

The following proposition explains how to upgrade the algebra-per-algebra classification
from \cref{theorem:clifford-conic-orders-classification,theorem:clifford-conic-stacks}
to a global classification.
Observe that,
if~$f$ is a Clifford conic inside the graded Clifford algebra~$\Cl(M)$,
where~$M$ is a net of conics,
the restriction to~$L\subset\mathbb{P}^2$ gives a pencil of conics.
\begin{theorem}
  \label{theorem:classification}
  Let~$A,A'$ be two graded Clifford algebras,
  and let~$f,f'$ define two Clifford conics.
  The following are equivalent:
  \begin{enumerate}
    \item\label{enumerate:classification-qgr}
      $\qgr A/(f)$ and~$\qgr A'/(f')$ are isomorphic as noncommutative projective varieties.
    \item\label{enumerate:classification-segre}
      the Segre symbols of the pencils of conics attached to~$f$ and~$f'$ are equal.
    \item\label{enumerate:classification-type}
      they have the same type from the set~$\{\normalfont(1),(2a),(2b),(3a),(3b),4\}$,
  \end{enumerate}
\end{theorem}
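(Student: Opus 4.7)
The plan is to prove the cyclic chain $(1)\Rightarrow(3)\Rightarrow(2)\Rightarrow(1)$, using the detailed classifications already obtained in \cref{theorem:clifford-conic-orders-classification,theorem:clifford-conic-stacks} as the main input. The overall idea is that the type in the enumeration $\{(1),(2a),(2b),(3a),(3b),(4)\}$ is a combinatorial shadow of the intersection behaviour of the central line $L=\VV(f)$ with the cubic ramification divisor $\Delta\subset\mathbb{P}^2$, and this intersection data can equivalently be encoded by the Segre symbol of the induced pencil of conics, or by categorical invariants of $\qgr A/(f)$.

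For the equivalence $(3)\Leftrightarrow(2)$ I would carry out a direct case analysis. Restricting the defining net of conics $\mathcal{Q}_M\subset\HH^0(\mathbb{P}^2,\mathcal{O}_{\mathbb{P}^2}(2))$ along the inclusion $L\hookrightarrow\mathbb{P}^2$ produces a pencil of conics on $L$ whose singular members are indexed, with multiplicity, by the points of $\Delta_L$, refined by whether each such point lies in $\Delta^{\sing}$ and by the intersection multiplicity of $L$ with $\Delta$ there. Comparing these combinatorics with the classical tabulation of Segre symbols of pencils of conics and with the pictures in \cref{figure:intersection-types} shows that the type and the Segre symbol mutually determine each other, case by case.

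For $(3)\Rightarrow(1)$ I would invoke \cref{proposition:restriction-central-proj} to identify $\qgr A/(f) \simeq \coh(L,\order{B})$, and then use \cref{theorem:clifford-conic-orders-classification} to observe that the isomorphism class of $(L,\order{B})$ depends only on the type. Indeed, once the \'etale-local models at each $p\in\Delta_L$ are fixed, the complement $L\setminus\Delta_L$ carries a split Azumaya algebra (by Tsen's theorem applied to $\mathbb{P}^1\setminus\Delta_L$, since $\field$ is algebraically closed), and both pieces glue uniquely up to isomorphism to a global $\mathcal{O}_L$-algebra on $L\cong\mathbb{P}^1$.

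The hard part is the converse $(1)\Rightarrow(3)$, where the type must be recovered purely from the abelian category $\qgr A/(f)$. Passing through $\coh(L,\order{B})$, the strategy is to extract the following categorical invariants: (a) the support $\Delta_L$ of the non-projective simples of $\order{B}$; (b) the global dimension of the category, singling out case $(1)$ by \cref{proposition:restriction-hereditary}; (c) at each $p\in\Delta_L$, the number of simples in the formal neighbourhood at $p$, which by \cref{lemma:clifford-conics-local} detects the residually split versus residually ramified distinction and thereby separates the subcases of $(2)$ and $(3)$; and (d) whether $\order{B}$ is an order at all, i.e.~whether we are in case $(4)$, detected by the generic stabilizer of $\stack{L}$ being nontrivial or equivalently by $\order{B}$ having a two-dimensional, non-reduced generic centre. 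Combining these invariants with the list in \cref{table:local-properties-clifford-conic-orders} suffices to identify the type uniquely, which closes the cycle. The main technical point here is to verify that each of these invariants is genuinely intrinsic to the category $\qgr A/(f)$ rather than depending on the ambient embedding $L\subset\mathbb{P}^2$, but this follows because all of them are visible on the complete-local modules at the finitely many non-Azumaya points, to which the equivalence of \cref{proposition:restriction-central-proj} gives categorical access.
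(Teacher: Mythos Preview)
Your argument for the equivalence of \cref{enumerate:classification-segre} and \cref{enumerate:classification-type} is essentially the same as the paper's: both reduce to matching the combinatorics of the intersection~$L\cap\Delta$ with the standard list of Segre symbols for basepoint-free pencils of conics.

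Where you diverge is in the handling of \cref{enumerate:classification-qgr}. The paper does not attempt an internal proof: it simply cites \cite[Theorem~3.28 and Remark~3.30]{MR4794245} for the equivalence of \cref{enumerate:classification-qgr} and \cref{enumerate:classification-segre}, and in fact explicitly flags (see \cref{remark:stacky-classification}) that a self-contained geometric argument along the lines you propose is currently out of reach for the singular types~(2b), (3b), and~(4).

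The concrete gap in your proposal is the step $(3)\Rightarrow(1)$. You assert that once the complete-local models~$\widehat{\order{B}}_p$ at each~$p\in\Delta_L$ are fixed and the Azumaya locus is split, ``both pieces glue uniquely up to isomorphism to a global~$\mathcal{O}_L$-algebra on~$L\cong\mathbb{P}^1$.'' This local-to-global uniqueness is not something you have established, and it is not a standard fact for non-hereditary orders. For hereditary orders it follows from \cref{theorem:chan-ingalls}, and for the types~(1), (2a), (3a) one can indeed argue via the root-stack description in \cref{theorem:clifford-conic-stacks} together with the $3$-transitivity of~$\mathrm{PGL}_2$ on~$\mathbb{P}^1$. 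But for types~(2b), (3b), and~(4) there is no bottom-up description of the singular stacky curve~$\stack{L}$, and correspondingly no available classification of orders on~$\mathbb{P}^1$ with those complete-local types; a priori there could be moduli in the gluing. This is precisely the obstruction the paper identifies in \cref{remark:stacky-classification}, and it is why the paper falls back on the external classification from \cite{MR4794245} rather than completing the argument internally.

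Your sketch of $(1)\Rightarrow(3)$ via categorical invariants is plausible and interesting, though it would also require more care (in particular, recovering~$L$ and the locus~$\Delta_L$ intrinsically from the abelian category in case~(4), where~$\order{B}$ is not an order and its center is not~$\mathcal{O}_L$). But since the cycle already breaks at $(3)\Rightarrow(1)$, this direction alone does not close the argument.
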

Segre symbols are a means of classifying pencils of quadric hypersurfaces,
and thus in particular conics.
We refer to \cite[Chapter~XIII]{MR1288306} for their definition,
or \cite{segre} for a survey by the second author.

\begin{proof}
  The equivalence of \cref{enumerate:classification-qgr} and \cref{enumerate:classification-segre}
  follows from \cite[Theorem~3.28 and Remark~3.30]{MR4794245}.
  Note that it must be pointed out that not all possible Segre symbols arise:
  by assumption the net of conics (and thus the pencil of conics) is basepoint-free,
  which excludes two types.

  The equivalence  of \cref{enumerate:classification-segre} and \cref{enumerate:classification-type}
  follows from the fact that the Segre symbol determines the set of singular fibers
  of the pencil of conics,
  and vice versa.
  And the set of singular fibers describes how often~$L$ intersects~$\Delta$
  (transversely or tangentially) at a smooth point,
  thus giving two lines as the fiber,
  and how often~$L$ intersects~$\Delta$ at a singular point,
  thus giving a double line as the fiber.
\end{proof}

\begin{remark}
  \label{remark:stacky-classification}
  The proof of \cref{theorem:classification} is not independent of the classification results in \cite{MR4531545,MR4794245}.
  However, the stacky description of the noncommutative conics from \cref{theorem:clifford-conic-stacks}
  makes it clear that,
  by the 3-transitivity of the~$\mathrm{PGL}_2$-action on~$\mathbb{P}^1$,
  the root stacks of type~(1), (2a), resp.~(3a)
  are all isomorphic to one another within a given type.
  For the types (2b), (3b), and (4),
  the absence of a bottom-up description for singular stacks
  prevents us from giving a rigorous proof of \cref{theorem:classification}
  using the stacky interpretation.
\end{remark}

In \cref{table:comparison-table} we compare our point-of-view with the classification in \cite{MR4531545,MR4794245}.
The point scheme~$E_B$ refers to \cite[Table 4]{MR4531545},
where it is denoted~$E_A$,
and it is determined in Example~4.6 and Theorem~4.13 of op.~cit.

\begin{table}
  \centering
  \begin{tabular}{cccc}
    \toprule
    type     & $\mathbf{\# E_B}$ & $\mathrm{C}(B)$                 & Segre symbol \\
    \midrule
    (1)      & $6$               & $\field^4$                      & $[1,1,1]$ \\
    \midrule
    (2a)     & $4$               & $\field[t]/(t^2)\times\field^2$ & $[2,1]$ \\
    (2b)     & $3$               & $(\field[t]/(t^2))^2$           & $[(1,1),1]$ \\
    \midrule
    (3a)     & $2$               & $\field[t]/(t^3)\times\field$   & $[3]$ \\
    (3b)     & $1$               & $\field[t]/(t^4)$               & $[(2, 1)]$ \\
    \midrule
    (4)      & $\infty$          & $\field[s,t]/(s^2,t^2)$         & $[1,1;;1]$ \\
    \bottomrule
  \end{tabular}
  \caption{Dictionary between our types and the classification in \cite[Table 4]{MR4531545} and \cite[Table~3]{MR4794245}}
  \label{table:comparison-table}
\end{table}

\paragraph{Derived categories of noncommutative conics}
As a part of their classification \cite[Theorem 5.10]{MR4531545},
Hu--Matsuno--Mori obtain a description of the derived category of a noncommutative conic
in terms of Orlov's semiorthogonal decomposition \cite[Theorem 16]{MR2641200}
\begin{equation}
  \label{equation:orlov-qgr}
  \derived^\bounded(\qgr B) = \langle \order{B}, \derived^\bounded(\mathrm{C}(B))\rangle,
\end{equation}
using the result by Smith--Van den Bergh \cite[Theorem 5.2]{MR3108697}
that the singularity category is actually given by
the bounded derived category of modules over the finite-dimensional~$\field$-algebra~$\mathrm{C}(B)$.
Here~$B\colonequals A/(f)$ is the homogeneous coordinate ring of a noncommutative conic as before.
The algebras~$\mathrm{C}(B)$ for the six cases of noncommutative conics can be found in \cite[Table 4]{MR4531545},
as recalled in \cref{table:comparison-table}.

However, from our perspective
we have another natural semiorthogonal decomposition,
which (assuming the vanishing of the Brauer class)
in 3 out of 5 cases gives a more explicit description of the derived category
because the gluing data can be explicitly read off.

Namely,
let~$\pi\colon\stack{L}\to L$ be the coarse moduli space for the stacky curve attached to the noncommutative conic.
Then~$\mathbf{R}\pi_*\mathcal{O}_{\stack{L}}\cong\mathcal{O}_L$,
so that by the projection formula
we can obtain the semiorthogonal decomposition
\begin{equation}
  \label{equation:root-stack-sod}
  \derived^\bounded(\coh\stack{L})
  =
  \langle
    \pi^*\derived^\bounded(\mathbb{P}^1),
    \mathcal{T}
  \rangle,
\end{equation}
where~$\mathcal{T}$ is defined as the semiorthogonal complement.
Because~$\stack{L}$ is a root stack for types~(1), (2a) and~(3a),
the semiorthogonal decomposition for root stacks from \cite[Theorem 4.7]{MR3573964}
gives us
\begin{equation}
  \mathcal{T}
  \simeq
  \begin{cases}
    \derived^\bounded(k^3) & \text{\normalfont type (1)} \\
    \derived^\bounded(k\times k[t]/(t^2)) & \text{\normalfont type (2a)} \\
    \derived^\bounded(k[t]/(t^3)) & \text{\normalfont type (3a)}
  \end{cases}
\end{equation}
The sheaf of algebras~$\order{A}|_L$ corresponds to an Azumaya algebra on the stack~$\stack{L}$.
For type~(1) we have that it is split, as in \cref{corollary:no-azumaya}.
We expect that some version of the vanishing of the Brauer group of a curve as in \cite[Proposition~7.3.2]{MR4304038}
holds for the stacky curves appearing in \cref{theorem:restriction},
at least when there is no generic stabilizer.
This would give rise to an equivalence
\begin{equation}
  \derived^\bounded(\qgr B)
  \simeq
  \derived^\bounded(\coh\stack{L}),
\end{equation}
and thus \eqref{equation:root-stack-sod}
really is an alternative to \eqref{equation:orlov-qgr}.

\begin{remark}
  \label{remark:comparison-sods}
  Even in the nicest possible case of type (1),
  the semiorthogonal decompositions \eqref{equation:orlov-qgr}
  and \eqref{equation:root-stack-sod} are different.
  The former gives the path algebra of the extended Dynkin quiver~$\widetilde{\mathrm{D}}_4$,
  \begin{equation}
    \widetilde{\mathrm{D}}_4:\hspace{1ex}
    \begin{tikzpicture}[baseline={([yshift=-2pt]current bounding box.center)},vertex/.style={draw, circle, inner sep=0pt, outer sep=1pt, text width=2mm}]
      \node[vertex] (a) at (0:.8cm)   {};
      \node[vertex] (b) at (90:.8cm)  {};
      \node[vertex] (c) at (180:.8cm) {};
      \node[vertex] (d) at (270:.8cm) {};
      \node[vertex] (o) at (0,0) {};
      \draw[-{Classical TikZ Rightarrow[]}] (o) -- (a);
      \draw[-{Classical TikZ Rightarrow[]}] (o) -- (b);
      \draw[-{Classical TikZ Rightarrow[]}] (o) -- (c);
      \draw[-{Classical TikZ Rightarrow[]}] (o) -- (d);
    \end{tikzpicture}
  \end{equation}
  whereas the latter is the quotient of the path algebra for the squid quiver
  \begin{equation}
    \begin{tikzpicture}[baseline,vertex/.style={draw, circle, inner sep=0pt, outer sep=1pt, text width=2mm}]
      \node[vertex] (a) at (1,0) {};
      \node[vertex] (b) at (2,0) {};
      \node[vertex] (c) at (3,.75) {};
      \node[vertex] (d) at (3,0) {};
      \node[vertex] (e) at (3,-0.75) {};
      \draw[-{Classical TikZ Rightarrow[]}] (a) to [bend left] node [above] {$\scriptstyle x$} (b);
      \draw[-{Classical TikZ Rightarrow[]}] (a) to [bend right] node [below] {$\scriptstyle y$} (b);
      \draw[-{Classical TikZ Rightarrow[]}] (b) -- node [above] {$\scriptstyle \alpha$} (c);
      \draw[-{Classical TikZ Rightarrow[]}] (b) -- node [above, pos=.7,yshift=-2pt] {$\scriptstyle \beta$} (d);
      \draw[-{Classical TikZ Rightarrow[]}] (b) -- node [below] {$\scriptstyle \gamma$} (e);
    \end{tikzpicture}
  \end{equation}
  by the ideal of relations~$(\alpha x,\beta y,\gamma x-\gamma y)$.
  It is a classical fact that these are derived equivalent.
\end{remark}

\section{Noncommutative skew cubics}
\label{section:noncommutative-cubics}

Going beyond the case of noncommutative conics, we consider noncommutative plane cubics.
The noncommutative projective planes we will consider
are given by skew polynomial algebras~$A=\field_q[x,y,z]$ with coefficient matrix
\begin{equation}
  \label{equation:coefficient-matrix}
  q=\begin{pmatrix}
    1 & q_{1,2} & q_{1,3} \\
    q_{1,2}^{-1} & 1 & q_{2,3} \\
    q_{1,3}^{-1} & q_{2,3}^{-1} & 1
  \end{pmatrix}
\end{equation}
where~$q_{i,j} \in \field^\times$ are cube roots of unity.
This guarantees the presence of a natural net of central cubics.
The algebra~$A$ is always finite over its center,
which is described explicitly in \cite{MR1624000},
and the central Proj is always given by a maximal order~$\order{A}$ on~$\mathbb{P}^2$.
See \cref{subsection:noncommutative-via-central-curves} for more details.

There are two distinct cases for~$(\mathbb{P}^2,\order{A})$ which depend on whether the condition
\begin{equation}
  \label{equation:kanazawa-independence-restated}
  q_{1,2}^{-1}q_{1,3}^{-1}=q_{1,2}q_{2,3}^{-1}=q_{1,3}q_{2,3}
\end{equation}
is satisfied or not.
As in the conic setting of \Cref{section:noncommutative-conics},
we can therefore study noncommutative curves by considering quotients by an element in the net
\begin{equation}
  f \in \ZZ(A)_3 = \langle x^3, y^3, z^3\rangle.
\end{equation}
In \cite{MR3313507},
the quotient by the central element~$f= x^3+y^3+z^3\in\ZZ(A)_3$
is considered\footnote{Op.~cit.~also considers higher-dimensional versions of this setup.}
to describe a noncommutative ``Fermat'' cubic~$\qgr A/(f)$.
It is shown in op.~cit.~that this quotient is Calabi--Yau
if and only if
the condition \eqref{equation:kanazawa-independence-restated} holds.
We will generalise these results to arbitrary noncommutative cubics,
and independent of whether condition \eqref{equation:kanazawa-independence-restated} holds.

\begin{proposition}
  \label{proposition:kanazawa-fermat-cubic-1}
  Let~$A=\field_q[x,y,z]$ be a skew polynomial algebra
  where the~$q_{i,j}$ are cube roots of unity satisfying \eqref{equation:kanazawa-independence-restated}.
  For every~$f = a x^3+ b y^3+ cz^3$ defined by some $[a:b:c]\in \mathbb{P}^2$,
  and writing~$B\colonequals A/(f)$,
  we have
  \begin{equation}
    \qgr B\simeq\coh E
  \end{equation}
  where~$E\subset \mathbb{P}_{[x:y:z]}^2$ is the cubic curve with equation $f$.
\end{proposition}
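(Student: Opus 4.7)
The plan is to transform $\qgr B$ into a geometric object step by step using the central Proj construction and the restriction equivalence of \cref{theorem:restriction}, and then to identify the result with $\coh E$ via the Veronese cover $\pi\colon\bbP^2\to\bbP^2$, $[x:y:z]\mapsto[x^3:y^3:z^3]$. First I apply the central Proj construction: by Mori's description of the center in \cite{MR1624000} combined with \cref{lemma:veronese-construction}, the center $\ZZ(A)$ is under Kanazawa's condition generated in degree $3$ by $x^3,y^3,z^3$, so the central Proj is a pair $(\bbP^2,\order{A})$ where the new $\bbP^2$ has coordinates $[u:v:w]$ corresponding via the Veronese to $[x^3:y^3:z^3]$. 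By \cref{theorem:van-gastel} and the skew-case analysis of \cite{MR1624000}, $\order{A}$ is a maximal order of PI-degree~$3$ ramified with index~$3$ along the three coordinate lines $\Delta=\VV(uvw)$. The element $f$ becomes the linear form $au+bv+cw$, cutting out a line $L\cong\bbP^1$, and \cref{proposition:restriction-central-proj} yields $\qgr B\simeq\coh(L,\order{A}|_L)$.

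Next I apply the restriction dictionary of \cref{theorem:restriction}. For generic $[a:b:c]$, the line $L$ meets $\Delta$ transversely in three smooth points of $\Delta$, so by \cref{proposition:restriction-hereditary} the sheaf $\order{A}|_L$ is a hereditary $\cO_L$-order and one obtains an equivalence with $\coh(\stack{L},\order{A}_\rt|_{\stack{L}})$, where $\stack{L}\cong\sqrt[3]{L;\,L\cap\Delta}$ is a smooth root stack of index~$3$ at three points carrying an Azumaya algebra of rank~$9$. Degenerate positions of $[a:b:c]$ will be handled by the parallel analysis of the appropriate (possibly non-reduced or singular) stacky curve arising from the same dictionary.

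The geometric heart of the argument is the Veronese map $\pi$, which is Galois of degree~$9$ with group $G=\mu_3^3/\mu_3\cong\mu_3^2$ (quotient by the diagonal), totally ramified with index~$3$ along $\VV(xyz)$ over $\Delta$. The preimage $\pi^{-1}(L)=\VV(ax^3+by^3+cz^3)$ is exactly $E$, and $\pi|_E\colon E\to L$ is a $G$-Galois cover with ramification index~$3$ at the three preimages of each point in $L\cap\Delta$, so the stabilizers match the automorphism groups of $\stack{L}$. This identifies $\stack{L}$ canonically with the quotient stack $[E/G]$, so that $\coh(\stack{L},\order{A}_\rt|_{\stack{L}})\simeq\coh^G(E,\cB)$ for a $G$-equivariant Azumaya algebra $\cB$ of rank~$9$ on $E$.

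Finally, the Brauer class of $\cB$ restricted to a geometric point gives a class in $\HH^2(G,\Gm)\cong\mu_3$. The role of Kanazawa's condition \eqref{equation:kanazawa-independence-restated} is that it forces this class to be the regular (Heisenberg-type) cocycle, whose twisted group algebra is $9$-dimensional simple and hence Morita equivalent to $\field$; the corresponding equivariant Morita equivalence gives $\coh^G(E,\cB)\simeq\coh E$, completing the chain. The main obstacle is precisely this last step: extracting from the skew-polynomial presentation of $A$ the explicit Brauer class of $\order{A}_\rt|_{\stack{L}}$, and verifying that \eqref{equation:kanazawa-independence-restated} is exactly the condition for the cocycle to be regular. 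When this condition fails, the cocycle is no longer regular and $\coh^G(E,\cB)$ is instead equivalent to the tubular weighted projective line of type $(3,3,3)$ treated in \cref{proposition:kanazawa-fermat-cubic-2}.
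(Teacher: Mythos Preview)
Your proposal rests on an incorrect description of the central Proj in the Kanazawa case. You assert that~$\ZZ(A)$ is generated in degree~$3$ by~$x^3,y^3,z^3$ and that~$\order{A}$ is a rank-$9$ maximal order on~$\mathbb{P}^2$ ramified along~$\VV(uvw)$. But when \eqref{equation:kanazawa-independence-restated} holds, the element~$xyz$ is also central (this is exactly what the condition says), so already your description of the center is off. More seriously, the condition \eqref{equation:kanazawa-independence-restated} is precisely the condition under which~$A$ is a Zhang twist of the commutative polynomial ring~$\field[X,Y,Z]$ by an automorphism~$\theta$ of order~$3$ (this is \cref{lemma:skew-is-twisted}). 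Consequently~$A^{(3)}\cong\field[X,Y,Z]^{(3)}$ is \emph{commutative}, and the central Proj is the pair~$(\mathbb{P}^2,\mathcal{O}_{\mathbb{P}^2})$ with the \emph{trivial} order. There is no non-trivial ramification, no root stack, and no Azumaya algebra to analyse: the restriction to the line~$L=\VV(aX+bY+cZ)$ already gives~$\coh L$, and tracking~$f$ through the twist identifies~$L$ with the cubic~$E$. This is the paper's proof, and it is a two-line argument once the twist is recognised.

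The picture you describe --- a rank-$9$ order ramified with index~$3$ along three lines, leading to the root stack~$\sqrt[3]{\mathbb{P}^1;0+1+\infty}$ --- is exactly what happens in the \emph{non}-Kanazawa case treated in \cref{proposition:kanazawa-fermat-cubic-2} and \cref{lemma:nonkanazawa-ramification}; you have transplanted that analysis into the wrong regime. Your instinct that the two cases should be distinguished by a Brauer-type obstruction is not unreasonable, but the actual mechanism is more basic: Kanazawa's condition trivialises the order itself, not merely its Brauer class on some cover. If you insist on passing through~$\mathbb{P}^2_{[u:v:w]}=\Proj\field[x^3,y^3,z^3]$, then~$\order{A}$ is \emph{not} an order there (its centre is strictly larger than~$\mathcal{O}$), so \cref{theorem:restriction} does not apply, and your ``main obstacle'' --- computing the cocycle and showing it is regular --- would in effect amount to rediscovering the Zhang twist by a longer route.
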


\Cref{proposition:kanazawa-fermat-cubic-1}
is an upgrade of \cite[Theorem~1.1]{MR3313507}:
by the classification of 1-Calabi--Yau categories in \cite{MR2427460}
one knows that~$\qgr B$ for~$f=x^3+y^3+z^3$
must be~$\coh E$ for \emph{some} elliptic curve~$E$.
In \cref{subsection:kanazawa-is-fermat}
we prove that it is the Fermat elliptic curve,
and moreover extend the description to \emph{all} elements in the net~$\langle x^3,y^3,z^3\rangle$ of central cubics.

Note that this net includes singular fibres: the curve~$E$ is singular when $abc = 0$.
If only one of the~$a,b,c$ vanishes,
then~$E$ are three concurrent lines,
if two of the~$a,b,c$ vanish,
then~$E$ is the non-reduced triple line.

We also consider the case where the condition of \cite{MR3313507} does not hold, which is new.
In \cref{subsection:non-kanazawa-is-tubular} we prove the following result,
giving a complete description of the elements in the net of noncommutative cubics.

\begin{proposition}
  \label{proposition:kanazawa-fermat-cubic-2}
  Let~$A=\field_q[x,y,z]$ be a skew polynomial algebra
  where the~$q_{i,j}$ are cube roots of unity such that \eqref{equation:kanazawa-independence-restated} does not hold.
  For every~$f = a x^3+ b y^3+ cz^3$ defined by some $[a:b:c]\in \mathbb{P}^2$,
  and writing~$B\colonequals A/(f)$,
  we have
  \begin{equation}
    \qgr B \simeq \coh(\stack{C},\order{B}),
  \end{equation}
  where $\stack{C}$ is a separated Deligne--Mumford stack of dimension $1$ with coarse moduli space $\mathbb{P}^1$,
  and~$\order{B}$ is an Azumaya algebra on~$\stack{C}$,
  with the following description:
  \begin{enumerate}
    \item if $abc\neq 0$,
      then $\stack{C}$ is isomorphic to
      the (smooth) root stack $\stack{C} \cong \sqrt[3]{\kern1pt\mathbb{P}^1\kern-1pt;0+1+\infty}$
      and~$\order{B}$ is split;
    \item if $abc=0$ and exactly one of $a,b,c$ is $0$,
      then~$\stack{C}$ has trivial generic stabiliser and two stacky points:
      it is the union in the Zariski topology
      of the (smooth) root stack~$\sqrt[3]{\mathbb{A}^1;0}$
      and a singular quotient stack;
    \item if exactly two of $a,b,c$ are $0$,
      then~$\stack{C}$ has generic stabiliser $\mu_3$
      and two points with non-generic behavior.
  \end{enumerate}
\end{proposition}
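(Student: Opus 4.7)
The plan is to reduce the question to the restriction setting of \cref{section:restriction} via the central Proj construction, and then to read off $\stack{C}$ from the local geometry of how the resulting line meets the ramification divisor of $\order{A}$. Since the $q_{i,j}$ are cube roots of unity, $A$ is finite over the central subalgebra $\field[x^3,y^3,z^3]$, and by \cref{lemma:veronese-construction} together with Mori's description (\cite[Theorem~4.7]{MR1624000}, compare \cref{theorem:van-gastel}), the central Proj is the pair $(\mathbb{P}^2,\order{A})$, where $u=x^3$, $v=y^3$, $w=z^3$ are Veronese coordinates on $\mathbb{P}^2=\Proj\ZZ(A^{(3)})$ and $\order{A}$ is a terminal order. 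When \eqref{equation:kanazawa-independence-restated} fails, the ramification divisor of $\order{A}$ is the type-(E) configuration $\Delta=\VV(uvw)$, with ramification index $3$ along each coordinate line. In these coordinates the central cubic $f=ax^3+by^3+cz^3$ becomes the linear form $au+bv+cw$, so the central curve $\VV(f)\subset\mathbb{P}^2$ is a line $L\cong\mathbb{P}^1$. \cref{proposition:restriction-central-proj} yields $\qgr B\simeq\coh(L,\order{A}|_L)$, and \cref{theorem:restriction} identifies this in turn with $\coh(\stack{C},\order{B})$, where $\stack{C}=L\times_{\mathbb{P}^2}\stack{S}_\can$ and $\order{B}=\order{A}_\can|_{\stack{C}}$; the latter is Azumaya as the restriction of an Azumaya algebra.

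Next, I carry out a case analysis on the intersection $L\cap\Delta$, running parallel to the Clifford-conic analysis of \cref{theorem:clifford-conic-stacks} but with ramification index $3$ in place of $2$.
\begin{enumerate}
  \item If $abc\neq 0$, then $L$ avoids the three singular points $[1{:}0{:}0]$, $[0{:}1{:}0]$, $[0{:}0{:}1]$ of $\Delta$ and meets each coordinate line transversely at a single smooth point. \cref{proposition:restriction-hereditary} (applied with ramification index $3$) identifies $\stack{C}$ with a root stack $\sqrt[3]{\mathbb{P}^1;p_1+p_2+p_3}$ at three distinct reduced points, and the sharp three-transitivity of $\mathrm{PGL}_2$ on $\mathbb{P}^1$ normalises it to $\sqrt[3]{\mathbb{P}^1;0+1+\infty}$.
  \item If exactly one of $a,b,c$ vanishes, say $a=0$, then $L=\VV(bv+cw)$ meets $\VV(u)$ transversely at a smooth point of $\Delta$ and passes through the singular point $[1{:}0{:}0]\in\Delta^{\sing}$ where $\VV(v)$ and $\VV(w)$ cross. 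The smooth intersection contributes an open chart isomorphic to $\sqrt[3]{\mathbb{A}^1;0}$, while the singular intersection produces a singular quotient stack whose local model is the index-$3$ analogue of case~(2b) of \cref{theorem:clifford-conic-stacks}.
  \item If two of $a,b,c$ vanish, then $L$ coincides with a coordinate line and $L\subset\Delta$. Arguing as in case~(4) of \cref{theorem:clifford-conic-stacks} with $\mu_3$ in place of $\mu_2$, the generic stabiliser of $\stack{C}$ is $\mu_3$, and the two distinguished points are the intersections of $L$ with the remaining coordinate lines.
\end{enumerate}

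Finally, in case (i) the stack $\stack{C}$ is a smooth Deligne--Mumford stack with trivial generic stabiliser and coarse moduli space $\mathbb{P}^1$, so its Brauer group vanishes by the Tsen-type argument of \cref{corollary:no-azumaya}, the Azumaya algebra $\order{B}$ is split, and one obtains the stated equivalence $\qgr B\simeq\coh\stack{C}$. The main obstacle is the local analysis in cases (ii) and (iii): there $\stack{C}$ is no longer a root stack and the tools of \cref{subsection:properties-restriction} do not directly apply, so one has to unpack the canonical-stack construction of \cref{theorem:faber-ingalls-okawa-satriano} by hand for a terminal order of rank $9$ with ramification index $3$. The computation is a cube-root analogue of the quaternionic one in the proof of \cref{theorem:clifford-conic-stacks}, with $\mu_3$ playing the role of $\mu_2$, and is somewhat more intricate because $\mu_3$ admits more non-trivial characters.
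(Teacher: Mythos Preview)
Your overall strategy is the same as the paper's: pass to the central Proj, identify the central cubic with a line $L\subset\mathbb{P}^2$, and analyse $L\cap\Delta$ case by case. Case~(1) is handled exactly as in the paper. The identification $\Delta=\VV(uvw)$ with ramification index~3 is the content of the paper's \cref{lemma:nonkanazawa-ramification}; your appeal to Mori's theorem is reasonable but the paper gives a self-contained computation.

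The gap is in cases~(2) and~(3), which you acknowledge. However, your proposed route---working out \'etale-local models of the rank-$9$ terminal order by analogy with the quaternionic models in the proof of \cref{theorem:clifford-conic-stacks}---is more laborious than what is actually needed, and the remark that $\mu_3$ having more characters makes things ``more intricate'' is misleading. The paper's proof never touches the local form of the order $\order{A}$ at all: once the ramification data is known, the root stack at a node of $\Delta$ is determined purely by \cref{corollary:root-stack-local} as
\[
  \stack{S}_\rt\times_{\mathbb{P}^2}\Spec\field[u,v]\;\cong\;\left[\Spec\frac{\field[u,v,t]}{(t^3-uv)}\Big/\mu_3\right].
\]
One then recognises $\field[u,v,t]/(t^3-uv)\cong\field[r,s]^{\mu_3}$ as the $A_2$ invariant ring via $u\mapsto r^3$, $v\mapsto s^3$, $t\mapsto rs$ (with $\mu_3$ acting by weights $(1,2)$ on $r,s$), so the canonical stack is covered \'etale-locally by $[\Spec\field[r,s]/\mu_3]$. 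Restricting to the line $au+bv=0$ amounts to imposing $ar^3+bs^3=0$, which immediately gives the singular quotient stack in case~(2) and, for $L\subset\Delta$ (say $u=0$), the non-reduced stack $[\Spec\field[r,s]/(r^3)/\mu_3]$ with generic $\mu_3$-stabiliser in case~(3). This is in fact \emph{simpler} than the Clifford computation, because the root-stack and canonical-stack descriptions depend only on the ramification divisor, not on any finer structure of $\order{A}$.
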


Hence, we now obtain a family of \emph{stacks} over $\mathbb{P}^1$, some of which are singular,
and isotrivial on the complement of~$abc=0$.
We give a full description of the singular fibres in the net in \cref{subsection:non-kanazawa-is-tubular}.
We expect that the Azumaya algebra~$\order{B}$ is split
also for singular fibres.

\subsection{Kanazawa's skew Fermat cubic is the Fermat cubic}
\label{subsection:kanazawa-is-fermat}
When the condition \eqref{equation:kanazawa-independence-restated} holds,
\cite[\S 4]{MR1624000} can be used to show the following easy lemma.
\begin{lemma}
  \label{lemma:skew-is-twisted}
  If~$A$ is the skew polynomial algebra with coefficient matrix as in \eqref{equation:coefficient-matrix}
  satisfying \eqref{equation:kanazawa-independence-restated}
  then~$A$ is a twisted polynomial ring,
  i.e.,
  there exists an isomorphism of algebras
  \begin{equation}
    \phi\colon A\to \field[X,Y,Z]^\theta,\quad x\mapsto X,\ y\mapsto Y,\ z\mapsto Z,
  \end{equation}
  where~$\field[X,Y,Z]^\theta = (\field[X,Y,Z],*)$ has the twisted product defined by~$f * g = f g^\theta$
  for the ring automorphism~$\theta \colon \field[X,Y,Z] \to \field[X,Y,Z]$
  with $X^\theta = X$, $Y^\theta = q_{1,2}Y$, and $Z^\theta = q_{1,3} Z$.
\end{lemma}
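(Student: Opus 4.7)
The plan is to verify the proposed isomorphism by a direct check on generators and relations, followed by a Hilbert series comparison. The approach has three natural steps.

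First, I would compute the commutation relations satisfied by the generators $X,Y,Z$ inside the twisted polynomial ring $\field[X,Y,Z]^\theta$. Since $X,Y,Z$ are $\theta$-eigenvectors with eigenvalues $1$, $q_{1,2}$, $q_{1,3}$, the formula $U*V = UV^\theta$ together with the commutativity of the underlying polynomial ring immediately produces
\begin{equation}
  X*Y = q_{1,2}\,Y*X, \qquad X*Z = q_{1,3}\,Z*X, \qquad Y*Z = q_{1,2}^{-1}q_{1,3}\,Z*Y.
\end{equation}

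Second, I would match these against the defining relations of $A = \field_q[x,y,z]$ under the prescribed assignment $x\mapsto X$, $y\mapsto Y$, $z\mapsto Z$. The first two match on the nose; the third requires $q_{2,3} = q_{1,2}^{-1} q_{1,3}$. This is precisely where the hypothesis \eqref{equation:kanazawa-independence-restated} enters: the equality of its outer terms gives $q_{1,2}^{-1}q_{1,3}^{-1} = q_{1,3}q_{2,3}$, hence $q_{2,3} = q_{1,2}^{-1}q_{1,3}^{-2}$, and the cube-root-of-unity assumption $q_{1,3}^3 = 1$ converts $q_{1,3}^{-2}$ into $q_{1,3}$, yielding the required identity. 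This is the only nontrivial point; it would be an overstatement to call it an obstacle, since the hypothesis is tailor-made to make this calculation work.

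Third, having a well-defined graded $\field$-algebra homomorphism $\phi$, I would conclude by observing that both $A$ and $\field[X,Y,Z]^\theta$ are connected graded algebras on three degree-one generators modulo quadratic commutation relations that determine a PBW-type basis; both therefore have Hilbert series $1/(1-t)^3$, so the surjection $\phi$ is forced to be an isomorphism. Equivalently, one could invoke \cite[\S 4]{MR1624000} directly, where this identification via Zhang twisting is carried out.
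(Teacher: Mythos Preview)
Your proposal is correct. The paper does not actually supply a proof of this lemma; it simply introduces it with the remark that ``\cite[\S 4]{MR1624000} can be used to show the following easy lemma'' and then moves on. Your direct verification --- computing the $*$-commutation relations from the eigenvalues of $\theta$, invoking the hypothesis \eqref{equation:kanazawa-independence-restated} together with $q_{1,3}^3=1$ to obtain $q_{2,3}=q_{1,2}^{-1}q_{1,3}$, and closing with a Hilbert series comparison --- is exactly the elementary argument the citation is pointing to, just spelled out in full.
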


With this lemma in hand,
the proof of \cref{proposition:kanazawa-fermat-cubic-1} is easy.
\begin{proof}[Proof of \cref{proposition:kanazawa-fermat-cubic-1}]
  The twisted polynomial ring has center generated by elements of degree $3$,
  thus by \Cref{lemma:veronese-construction}
  the central Proj is constructed from the 3-Veronese $A^{(3)}$.
  It follows by \Cref{lemma:skew-is-twisted} that the 3-Veronese is given by
  \begin{equation}
    A^{(3)} \cong (\field[X,Y,Z]^\theta)^{(3)} = \field[X,Y,Z]^{(3)},
  \end{equation}
  where the final equality is \cite[Corollary 4.5]{MR1624000}.
  In particular the central Proj is simply the commutative plane
  \begin{equation}
    (\Proj \ZZ(A^{(3)}), \order{A}^{(3)}) \cong (\Proj \field[X,Y,Z]^{(3)},\mathcal{O}_{\Proj \field[X,Y,Z]^{(3)}}) \cong (\mathbb{P}^2,\mathcal{O}_{\mathbb{P}^2}).
  \end{equation}
  Under the algebra isomorphism from \Cref{lemma:skew-is-twisted} the central element~$f=ax^3+by^3+cz^3$ is mapped to
  \begin{equation}
    \phi(f) = aX*X*X + bY*Y*Y + cZ*Z*Z = aX^3 + bq_{1,2}^2 Y^3 + cq_{1,3}^2 Z^3.
  \end{equation}
  Multiplying $Y$ and $Z$ by an appropriate~$9$th root of unity we recover the equation $aX^3+bY^3+cZ^3$,
  which defines a cubic curve~$E\subset \mathbb{P}^2$.
  We conclude that
  \begin{equation}
    \qgr A/(f)
    \simeq \qgr \left(A^{(3)}/(f)\right)
    \simeq \qgr\left( \field[X,Y,Z]/(aX^3+bY^3+cZ^3) \right)
    \simeq \coh(E),
  \end{equation}
  as claimed.
\end{proof}

\begin{remark}
  \label{remark:kanazawa-caveat-CY-curve}
  \Cref{proposition:kanazawa-fermat-cubic-1}
  explains the need for a caveat to the statement of \cite[Proposition~3.7]{MR3313507},
  in that we have to exclude skew Fermat cubic curves from loc.~cit.,
  as we do not obtain a genuinely noncommutative Calabi--Yau curve.
  In fact, no such thing exists,
  by the classification in \cite{MR2427460}.
\end{remark}

\subsection{The other skew cubics as (degenerations of) tubular weighted projective lines}
\label{subsection:non-kanazawa-is-tubular}
Now suppose that $A = \field_q[x,y,z]$ is a skew polynomial algebra
for which the condition \eqref{equation:kanazawa-independence-restated} does not hold.
The central Proj of $A$ can be described using \cite[\S 3]{MR1624000} and is as follows.

\begin{lemma}
  \label{lem:non-kanazwa}
  If~$A = \field_q[x,y,z]$ with coefficient matrix as in \eqref{equation:coefficient-matrix}
  which does not satisfy \eqref{equation:kanazawa-independence-restated},
  then $\ZZ(A)$ has generators of coprime degree and the central Proj is given by
  \begin{equation}
    (\mathbb{P}^2,\order{A}) =
    (\Proj \ZZ(A)^{(3)}, \order{A}^{(3)}) = (\Proj \field[x^3,y^3,z^3]^{(3)}, \order{A}^{(3)}).
  \end{equation}
\end{lemma}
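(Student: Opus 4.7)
The plan is to compute $\ZZ(A)$ on the monomial basis, invoke \Cref{theorem:van-gastel,lemma:veronese-construction}, and translate the failure of \eqref{equation:kanazawa-independence-restated} into the coprime-degree condition. Fix a primitive cube root of unity $\zeta$ and write $q_{12} = \zeta^\alpha$, $q_{13} = \zeta^\beta$, $q_{23} = \zeta^\gamma$ with $\alpha, \beta, \gamma \in \mathbb{F}_3$. Computing the commutators $[x^a y^b z^c, x]$, $[x^a y^b z^c, y]$, $[x^a y^b z^c, z]$ directly (cf.~\cite[\S 3]{MR1624000}), one finds that $x^a y^b z^c$ is central if and only if the triple $(a, b, c) \in \mathbb{F}_3^3$ lies in the kernel of
\begin{equation*}
  M = \begin{pmatrix} 0 & \alpha & \beta \\ \alpha & 0 & -\gamma \\ \beta & \gamma & 0 \end{pmatrix}.
\end{equation*}
In particular, $(0, 0, 0) \in \ker M$ yields $x^3, y^3, z^3 \in \ZZ(A)$, and a direct check shows that $(\gamma, -\beta, \alpha)$ always lies in $\ker M$; since $\det M \equiv 0 \pmod 3$, in the noncommutative case this vector generates the kernel.

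The second step is to translate \eqref{equation:kanazawa-independence-restated}. Both equalities in this condition reduce modulo~$3$ to the single relation $\alpha - \beta + \gamma \equiv 0$, which is precisely the condition that the kernel generator $(\gamma, -\beta, \alpha)$ has vanishing coordinate sum. Hence the failure of \eqref{equation:kanazawa-independence-restated} produces a representative $(a_0, b_0, c_0) \in \{0, 1, 2\}^3$ of this generator with $a_0 + b_0 + c_0 \not\equiv 0 \pmod 3$. The monomial $x^{a_0} y^{b_0} z^{c_0}$ is then a central element of degree coprime to~$3$, and together with $x^3, y^3, z^3$ it makes $\ZZ(A)$ generated by non-zerodivisors in degrees with greatest common divisor $1$.

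For the conclusion, \Cref{lemma:veronese-construction} applied with $d = \gcd = 1$ gives $(\relSpec_Y \mathcal{Z}, \order{\underline A}) \cong (Y, \order{A})$ for $Y = \Proj \ZZ(A)$; combining with \Cref{theorem:van-gastel} identifies this with $(\mathbb{P}^2, \order{A})$. Since $\Proj \ZZ(A) = \Proj \ZZ(A)^{(3)}$ this yields the first equality of the lemma. For the second equality, note that the multiples $k(\gamma, -\beta, \alpha) \in \ker M$ for $k = 1, 2$ have coordinate sums $k(\alpha - \beta + \gamma) \not\equiv 0 \pmod 3$, so a central monomial of degree divisible by $3$ must have $(a, b, c) \equiv (0, 0, 0) \pmod 3$. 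Hence $\ZZ(A)^{(3)} = \field[x^3, y^3, z^3] = \field[x^3, y^3, z^3]^{(3)}$, and the second equality follows.

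The main obstacle is the bookkeeping in the first two paragraphs: one must verify that both equalities in \eqref{equation:kanazawa-independence-restated} collapse to $\alpha - \beta + \gamma \equiv 0 \pmod 3$ and that this $\mathbb{F}_3$-linear condition coincides with the vanishing of the coordinate sum of the kernel generator of $M$. Once this linear-algebra translation is in place, the remaining assertions follow formally from \Cref{theorem:van-gastel,lemma:veronese-construction}.
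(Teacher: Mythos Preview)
Your proof is correct and follows essentially the same approach as the paper: parametrize the $q_{i,j}$ by exponents of a primitive cube root of unity, translate \eqref{equation:kanazawa-independence-restated} into a linear congruence mod~$3$, deduce that $\ZZ(A)$ has generators of coprime degree, and apply \cref{lemma:veronese-construction}. The only difference is that the paper outsources step~3 to \cite[Corollary~3.4]{MR1624000}, whereas you unpack it into an explicit kernel computation for the matrix~$M$ and thereby also obtain directly the identification $\ZZ(A)^{(3)} = \field[x^3,y^3,z^3]$ that the paper leaves implicit.
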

\begin{proof}
  By inspection the elements $x^3,y^3,z^3$ are generators of $\ZZ(A)$ of degree $3$,
  which implies that the greatest common denominator of the degrees of the generators is either $1$ or $3$.
  As in \cite[\S 3]{MR1624000} we take a primitive $3$rd root of unity $\zeta \in \field^*$ and present
  \begin{equation}
    q_{1,2} = \zeta^{-c},\quad
    q_{1,3} = \zeta^b,\quad
    q_{2,3} = \zeta^{-a},
  \end{equation}
  for some $1\leq a,b,c \leq 3$.
  One checks that the condition \eqref{equation:kanazawa-independence-restated} translates to $a+b+c \equiv 0 \bmod 3$.
  Because this condition fails by assumption,
  it follows that $a+b+c$ is not divisible by $3$,
  and it therefore follows by \cite[Corollary 3.4]{MR1624000}
  that the gcd of the generators of $\ZZ(A)$ is also not divisible by $3$.
  Hence $\ZZ(A)$ is generated in coprime degrees,
  and it follows from \Cref{lemma:veronese-construction} that the central Proj is given by
  \begin{equation}
    (\Proj \ZZ(A)^{(3)}, \order{A}^{(3)}) = (\Proj \field[x^3,y^3,z^3]^{(3)}, \order{A}^{(3)}),
  \end{equation}
  where the order $\widetilde{A}^{(3)}$ corresponds to $A^{(3)} \in \qgr \ZZ(A)^{(3)}$.
\end{proof}

To simplify the notation and avoid confusion, we write~$\ZZ(A)^{(3)} = \field[X,Y,Z]$
where $X$, $Y$, $Z$ are the degree-$1$ elements corresponding to $x^3$, $y^3$, $z^3$.

\begin{lemma}
  \label{lemma:nonkanazawa-ramification}
  The order ${\order{A}}^{(3)}$ has ramification index~$3$ on the three lines $\VV(XYZ) \subset \mathbb{P}^2$.
\end{lemma}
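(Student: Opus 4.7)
The plan is to compute $\order{A}^{(3)}$ explicitly on the standard affine chart $D(X) \subset \mathbb{P}^2$, identify the ramification there, and conclude on the remaining coordinate lines by the symmetry in $x,y,z$. By \cref{lem:non-kanazwa} and the definition of the central Proj,
\begin{equation*}
  \Gamma(D(X),\order{A}^{(3)}) = (A^{(3)}[X^{-1}])_0 = (A[x^{-1}])_0,
\end{equation*}
where the last equality uses that $X = x^3$ is central, so $x$ itself becomes invertible in the localisation. Setting $\alpha \colonequals yx^{-1}$ and $\beta \colonequals zx^{-1}$, a routine manipulation of the skew-commutation relations of $A$, using that each $q_{i,j}$ is a cube root of unity, gives
\begin{equation*}
  \alpha^3 = Y/X, \qquad \beta^3 = Z/X, \qquad \alpha\beta = p\,\beta\alpha,
\end{equation*}
where $p \colonequals q_{1,2} q_{2,3} q_{1,3}^{-1}$. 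Rearranging \eqref{equation:kanazawa-independence-restated} shows that it is equivalent to $q_{1,3} = q_{1,2}q_{2,3}$, i.e.\ to $p = 1$; by hypothesis $p$ is thus a primitive cube root of unity. The PBW basis of $A$ then identifies $\Gamma(D(X),\order{A}^{(3)})$ with the quantum polynomial ring $\field_p\{\alpha,\beta\}$, free of rank $9$ over its center $\field[\alpha^3,\beta^3] = \field[Y/X,Z/X] = \Gamma(D(X),\mathcal{O}_{\mathbb{P}^2})$.

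Next I would locate the ramification on $D(X)$. Writing a closed point of $D(X) = \Spec\field[Y/X,Z/X]$ as $(u_0,v_0)$, the fiber of $\order{A}^{(3)}$ there is the $9$-dimensional quotient
\begin{equation*}
  \field\{\alpha,\beta\}/(\alpha\beta - p\beta\alpha,\ \alpha^3 - u_0,\ \beta^3 - v_0).
\end{equation*}
When $u_0 v_0 \neq 0$ this is a split cyclic algebra of degree $3$ over $\field$, hence isomorphic to $\Mat_3(\field)$; when $u_0 = 0$ (respectively $v_0 = 0$) the generator $\alpha$ (respectively $\beta$) is nilpotent, so the fiber is not semisimple. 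By the fiberwise criterion for Azumaya algebras \cite[Proposition IV.2.1(b)]{MR559531}, the ramification locus on $D(X)$ is precisely $\VV(YZ) \cap D(X)$.

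For the ramification index along $\VV(Y)$, I would localise at its generic point: writing $R \colonequals \field[Y/X,Z/X]_{(Y/X)}$ and $\Lambda \colonequals \field_p\{\alpha,\beta\} \otimes_{\field[Y/X,Z/X]} R$, the relation $\alpha\beta = p\beta\alpha$ makes $\Lambda\alpha$ a two-sided ideal of $\Lambda$ satisfying $(\Lambda\alpha)^3 = \Lambda\alpha^3 = (Y/X)\Lambda$. The quotient $\Lambda/\Lambda\alpha \cong R[\beta]/(\beta^3 - Z/X)$ is finite \'etale and in particular semisimple over $R$, since $Z/X$ is a unit in $R$ and $\field$ contains a primitive cube root of unity; hence $\Lambda\alpha \supseteq \rad\Lambda$. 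Conversely, $\alpha^3 = Y/X$ lies in the maximal ideal of $R$, so $\alpha$ is nilpotent modulo $\rad\Lambda$ and therefore $\alpha \in \rad\Lambda$. Together this gives $\rad\Lambda = \Lambda\alpha$ and ramification index $3$ along $\VV(Y)$; swapping the roles of $\alpha$ and $\beta$ yields the same along $\VV(Z)$.

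The whole analysis is symmetric under permuting $x,y,z$, which replaces $p$ by $p^{\pm 1}$ on the charts $D(Y)$ and $D(Z)$; these remain non-trivial cube roots of unity, so identical arguments establish ramification index $3$ along $\VV(X)$ as well. The main obstacle is the explicit calculation in the first paragraph---the PBW identities $\alpha^3 = Y/X$ and $\alpha\beta = p\beta\alpha$, and the rearrangement of \eqref{equation:kanazawa-independence-restated} to $p \neq 1$; once these are in hand, the fiberwise and radical steps are essentially formal.
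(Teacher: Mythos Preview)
Your approach is correct in outline and somewhat cleaner than the paper's. The paper works directly with the nine module generators $x^iy^jz^k/X^{(i+j+k)/3}$ of $A^{(3)}_{(X)}$ and computes the radical by hand on the chart $\{XY\neq 0\}$; you instead recognise each affine chart as a quantum plane $\field_p\{\alpha,\beta\}$ in two generators, which makes both the Azumaya-locus computation (via fibres, rather than by inverting generators) and the radical computation more transparent. The rearrangement of \eqref{equation:kanazawa-independence-restated} into $p\neq 1$ is also a nice simplification.

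There is, however, one genuine slip in the radical step. Your identification $\Lambda/\Lambda\alpha \cong R[\beta]/(\beta^3 - Z/X)$ is incorrect: since $\alpha^3 = Y/X$ already lies in $\Lambda\alpha$, the uniformiser $\pi = Y/X$ is killed in the quotient, and one actually obtains
\[
  \Lambda/\Lambda\alpha \;\cong\; \kappa[\beta]/(\beta^3 - \overline{Z/X}),
\]
where $\kappa = R/\pi R \cong \field(Z/X)$ is the residue field. This matters for the logic: your stated quotient $R[\beta]/(\beta^3 - Z/X)$ is a product of discrete valuation rings, hence \emph{not} semisimple, and ``finite \'etale over $R$'' does not imply semisimplicity. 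The corrected quotient, by contrast, is a separable cubic $\kappa$-algebra---in fact a field, since $Z/X$ has no cube root in $\field(Z/X)$---so it is genuinely semisimple and the desired inclusion $\rad\Lambda \subset \Lambda\alpha$ follows. With this correction your argument goes through.
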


\begin{proof}
  It is straightforward to see that $A^{(3)}$ is generated as a module over $\field[X,Y,Z]$
  by monomials $x^iy^jz^k$ with $i,j,k < 3$ and $i+j+k$ divisible by $3$,
  which are the following $9$ elements:
  \begin{equation}
    1,\quad
    x^2y,\quad xy^2,\quad x^2z,\quad xz^2,\quad y^2z,\quad yz^2
    ,\quad xyz,\quad x^2y^2z^2.
  \end{equation}
  Therefore, if we restrict the order ${\order{A}}$ to the complement $U = \mathbb{P}^2 \setminus \VV(XYZ)$
  we obtain the algebra $\Lambda = A^{(3)}_{(XYZ)}$ generated over $R = \field[(\frac YX)^{\pm},(\frac ZX)^{\pm}]$
  by the elements
  \begin{equation}
    \label{eq:generators-three-veronese}
    1,\quad
    \frac{x^2y}{X},\quad
    \frac{xy^2}{X},\quad
    \frac{x^2z}{X},\quad
    \frac{xz^2}{X},\quad
    \frac{y^2z}{X},\quad
    \frac{yz^2}{X},\quad
    \frac{xyz}{X},\quad
    \frac{x^2y^2z^2}{X^2}.
  \end{equation}
  Each of these generators is invertible:
  for example $\frac{x^2y}{X} \cdot \frac{xy^2}{X} = q_{1,2}^{-1}\frac{Y}{X}$
  and $\frac{xyz}{X} \cdot \frac{x^2y^2z^2}{X^2} = q_{1,2}q_{2,3}q_{1,3} \frac{YZ}X$ are units in $R$,
  and the other cases can be checked similarly.
  Hence if ${\mathfrak{p}} \subset R$ is a prime of codimension $1$
  then the localization $\Lambda_{\mathfrak{p}}$
  has radical $\rad \Lambda_{\mathfrak{p}} = {\mathfrak{p}} \Lambda_{\mathfrak{p}}$
  and ${\order{A}}$ is therefore unramified on $U$.

  Now we check that the order has ramification index~$3$ on each line.
  By symmetry it suffices to consider the line $Z=0$ on the open chart $U = \mathbb{P}^2 \setminus \VV(XY)$.

  The restriction of ${\order{A}}$ to $\{XY\neq 0\}$
  is the algebra $\Lambda = A^{(3)}_{(XY)}$,
  generated as a module over $R=\field[(\frac YX)^\pm,\frac ZX]$
  by the same elements \eqref{eq:generators-three-veronese}.
  We claim that the localization at ${\mathfrak{p}} = (\frac ZX)$
  has radical given by the two-sided ideal
  \begin{equation}
    \rad \Lambda_{\mathfrak{p}} = {\mathfrak{p}}\Lambda_{\mathfrak{p}} + R_{\mathfrak{p}}\left\{\frac{x^2z}{X},\frac{xz^2}{X},\frac{y^2z}{X},\frac{yz^2}{X},\frac{xyz}{X}, \frac{x^2y^2z^2}{X^2}\right\}.
  \end{equation}
  Indeed,
  the generators $1$, $\frac{x^2y}{X}$, $\frac{xy^2}{X}$ are already invertible in $\Lambda$,
  and if $g$ is any other generator in \eqref{eq:generators-three-veronese},
  then~$g^3$ is divisible by $Z=z^3$ and therefore lies in $\rad \Lambda_{\mathfrak{p}}$.
  Moreover, any product $g_1g_2g_3$ of three such generators is divisible by $Z=z^3$, which shows that
  \begin{equation}
    (\rad \Lambda_{\mathfrak{p}})^3 \subset {\mathfrak{p}} \Lambda_{\mathfrak{p}}.
  \end{equation}
  In fact this is an equality, as,
  e.g., $(\frac{xyz}{X})^{3} = \frac{YZ}{X^2}$ is a generator for ${\mathfrak{p}}\Lambda_{\mathfrak{p}}$.
  It follows that $\Lambda$ has ramification index~$3$ on the line $Z=0$, which shows the result.
\end{proof}

\begin{proof}[Proof of \cref{proposition:kanazawa-fermat-cubic-2}]
  Let $B = A/(f)$ where $f=ax^3+by^3+cz^3$ is a noncommutative cubic.
  Then it follows from \Cref{proposition:restriction-central-proj} that there is an equivalence of categories
  \begin{equation}
    \qgr B \simeq \coh(C,\order{B}),
  \end{equation}
  where $C = \VV(aX+bY+cZ) \subset \mathbb{P}^2$ and $\order{B} = \order{A}|_C$.
  Because $\order{A}$ is a tame order, it follows by \cref{theorem:restriction} that
  \begin{equation}
    \coh(C,\order{B}) \simeq \coh(\stack{C}_\rt,\order{B}_\rt) \simeq \coh(\stack{C}_\can,\order{B}_\can),
  \end{equation}
  where $\stack{C}_\rt,$ and $\stack{C}_\can$ are the restriction of the root stack and canonical stack of $\order{A}$ to $C$.
  The first part of the statement therefore follows taking $\stack{C} = \stack{C}_\can$.

  The precise form of this stack depends on the intersection of $C$ with the ramification locus $\Delta$,
  which is of the form $\Delta = \VV(XYZ)$ by \Cref{lemma:nonkanazawa-ramification}.
  We make do a case-by-case analysis.

  Case~(1). If $abc\neq 0$ then the line $C=\VV(aX+bY+cZ)$ meets $\Delta = \VV(XYZ)$ transversely in three points,
  and we can choose an isomorphism~$C\cong \mathbb{P}^1$ so that $C\cap\Delta$ maps to $\{0,1,\infty\}$.
  By \Cref{proposition:restriction-hereditary}\cref{enumerate:restriction-hereditary-root-stack} we therefore obtain the root stack
  \begin{equation}
    \stack{C} = \stack{C}_\can \cong \stack{C}_\rt \cong \sqrt[3]{\mathbb{P}^1; 0+1+\infty}.
  \end{equation}

  Case~(2). If $abc = 0$ but only one of $a,b,c$ is $0$
  then $C=\VV(aX+bY+cZ)$ does not lie in $\Delta$,
  and $\stack{C}_\can \to C$ is an isomorphism outside the two intersection points $C\cap \Delta = \{p_1,p_2\}$,
  where $p_1 \not\in C\cap \Delta_\sing$ and $p_2 \in C\cap \Delta_\sing$.
  By symmetry we may assume that $c=0$.

  By \cref{proposition:r-c-isomorphism}
  the map $\stack{C}_\can \to \stack{C}_\rt$ is an isomorphism over
  the locus~$C\setminus \{p_2\} \cong \mathbb{A}^1$,
  and is therefore (locally) given by the root stack
  \begin{equation}
    \stack{C}_\rt \times_C (C\setminus\{p_2\}) \cong \sqrt[3]{C\setminus\{p_2\}; p_1} \cong \sqrt[3]{\mathbb{A}^1; 0}.
  \end{equation}
  and that $p_2$ is the unique singular point of $\Delta = \VV(XYZ)$ on the chart $\mathbb{P}^2|_{Z\neq 0}$.
  Let $u = \frac{X}{Z}$ and $v=\frac{Y}{Z}$ denote the coordinates on this chart,
  so that the ramification locus is $\VV(uv)\subset\mathbb{A}_{u,v}^2$.
  It follows by \cref{lemma:center-as-scaled-rees-ring,corollary:root-stack-local} that $\stack{C}_\rt$ is given on this chart by
  \begin{equation}
    \stack{S}_\rt \times_{\mathbb{P}^2} \mathbb{P}^2|_{Z\neq 0} \cong \left[\Spec\frac{\field[u,v,t]}{(t^3 - uv)}/\mu_3 \right].
  \end{equation}
  The stack on the right-hand side has an \'etale cover by $\Spec \field[r,s]^{\mu_3}$ where $\mu_3$ acts via $r \mapsto \zeta_3 \cdot r$ and $s \mapsto \zeta_3^2 \cdot s$ for $\zeta_3$ a primitive $3$rd root of unity:
  the map is induced by the ring isomorphism
  \begin{equation}
    \label{equation:invariant-ring-cubic}
    \frac{\field[u,v,t]}{(t^3 - uv)} \xrightarrow{\ \sim\ } \field[r,s]^{\mu_3},\quad
    u\mapsto r^3,\quad
    v\mapsto s^3,\quad
    t\mapsto rs.
  \end{equation}
  The canonical stack can then be described over this same locus via the \'etale cover
  \begin{equation}
    \label{equation:etale-cover-cubic}
    \left[\Spec \field[r,s]/\mu_3 \right] \to \stack{S}_\rt \times_{\mathbb{P}^2} \mathbb{P}^2|_{Z\neq 0}.
  \end{equation}
  In the coordinates $u,v$ the restriction of $C$ to the chart is given by $\VV(au + bv)$,
  so after restricting the canonical stack
  we obtain an \'etale cover of $\stack{C}_\can \times_C (C\setminus \{p_1\})$ of the form
  \begin{equation}
    \left[\Spec \frac{\field[r,s]}{(ar^3+bs^3)}/\mu_3 \right] \longrightarrow \stack{C}_\can \times_C (C\setminus \{p_1\}).
  \end{equation}
  Thus~$\stack{C}_\can$ is singular around the point~$p_2$,
  being the quotient of three concurrent lines.

  Case~(3) If two of $a,b,c$ are $0$, then by symmetry we may assume that $b=c=0$,
  so that $C = \VV(X)$ is a component of the ramification divisor~$\Delta$.
  Writing $U = \mathbb{P}^2|_{Z\neq 0} \cong \Spec \field[u,v]$ with coordinates as above,
  the canonical stack $\stack{S}_\can$ is again described by the \'etale cover in \eqref{equation:etale-cover-cubic}.
  The curve $C$ is cut out by the equation~$u = 0$ on the chart $U$,
  which maps $u\mapsto r^3$ under the isomorphism \eqref{equation:invariant-ring-cubic}.
  The stack $\stack{C}_\can$ can therefore be described by the \'etale cover
  \begin{equation}
    \left[\Spec \frac{\field[r,s]}{(r^3)} / \mu_3\right] \longrightarrow \stack{C}_\can \times_{\mathbb{P}^2} U,
  \end{equation}
  Zariski-locally exhibiting~$\stack{C}_\can$ as
  a non-reduced quotient stack,
  with generic stabiliser~$\mu_3$,
  and special behavior at~$r=0$.
  The description of the other chart $Y\neq 0$ is identical.
\end{proof}

The stacky curve~$\stack{C}$ is a weighted projective line of type~$(3,3,3)$,
which is called \emph{tubular} \cite[\S5.4.2]{MR915180}.
It is one of the (rigid) classes of weighted projective lines with Euler characteristic~0,
thus it shares some properties with cubic curves,
but it is only fractionally Calabi--Yau of dimension~$3/3$.

It is clear from the examples in \cref{section:noncommutative-conics,section:noncommutative-cubics}
that one can study many other examples of central curves on noncommutative surfaces
using these methods.
We illustrate this by giving~2~more examples.

\begin{example}
  \label{example:Clifford-degree-4}
  Consider a graded Clifford algebra~$A$
  as in \cref{section:noncommutative-conics},
  where we will freely use the results obtained in that section.
  Let~$f\in\ZZ(A)_4$
  be a central element of degree~4.
  The space of such elements is spanned by~$x^4,y^4,z^4,x^2y^2,x^2z^2,y^2z^2$,
  and~$f$ defines a conic~$C$ in the projective plane underlying the central Proj.
  Assume that~$C$ is smooth,
  and that~$C\cap\Delta$ is a transverse intersection
  in~6~distinct points.
  Then by \cref{proposition:restriction-hereditary}
  we have an equivalence
  \begin{equation}
    \qgr A/(f)
    \simeq
    \coh\sqrt[2]{\mathbb{P}^1;C\cap\Delta},
  \end{equation}
  where on the right-hand side
  we have a weighted projective line of type~$(2,2,2,2,2,2)$.
  By varying~$f$ we can vary the location of the points
  and obtain a non-isotrivial family of weighted projective lines,
  because we can only choose coordinates to fix~3~of the points.
  This contrasts with the isotrivial family of tubular weighted projective line
  earlier in this subsection.
\end{example}

For a non-isotrivial family of tubular weighted projective lines
arising from our construction,
we turn our attention towards an exotic del Pezzo order on~$\mathbb{P}^2$.
Unlike the previous example,
it does not arise from a central Proj construction
applied to a quadratic Artin--Schelter regular algebra.

\begin{example}
  \label{example:exotic-del-pezzo-order}
  Let~$\order{A}$ be a quaternion order on~$\mathbb{P}^2$
  ramified along a smooth quartic~$\Delta$.
  See, e.g., \cite[\S6]{MR2183391}
  for an explicit construction of such an order using
  the noncommutative cyclic covering trick.
  Let~$L\subset\mathbb{P}^2$ be a line
  such that~$L\cap\Delta$ is a transverse intersection
  in~4~distinct points.
  Then by \cref{proposition:restriction-hereditary}
  we have an equivalence
  \begin{equation}
    \qgr A/(f)
    \simeq
    \coh\sqrt[2]{\mathbb{P}^1;C\cap\Delta},
  \end{equation}
  where on the right-hand side
  we have a weighted projective line of type~$(2,2,2,2)$.
  By varying~$L$ we can vary the location of the points as in the previous example,
  and we obtain a non-isotrivial family of \emph{tubular} weighted projective lines.
  They are fractionally Calabi--Yau of dimension~$2/2$.
\end{example}

\appendix

\section{Generalizing a standard result to stacks}
\label{appendix:lemmas}
We consider a stacky version of the noncommutative schemes considered in \cite{MR4554471}.

\begin{definition}
  A \emph{noncommutative Deligne--Mumford stack} is a pair~$(X,\order{R}_X)$
  of a Deligne--Mumford stack~$X$
  equipped with a quasi-coherent sheaf~$\order{R}_X \in \Qcoh(X)$
  endowed with an algebra structure~$m_X\colon \order{R}_X \otimes \order{R}_X \to \order{R}_X$.
  A morphism between noncommutative stacks~$(X,\order{R}_X)$ and~$(Y,\order{R}_Y)$ is a pair~$\mathbf{f} = (f,f^\sharp)$ of morphisms
  \begin{equation}
    f\colon X \to Y,\quad f^\sharp\colon f^*\order{R}_Y \to \order{R}_X,
  \end{equation}
  with~$f$ a morphism of stacks, and~$f^\sharp$ a morphism of algebra objects in~$\Qcoh(X)$.
\end{definition}

In what follows we will always assume that the all sheaves of algebras are coherent.
With this assumption, any morphism~$\mathbf{f} = (f,f^\sharp) \colon (X,\order{R}_X) \to (Y,\order{R}_Y)$
with~$f$ proper defines pushforward and pullback morphisms
\begin{equation}
  \mathbf{f}_* \colon \coh(X,\order{R}_X) \to \coh(Y,\order{R}_Y),\quad \mathbf{f}^*\colon \coh(Y,\order{R}_Y) \to \coh(X,\order{R}_X),
\end{equation}
where for~$\mathcal{M}\in \coh(X,\order{R}_X)$ the module~$\mathbf{f}_*\mathcal{M}$
is given by~$f_*\mathcal{M}$ endowed with a~$\order{R}_Y$-module structure via
\begin{equation}
  f_*\mathcal{M} \otimes_Y \order{R}_Y
  \to
  f_*(\mathcal{M} \otimes_Y f^*\order{R}_Y) \xrightarrow{f_*(\id_\mathcal{M} \otimes_Y f^\sharp)} f_*(\mathcal{M} \otimes_Y \order{R}_X) \xrightarrow{f_*m_X} f_*\mathcal{M},
\end{equation}
and the pullback of~$\mathcal{N}\in\coh(Y,\order{R}_Y)$
is given by the right~$\order{R}_X$-module~$\mathbf{f}^*\mathcal{N} \colonequals f^*\mathcal{N} \otimes_{f^*\order{R}_Y} \order{R}_X$.
These functors form an adjoint pair as in \cite{MR4554471}.

Given a noncommutative stack~$(X,\order{R}_X)$ and a morphism~$f\colon X\to Y$,
there is an induced noncommutative stack structure on~$Y$ given by~$f_*\order{R}_X$
and a natural map~$\mathbf{f} \colon (X,\order{R}_X) \to (Y,f_*\order{R}_X)$ defined by the counit
\begin{equation}
  f^\sharp \colon f^*f_*\order{R}_X \to \order{R}_X,
\end{equation}
of the adjunction between~$f_*$ and~$f^*$.
Likewise, if~$(Y,\order{R}_Y)$ is a noncommutative stack and~$f\colon X\to Y$ a morphism,
there is an induced noncommutative stack structure on~$X$ defined by~$f^*\order{R}_Y$, and
\begin{equation}
  \mathbf{f} = (f,\id_{f^*\order{R}_Y}) \colon (X,f^*\order{R}_Y) \to (Y,\order{R}_Y)
\end{equation}
is a well-defined map of noncommutative stacks.

\begin{lemma}
  \label{lemma:embedding-as-torsion}
  Let~$(X,\order{R})$ be a noncommutative stack, and~$i\colon V \to X$ a closed immersion.
  Then
  \begin{equation}
    \mathbf{i}_*\colon \coh(V,i^*\order{R}) \hookrightarrow \coh(X,\order{R})
  \end{equation}
  embeds~$\coh(V,i^*\order{R})$ as the full subcategory of modules whose underlying sheaf lies in~$i_*\coh(V)$.
\end{lemma}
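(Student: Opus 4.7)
The plan is to reduce the assertion to the corresponding (standard) statement for closed immersions of Deligne--Mumford stacks in the absence of sheaves of algebras, and then transport the module structures through the adjunction $\mathbf{i}^*\dashv\mathbf{i}_*$ from the preamble.

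First, I would recall that for a closed immersion $i\colon V\to X$ of Deligne--Mumford stacks, the ordinary pushforward $i_*\colon\coh V\to\coh X$ is fully faithful and identifies $\coh V$ with the full subcategory of $\coh X$ consisting of sheaves annihilated by the ideal sheaf $\mathcal{I}_V\subset\mathcal{O}_X$. This is standard and can be verified on an \'etale atlas, where it reduces to the elementary fact that $R/I$-modules are exactly the $R$-modules killed by $I$; in particular the counit $i^*i_*\cN\xrightarrow{\sim}\cN$ is an isomorphism for every $\cN\in\coh V$. Since $\order{R}$ is coherent, the condition $\mathcal{I}_V\cdot\cM=0$ on an $\order{R}$-module $\cM$ depends only on the underlying sheaf, so the full subcategory to identify with $\coh(V,i^*\order{R})$ is the one whose underlying sheaf lies in $i_*\coh V$, as stated.

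Next, fully faithfulness of $\mathbf{i}_*$ is formal. For $\cN_1,\cN_2\in\coh(V,i^*\order{R})$ the adjunction gives
\begin{equation*}
\Hom_{\order{R}}(\mathbf{i}_*\cN_1,\mathbf{i}_*\cN_2)\cong\Hom_{i^*\order{R}}(\mathbf{i}^*\mathbf{i}_*\cN_1,\cN_2),
\end{equation*}
and $\mathbf{i}^*\mathbf{i}_*\cN_1=i^*i_*\cN_1\otimes_{i^*\order{R}}i^*\order{R}\cong\cN_1$ by the first step, so this Hom is $\Hom_{i^*\order{R}}(\cN_1,\cN_2)$. For essential surjectivity, I would start from $\cM\in\coh(X,\order{R})$ whose underlying sheaf is of the form $i_*\cN$ for a (unique up to unique isomorphism) $\cN\in\coh V$, and transport the $\order{R}$-action $m\colon i_*\cN\otimes_X\order{R}\to i_*\cN$ through the projection formula
\begin{equation*}
i_*\cN\otimes_X\order{R}\cong i_*(\cN\otimes_V i^*\order{R})
\end{equation*}
together with the fullness of $i_*$ to produce a map $\tilde m\colon\cN\otimes_V i^*\order{R}\to\cN$. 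I would then verify that $\tilde m$ is an associative, unital $i^*\order{R}$-action by pulling back the corresponding diagrams for $m$ through $i_*$, and check from the explicit formula for $\mathbf{i}_*$ in the preamble that $\mathbf{i}_*(\cN,\tilde m)\cong\cM$ as $\order{R}$-modules.

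The main obstacle is ensuring the projection formula for a closed immersion of Deligne--Mumford stacks with a coherent algebra coefficient; this is standard (closed immersions being affine), but requires either an \'etale-local check or invocation as a black box. Once available, everything else is a routine diagram chase exploiting that $i_*$ is fully faithful and that its essential image is closed under tensoring with $\order{R}$.
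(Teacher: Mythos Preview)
Your proposal is correct and follows essentially the same strategy as the paper: reduce to the fact that the counit $i^*i_*\to\id$ is a natural isomorphism for a closed immersion of Deligne--Mumford stacks (checked on an atlas), then use the adjunction $\mathbf{i}^*\dashv\mathbf{i}_*$ to deduce the result. The only noteworthy difference is in the essential surjectivity step. You transport the $\order{R}$-action on $\cM$ through the projection formula $i_*\cN\otimes_X\order{R}\cong i_*(\cN\otimes_V i^*\order{R})$ and then verify the module axioms by hand; the paper instead observes that if $\cM$ has underlying sheaf $i_*\mathcal{F}$ then the unit $\cM\to\mathbf{i}_*\mathbf{i}^*\cM$ is already a map of $\order{R}$-modules whose underlying map of sheaves is $i_*\mathcal{F}\to i_*i^*i_*\mathcal{F}\cong i_*\mathcal{F}$, hence an isomorphism. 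The paper's route is a bit slicker since it avoids invoking the projection formula and the diagram chase altogether, but your argument is equally valid and perhaps more transparent about where the module structure on $\cN$ actually comes from.
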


\begin{proof}
  Since~$i\colon V\to X$ is a closed embedding of stacks, we claim that the counit~$i^*i_*\to \id$ of the adjunction is a natural isomorphism.
  To see this, we can pull back~$i$ to a morphism~$i'\colon V\times_X U \to U$, where~$U\to X$ is an atlas.
  By \cite[Lemma 04XC]{stacks} the map~$i'$ is a closed immersion of algebraic spaces, and~$(i')^*(i')_*\to \id$ is a natural isomorphism by standard results \cite[Lemma 04CJ]{stacks}.
  Then for any sheaf~$\mathcal{F}$ on~$V$ the morphism~$i^*i_*\mathcal{F} \to \mathcal{F}$ pulls back to the isomorphism~$(i')^*(i')_*\mathcal{F} \to \mathcal{F}$, hence~$i^*i_*\mathcal{F} \to \mathcal{F}$ is also an isomorphism by \cite[Lemma 0GQF]{stacks}.

  For the morphism~$\mathbf{i}$, we note that the left adjoint to~$\mathbf{i}_*$ is simply given by
  \begin{equation}
    \mathbf{i}^*(-) = i^*(-) \otimes_{i^*\order{R}} i^*\order{R} \cong i^*(-),
  \end{equation}
  which agrees with the left adjoint of~$i_*\colon \coh(S) \to \coh(T)$ on the underlying sheaves.
  In particular, for any~$\mathcal{M}\in\coh(V,i^*\order{R})$ the natural map
  \begin{equation}
    \mathbf{i}^*\mathbf{i}_*\mathcal{M} = i^*i_*\mathcal{M} \xrightarrow{\sim} \mathcal{M}
  \end{equation}
  is an isomorphism.
  Hence, the transformation~$\mathbf{i}^*\mathbf{i}_* \to \id$ is a natural isomorphism and hence~$\mathbf{i}_*$ is fully faithful.

  Finally, it is clear by construction that the objects in the image of~$\mathbf{i}_*$ have underlying sheaf in the image of~$i_*$.
  Conversely, if~$\mathcal{M}\in\coh(V,i^*\order{R})$ is a module with underlying sheaf~$i_*\mathcal{F}$ then the natural map~$\mathcal{M} \to \mathbf{i}_*\mathbf{i}^*\mathcal{M} = i_*i^*i_*\mathcal{F} \cong \mathcal{M}$ is an isomorphism, so~$\mathcal{M}$ is the image of~$\mathbf{i}^*\mathcal{M}$.
\end{proof}


\renewcommand*{\bibfont}{\small}
\printbibliography

\emph{Thilo Baumann}, \url{thilo.baumann@uni.lu} \\
Department of Mathematics, Universit\'e de Luxembourg, 6, avenue de la Fonte, L-4364 Esch-sur-Alzette, Luxembourg

\emph{Pieter Belmans}, \url{pieter.belmans@uni.lu} \\
Department of Mathematics, Universit\'e de Luxembourg, 6, avenue de la Fonte, L-4364 Esch-sur-Alzette, Luxembourg

\emph{Okke van Garderen}, \url{okke.vangarderen@sissa.it} \\
Scuola Internazionale Superiore di Studi Avanzati (SISSA), via Bonomea, 265, 34136 Trieste, Italy

\end{document}